\Crefname{figure}{Figure}{Figures}
\crefname{section}{section}{sections}
\tikzset{normalnode/.style={circle, draw, fill=black, inner sep=0, minimum width=1.5mm}}
\newcommand{\iso}{\cong}
\newcommand{\cay}{\text{Cay}}
\newcommand{\card}[1]{|{#1}|}
\DeclareMathOperator*{\biglor}{\bigvee}
\newcommand{\Nn}{\mathbb{N}}
\newcommand{\Zz}{\mathbb{Z}}
\newcommand{\Pc}{\mathcal{P}}
\newcommand{\Xc}{\mathcal{X}}
\newcommand{\Yc}{\mathcal{Y}}
\newcommand{\Zc}{\mathcal{Z}}
\newcommand{\Rc}{\mathcal{R}}
\newcommand{\Cc}{\mathcal{C}}
\newtheorem{theorem}{Theorem}[section]
\newtheorem{proposition}[theorem]{Proposition}
\newtheorem{corollary}[theorem]{Corollary}
\newtheorem{conjecture}[theorem]{Conjecture}
\newtheorem{lemma}[theorem]{Lemma}
\theoremstyle{definition}
\theoremstyle{remark}
\newtheorem{remark}[theorem]{Remark}
\newtheorem{claim}[theorem]{Claim}
\DeclareMathOperator{\Aut}{Aut}
\DeclareMathOperator{\Perm}{\mathfrak{S}}
\DeclareMathOperator{\tww}{tww}
\DeclareMathOperator{\stww}{stww}
\DeclareMathOperator{\utww}{utww}
\DeclareMathOperator{\gv}{gn}  % grid value
\DeclareMathOperator{\qn}{qn}
\DeclareMathOperator{\sqn}{sqn}  %strict
\DeclareMathOperator{\uqn}{uqn} % uniform
\newcommand{\diam}{\textsf{diam}}
\newcommand{\girth}{\textsf{girth}}
\newcommand{\eqdef}{\stackrel{\text{def}}{=}}
\newcommand{\setst}[2]{\textstyle \left\{#1\ |\ #2\right\}}
\newcommand{\bij}[1]{M_{#1}} % matrix of a bijection
\newcommand{\adj}[2]{A_{#1}(#2)} % new suggestion for adjacency matrix
\newcommand{\grppres}[2]{\left\langle #1 ; #2 \right\rangle}
\newcommand{\symdiff}{\triangle}
\renewcommand{\le}{\leqslant}
\renewcommand{\ge}{\geqslant}
\title{Twin-width VII: Groups}
\author[É.~Bonnet]{Édouard Bonnet}
\author[C.~Geniet]{Colin Geniet}
\author[R.~Tessera]{Romain Tessera}
\author[S.~Thomassé]{Stéphan Thomassé}
\newcommand{\lipaddr}{Laboratoire de l'Informatique du Parallélisme, ENS de Lyon, 46 allée d’Italie, 69364 Lyon CEDEX 07, France}
\address{\lipaddr}
\email{edouard.bonnet@ens-lyon.fr}
\address{\lipaddr}
\email{colin.geniet@ens-lyon.fr}
\address{Institut de Mathématiques de Jussieu-Paris Rive Gauche, Place Aurélie Nemours, 75013 Paris, France}
\email{romain.tessera@imj-prg.fr}
\address{\lipaddr}
\email{stephan.thomasse@ens-lyon.fr}
\thanks{%
  The first, second, and fourth authors were partially supported by the ANR grant TWIN-WIDTH (ANR-21-CE48-0014-01)}
\subjclass[2020]{Primary 05C25; Secondary 20F65, 05C30}
\keywords{Twin-width, Cayley graphs, coarse embeddings, excluded permutations, small classes, small cancellation}
\begin{document}
\begin{abstract}
  Twin-width is a recently introduced graph parameter with applications in algorithmics, combinatorics, and finite model theory.
  For graphs of bounded degree, finiteness of twin-width is preserved by quasi-isometry.
  Thus, through Cayley graphs, it defines a group invariant.

  We prove that groups which are abelian, hyperbolic, ordered, solvable, or with polynomial growth, have finite twin-width.
  Twin-width can be characterised by excluding patterns in the self-action by product of the group elements.
  Based on this characterisation, we propose a strengthening called \emph{uniform twin-width},
  which is stable under constructions such as group extensions, direct products, and direct limits.

  The existence of finitely generated groups with infinite twin-width is not immediate.
  We construct one using a result of Osajda on embeddings of graphs into groups.
  This implies the existence of a class of finite graphs with unbounded twin-width
  but containing~$2^{O(n)} \cdot n!$ graphs on vertex set~$\{1,\dots,n\}$,
  settling a question asked in a previous work.
\end{abstract}
\maketitle

\section{Introduction}
Twin-width is an invariant of graphs and matrices introduced in~\cite{twin-width1},
with applications in algorithmics, logic, enumerative combinatorics~\cite{twin-width2,twin-width3,twin-width4}.
Finite twin-width is known to be a group invariant of Cayley graphs~\cite[Section~7]{twin-width2}.
Our main result is the construction of groups with infinite twin-width,
using an embedding theorem of Osajda~\cite{osajda2020small}.
\begin{restatable}{theorem}{infinitetww}
  \label{thm:infinite-tww}
  There is a finitely generated group with infinite twin-width.
\end{restatable}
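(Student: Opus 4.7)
The plan is to combine Osajda's small-cancellation embedding of graphs into groups with the quasi-isometry invariance of twin-width for bounded-degree graphs (highlighted in the abstract), applied to a sequence of finite graphs whose twin-widths diverge. The group produced will then inherit infinite twin-width from the graphs it isometrically contains in its Cayley graph.

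First, I would exhibit a sequence $(H_n)_{n \ge 1}$ of finite, connected, $d$-regular graphs (for some fixed $d$) whose twin-widths tend to infinity and which meet the hypotheses of Osajda's embedding theorem, typically a girth condition and a small-cancellation or isoperimetric constraint. A family of $d$-regular expanders is a natural candidate, possibly after a girth-amplification argument, since such expanders are known to have unbounded twin-width. The key point at this stage is to simultaneously achieve unboundedness of $\tww(H_n)$ and the geometric regularity that Osajda's machinery needs.

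Second, I would invoke the main result of~\cite{osajda2020small} to build a single finitely generated group $G$ with Cayley graph $\Gamma = \cay(G,S)$ that contains each $H_n$ as an \emph{isometrically embedded} subgraph, with uniform control on the embedding constants. In particular, the disjoint union $\bigsqcup_n H_n$ injects into $\Gamma$ in a way that preserves distances up to uniform constants and keeps the images far apart.

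Third, and this is the crux, I would derive a contradiction from the assumption $\tww(\Gamma) < \infty$. Because $\Gamma$ has bounded degree $|S|$, the quasi-isometry invariance of twin-width for bounded-degree graphs asserted in the abstract implies that any graph uniformly coarsely embedded in $\Gamma$ has twin-width bounded by a function of $\tww(\Gamma)$ and the embedding constants. Applied to each $H_n$, this would contradict $\tww(H_n) \to \infty$, so $\tww(\Gamma) = \infty$, and consequently $G$ has infinite twin-width.

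The main obstacle will be precisely this last step: an isometric embedding is strictly weaker than being an induced subgraph, and twin-width is not monotone under arbitrary subgraphs. The right formulation is to pass, for each $n$, to a bounded neighbourhood of the image of $H_n$ in $\Gamma$, view $H_n$ as quasi-isometric to that neighbourhood, and transport a twin-width bound across this quasi-isometry using the preservation theorem proved earlier in the paper. Getting the quantitative dependencies right, so that the bound obtained is uniform in $n$, is where the technical care lies.
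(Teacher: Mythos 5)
Your high-level plan matches the paper's: find a sequence of bounded-degree connected graphs with unbounded twin-width satisfying the degree, girth, and diameter hypotheses of Osajda's theorem, embed them isometrically into a Cayley graph via \cref{thm:graph-sequence-embedding}, and conclude. However, two points in your sketch need repair, one of which is a genuine gap.

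The construction of the graph sequence is the real technical content, and your sketch glosses over it with an incorrect claim. It is \emph{not} true that bounded-degree expanders have unbounded twin-width: the paper itself notes that Bilu--Linial expanders are 3-regular expanders of twin-width at most~6. What is true is that a \emph{random} cubic graph has high twin-width with high probability, by the counting argument of \cref{thm:tww-small} (a class of bounded twin-width contains at most~$n! \cdot c^n$ graphs on vertex set~$[n]$, while cubic graphs are far more numerous). The difficulty is that Osajda's theorem additionally requires a logarithmic \emph{lower} bound on girth, a logarithmic \emph{upper} bound on diameter, and increasing girth along the sequence, and one must show that the counting argument survives these restrictions. \Cref{sec:cubic-embedding} handles this precisely: \cref{lem:expectation-cycles,lem:editing} show that at least half of all random labelled cubic graphs on~$[n]$ can be edited into the class~$\Cc_2(n)$ (degree~$\le 6$, diameter~$\le 3\log n$, girth~$\ge \frac14\log n$) using only~$O(n^{7/8})$ edge modifications, and \cref{lem:cubic-girth-diam-not-small} then shows~$\Cc_2$ is not small. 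Your ``girth-amplification argument, possibly'' does not justify that unbounded twin-width persists; this is where the work lies.

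Your concern in the third step, by contrast, is a non-issue: you have the direction of the implication backwards. An isometric embedding preserves the path distance exactly, hence preserves both adjacency and non-adjacency, so the image of each~$G_k$ in~$\cay(\Gamma,S)$ is an \emph{induced} subgraph---which is stronger, not weaker, than being a subgraph. Twin-width of infinite graphs is by definition the supremum over finite induced subgraphs, so~$\stww(G_k) \le \stww(\cay(\Gamma,S))$ follows immediately. (Equivalently, an isometric embedding is a~$1$-regular embedding and \cref{lem:tww-regular-embedding} applies with~$\lambda=1$.) There is no need for bounded neighbourhoods or quasi-isometry transport; that detour is harmless but superfluous.
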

This disproves the `small conjecture' proposed in~\cite{twin-width2},
related to the Stanley-Wilf conjecture proved by Marcus and Tardos~\cite{MarcusT04}, see~\cref{sec:intro-growth} below.
It is also related to the problem of finding graphs with bounded degree and infinite twin-width, see~\cref{sec:intro-tww-bounded-degree}.

We study characterisations, variants, and stability properties of twin-width of groups,
and prove that a number of well-known classes of groups have finite twin-width, see~\cref{sec:intro-tww-groups}.
Twin-width of groups has an interesting relation to queue number, a more restrictive graph parameter:
a number of results of this paper also hold for the queue number, see~\cref{sec:intro-queue-number}.

\subsection{Growth of classes excluding substructures}
\label{sec:intro-growth}
Marcus and Tardos proved that any~$n \times n$ 0--1 matrix with more than~$c_k n$ 1-entries contains all~$k \times k$ permutation matrices as submatrices,
for an appropriate~$c_k$ function of~$k$ only~\cite{MarcusT04}.
A major corollary is the Stanley-Wilf conjecture: For any fixed permutation~$\sigma$,
the class of permutations avoiding~$\sigma$ as pattern has single-exponential growth---%
as opposed to the class of all permutations which has factorial growth.
Guillemot and Marx built on the result of Marcus and Tardos by giving a characterisation of classes of permutations which exclude some pattern~\cite{Guillemot14}:
they have bounded \emph{width}, which is defined by representing permutations with points in the plane,
iteratively \emph{contracting} these points down to a single one
while measuring a number of \emph{conflicts} throughout this contraction sequence,
and minimizing the number of conflicts over all choices of contraction sequence.

A natural generalisation of the Stanley-Wilf conjecture to arbitrary 0--1 matrices is the following:
Any class of matrices~$\Cc$ closed under taking submatrices has growth at most single-exponential or at least factorial.
This was established in~\cite{twin-width4}, by proving that classes of matrices with exponential growth
are exactly the classes with bounded \emph{twin-width}, a natural generalisation of Guillemot and Marx's width to graphs and matrices proposed in~\cite{twin-width1}.
Matrices with bounded twin-width are also characterised by excluding \emph{grids of high rank}, which can be seen as a generalisation of excluding patterns.

The logical next question in this line of work is the generalisation to graphs.
One may think of graphs as essentially the same as a 0--1 matrix, through adjacency matrices.
But a major difference is that matrices have an innate order, which is lost in graphs.
For graphs, the sensible equivalent of classes with exponential growth is \emph{small classes}:
classes of graphs which contain at most~$n! \cdot c^n$ graphs on vertices~$1,\dots,n$, for some constant $c$.
The additional $n!$ compensates for the choice of a permutation of the vertices.
Well known examples of small classes of graphs include trees, planar graphs,
and more generally proper minor-closed classes~\cite{Norine06}.
These examples are generalised by the following.
\begin{theorem}[\cite{twin-width2}]
  \label{thm:tww-small}
  Classes of graphs with bounded twin-width are small.
\end{theorem}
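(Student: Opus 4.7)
The plan is to construct a compact encoding of each graph of bounded twin-width and then count the resulting encodings.

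Fix a graph $G$ on $\{1,\dots,n\}$ with $\tww(G) \le d$, and select a contraction sequence $G = G_n, G_{n-1}, \dots, G_1$ witnessing this bound. Viewing the sequence in reverse as a sequence of splits, one organises the data into a rooted binary tree $T$ with $n$ leaves labelled by $\{1,\dots,n\}$: the leaves correspond to the original vertices of $G$, and each internal node corresponds to a trigraph vertex that will later split into two children.

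The key observation is that, given $T$ together with its leaf labelling, one can reconstruct $G$ from the following local data at each internal node $v$: when $v$ splits into its children $v_1, v_2$, we must specify, for every edge incident to $v$ in the current trigraph, how this edge is inherited by $v_1$ and $v_2$. A black edge $vw$ must split into two black edges and a non-edge must split into two non-edges, so these carry no information; only \emph{red} edges $vw$ admit several consistent splits (each of $v_1w$ and $v_2w$ can independently be black, red, or absent, subject to the constraint that the recontraction yields a red edge), contributing a constant number of choices per red neighbour. Since every trigraph in the sequence has red degree at most $d$, each internal node is described by at most $\alpha(d) = 2^{O(d)}$ bits. Multiplying the Catalan-type bound $C^n$ for rooted binary tree shapes on $n$ leaves, the $n!$ leaf labellings, and $\alpha(d)^{n-1}$ local choices across the internal nodes yields a total bound of $n! \cdot c(d)^n$ for an appropriate constant $c(d)$.

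The principal obstacle is to verify that this encoding is faithful: one must check that the local splitting data, combined with the tree structure and the leaf labelling, uniquely reconstructs the trigraph sequence $G_1, \dots, G_n$ and therefore $G$ itself; in particular that the choices made at an internal node $v$ and those made later at a neighbour $w$ do not interfere or double-count. A conceptually cleaner route, and arguably the one best suited to the paper, reduces the statement to the matrix analogue of Stanley--Wilf-type growth established in~\cite{twin-width4}: one argues that any contraction sequence for $G$ yields a linear order $\pi$ on $V(G)$ under which the adjacency matrix $M_\pi(G)$ has bounded matrix twin-width, then multiplies the single-exponential bound on the number of such matrices by the $n!$ possible choices of $\pi$.
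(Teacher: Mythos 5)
The paper cites this result from \cite{twin-width2} and does not reprove it, so there is no in-paper proof to compare against; I assess your argument on its own merits.

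The gap you flag in the tree-encoding route is genuine and not easily closed. The local split data you attach to an internal node $v$ is only meaningful relative to the trigraph that is current when $v$ is split, and that trigraph is determined by a linear extension of the contraction tree's ancestor partial order---information that the tree shape and leaf labelling alone do not capture, and which is too expensive to record explicitly (a tree can have up to $(n-1)!$ linear extensions, which would ruin the $n!\cdot c^n$ bound). Nor is reconstruction order-independent: if during reconstruction you expand $v$ at a point where a red neighbour $w$ from the original sequence has already been split into strictly finer parts, the recorded datum for $v$ says nothing about how $v_1,v_2$ relate to those finer parts; even deciding which neighbours of $v$ are red ``at the relevant moment'' has the same ambiguity. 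So injectivity of the encoding is not established, and the count does not go through as written. Your second route does close the argument: the translation recalled at the start of \cref{sec:grid-number-graph} turns a width-$d$ partition sequence of $G$ into a width-$(d+1)$ division sequence of $\adj{<}{G}$ for some order $<$; the map $G\mapsto(<,\adj{<}{G})$ is injective; and the class of $0$--$1$ matrices of strict twin-width at most $d+1$ is submatrix-closed and of bounded twin-width, hence has $2^{O_d(n)}$ members of size $n\times n$ by \cite{twin-width4}. Multiplying by $n!$ orderings gives the small-class bound. One caveat is chronological: \cite{twin-width4} postdates \cite{twin-width2}, so you should verify that its matrix growth bound is established independently of the present graph theorem (it is, via a direct Marcus--Tardos-type argument on ordered matrices), else the reduction would be circular.
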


With this result, and the cases of matrices and permutations in mind,
the following very natural conjecture was raised.
\begin{conjecture}[Small conjecture~\cite{twin-width2}]
  \label{conj:small-conjecture}
  A class of graphs closed under induced subgraphs is small if and only if it has bounded twin-width.
\end{conjecture}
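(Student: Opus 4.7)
The forward direction --- bounded twin-width implies small --- is already \cref{thm:tww-small}, so the remaining content of \cref{conj:small-conjecture} is the converse: every small hereditary class has bounded twin-width. Having read the abstract, and in particular \cref{thm:infinite-tww}, my plan is instead to \emph{disprove} the conjecture by exhibiting a small hereditary class of unbounded twin-width.

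A natural source of candidate small classes is bounded-degree graphs. While the class of \emph{all} bounded-degree graphs is not itself small, the class of connected induced subgraphs of a single vertex-transitive locally finite graph~$\Gamma$ of bounded degree~$d$ is: the number of connected $n$-vertex subgraphs rooted at a fixed vertex is at most~$c_d^n$ (a standard counting of subtrees in a $d$-regular tree), and vertex transitivity removes the choice of root, giving at most~$n! \cdot c_d^n$ labellings on~$\{1,\dots,n\}$. Cayley graphs of finitely generated groups fit this setup exactly, and the twin-width of this hereditary class is, by the definition of twin-width on locally finite graphs, precisely the twin-width of the Cayley graph itself. So it suffices to find a finitely generated group whose Cayley graph has infinite twin-width.

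The whole disproof therefore reduces to \cref{thm:infinite-tww}, and this is the main obstacle. The abstract rules out every standard family of groups (abelian, hyperbolic, ordered, solvable, polynomial growth), so some non-classical construction is needed. My plan is to invoke Osajda's embedding theorem cited in the introduction, which produces finitely generated groups whose Cayley graphs coarsely contain a prescribed sequence of bounded-degree graphs, typically via a small-cancellation construction. The natural target sequence is bounded-degree expanders, which can be shown to have unbounded twin-width through small-separator obstructions to bounded twin-width in bounded degree.

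The last, and technically most delicate, step is to transfer unbounded twin-width from the embedded expanders to the ambient Cayley graph. Here I would rely on the stability of twin-width of bounded-degree graphs under coarse (or quasi-isometric) embedding, a property explicitly mentioned in the abstract. Granting this stability, Osajda's group has infinite twin-width, its Cayley graph furnishes a small hereditary class of unbounded twin-width, and \cref{conj:small-conjecture} fails. The most work in the whole argument will sit in verifying this final transfer and in showing that the Osajda group, which is tailored to Gromov's monster-type constructions, indeed coarsely contains a family of graphs whose twin-width goes to infinity.
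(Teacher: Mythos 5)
Your overall architecture matches the paper's: the forward direction is \cref{thm:tww-small}, the counterexample is a small hereditary class of subgraphs of a Cayley graph of a group with infinite twin-width, and the group comes from Osajda's embedding theorem. But there is a genuine gap at the heart of the plan, namely your proposed source of bounded-degree graphs with unbounded twin-width.

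You suggest using bounded-degree expanders, arguing that they have unbounded twin-width ``through small-separator obstructions.'' This is false. There is no small-separator obstruction to bounded twin-width in bounded degree: bounded twin-width does not force sublinear separators, and concretely, the paper itself points out (\cref{sec:intro-tww-bounded-degree}) that Bilu--Linial $3$-regular expanders have twin-width at most~$6$. So expanders as such are unusable, and in fact no explicit bounded-degree family of unbounded twin-width is currently known. The paper instead argues non-constructively by \emph{counting}: it defines a class $\Cc_2(n)$ of graphs with degree~$\le 6$, girth~$\ge \tfrac14\log n$, and diameter~$\le 3\log n$, shows (\cref{lem:cubic-girth-diam-not-small}) that this class is not small, and then invokes the contrapositive of \cref{thm:tww-small} to conclude it contains graphs of arbitrarily large twin-width. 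The girth and diameter constraints are not incidental --- they are exactly what Osajda's \cref{thm:graph-sequence-embedding} requires ($\girth$ increasing by at least~$6$ at each step and $\diam/\girth$ bounded), and arranging a not-small class that meets them is the technical core of \cref{sec:cubic-embedding}, via editing random cubic graphs (\cref{lem:expectation-cycles,lem:editing}).

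A smaller inaccuracy: you anticipate that transferring unbounded twin-width to the Cayley graph is delicate and requires the coarse-embedding stability result. It does not. Osajda's theorem gives \emph{isometric} embeddings, so each $G_n$ is in particular a subgraph of $\cay(\Gamma,S)$, and strict twin-width is monotone under taking subgraphs, giving $\stww(\cay(\Gamma,S)) \ge \stww(G_n) \to \infty$ directly. The coarse-embedding lemma (\cref{lem:tww-regular-embedding}) is what makes twin-width a \emph{group} invariant independent of the generating set, but it is not needed for this particular implication. Your rooted-subtree counting argument for smallness of the class of finite subgraphs of a bounded-degree vertex-transitive graph is fine and matches \cite[Lemma~8.1]{twin-width2}, which the paper cites for this step.
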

We disprove \cref{conj:small-conjecture} using an approach suggested in~\cite{twin-width2}:
if~$G$ is a Cayley graph of some group~$\Gamma$,
then the class of finite subgraphs of~$G$ is small~\cite[Lemma~8.1]{twin-width2}.
If~$\Gamma$ is a group with infinite twin-width, then this class furthermore has unbounded twin-width,
hence is a counterexample to \cref{conj:small-conjecture}.
Thus \cref{thm:infinite-tww} gives:
\begin{corollary}
  \label{cor:small-conjecture-false}
  There is a small class of graphs closed under induced subgraphs and with unbounded twin-width.
\end{corollary}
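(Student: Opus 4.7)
The proof of \cref{cor:small-conjecture-false} follows the recipe sketched in the paragraph just before its statement; my plan is simply to turn that sketch into a clean argument.

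First, I would invoke \cref{thm:infinite-tww} to fix a finitely generated group $\Gamma$ of infinite twin-width, together with a finite symmetric generating set $S$, and form the Cayley graph $G = \cay(\Gamma, S)$. Let $\Cc$ be the class of all finite graphs isomorphic to an induced subgraph of $G$. By construction $\Cc$ is closed under isomorphism and under taking induced subgraphs, so it satisfies the two formal requirements of the statement.

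Next I would verify smallness. Here I would cite \cite[Lemma~8.1]{twin-width2} exactly as the authors suggest: this lemma says that for any Cayley graph $G$ of a finitely generated group, the class of finite induced subgraphs of $G$ contains at most $c^n \cdot n!$ unlabelled graphs on $n$ vertices for some constant $c$ depending only on $G$. Applied to our $G$ this gives smallness of $\Cc$ directly.

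Finally I would argue that $\Cc$ has unbounded twin-width. The twin-width of a group is by definition the twin-width of one (equivalently any) Cayley graph on a finite generating set, and the twin-width of an infinite graph is the supremum of the twin-widths of its finite induced subgraphs (this is a standard compactness-type fact about twin-width, and is exactly how the group invariant is made to coincide with the graph-theoretic notion; it is recalled earlier in the paper alongside the quasi-isometry invariance result quoted in the abstract). Therefore, if $\Cc$ had twin-width bounded by some constant $d$, then every finite induced subgraph of $G$ would have twin-width at most $d$, which would give $\tww(G) \le d$ and hence $\tww(\Gamma) \le d$, contradicting our choice of $\Gamma$.

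The only nontrivial step above is the third one, the passage from bounded twin-width of finite subgraphs to finite twin-width of the Cayley graph; this is the sole place where I am relying on material from the body of the paper rather than purely on the excerpt. Everything else is bookkeeping: the class $\Cc$ is manifestly induced-subgraph-closed, smallness is a black-box citation, and the contradiction with \cref{thm:infinite-tww} is immediate once the supremum characterisation is in hand.
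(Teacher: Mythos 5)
Your argument is correct and follows the paper's own sketch (given in the paragraph preceding the statement) essentially verbatim: fix a finitely generated group of infinite twin-width from \cref{thm:infinite-tww}, take the class of finite induced subgraphs of a Cayley graph, cite \cite[Lemma~8.1]{twin-width2} for smallness, and use the definitional fact that twin-width of an infinite graph is the supremum over finite induced subgraphs to get unboundedness. Nothing to add.
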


\subsection{Properties of twin-width of groups}
\label{sec:intro-tww-groups}
We prove that for bounded degree graphs, finiteness of twin-width is preserved by \emph{quasi-isometries},
i.e.\ maps which preserve distances up to some affine lower and upper bounds,
and more generally by \emph{coarse} and \emph{regular} embeddings.
This is a generalisation of~\cite[Lemma~8.2]{twin-width2},
which proved that finite twin-width is a group invariant of Cayley graphs:
given a group~$\Gamma$, all Cayley graphs of~$\Gamma$ are quasi-isometric,
hence either all or none of them have finite twin-width.
In relation with coarse geometry, we remark that Gromov-hyperbolic groups have finite twin-width,
because they embed into finite products of trees with bounded degree~\cite{buyalo2007embedding}.

Groups with finite twin-width have a remarkable second characterisation:
a group~$\Gamma$ has finite twin-width if and only if there is a total order~$<$ on~$\Gamma$
such that for any~$x \in \Gamma$, the action of~$x$ on~$(\Gamma,<)$ by product
is a permutation with finite \emph{width} (in the sense of Guillemot and Marx~\cite{Guillemot14}),
or equivalently a permutation which avoids some pattern.
For example, if~$<$ is a right-invariant order on~$\Gamma$,
the action of any~$x$ is an increasing permutation,
i.e.\ a permutation which excludes a decreasing sequence of length 2 as pattern.
This characterisation extends to non-finitely generated groups,
and also suggests a natural strengthening called \emph{uniform twin-width}:
in the above statement, we require a uniform bound on the width of actions of elements of $\Gamma$,
or equivalently require all these actions to avoid the same pattern.
Having finite uniform twin-width implies finite twin-width, while the converse does not a priori hold.

Uniform twin-width is preserved by a number of group operations.
\begin{lemma}[\cref{lem:extension-gen,lem:direct-product,lem:group-limit,lem:wreath-product,lem:finite-quotient} in the text]
  \label{lem:operations-summary}
  The following constructions preserve finiteness of uniform twin-width:
  \begin{enumerate}
    \item \label{item:extension} Group extensions (and thus Cartesian or semidirect products);
    \item \label{item:direct-product} Infinite direct products and direct sums;
    \item \label{item:limit} Direct limits;
    \item \label{item:subgroup} Taking supergroups with finite index;
    \item \label{item:quotient} Quotienting by finite subgroups;
    \item \label{item:wreath} Wreath products.
  \end{enumerate}
  %In cases~\labelcref{item:extension,item:direct-product,item:limit,item:wreath},
  %upper bounds on uniform twin-width are exactly preserved,
  %while in cases~\labelcref{item:subgroup,item:quotient}, they are only preserved up to some function.
\end{lemma}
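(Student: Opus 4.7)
The plan is to exploit the order-theoretic characterization stated just above the lemma: a group $\Gamma$ has finite uniform twin-width iff there exist a total order $<$ on $\Gamma$ and a pattern $\sigma$ such that every left-multiplication $y \mapsto xy$ is a $\sigma$-avoiding permutation of $(\Gamma, <)$. For each construction it then suffices to exhibit such an order and pattern on the new group. I would prove item (1) first, reduce items (4)--(6) to it, and handle items (2) and (3) by a compactness argument.

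For an extension $1 \to N \to G \to Q \to 1$, fix orders $<_N, <_Q$ and patterns $\sigma_N, \sigma_Q$ witnessing uniform twin-width of $N$ and $Q$. Choose a set-theoretic section $s : Q \to G$, write every $g \in G$ uniquely as $s(q)\,n$, and order $G$ lexicographically by the $Q$-coordinate first and the $N$-coordinate second. The identity $g \cdot s(q) = s(\bar g q) \cdot m(g, q)$ with $m(g, q) \in N$ shows that left multiplication by $g$ permutes the $Q$-fibres by left multiplication by $\bar g \in Q$ (avoiding $\sigma_Q$) and acts inside each fibre as left multiplication on $N$ by $m(g, q)$ (avoiding $\sigma_N$). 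The resulting permutation has a $\sigma_Q$-avoiding block structure with $\sigma_N$-avoiding blocks, and thus avoids an explicit pattern—an inflation of $\sigma_Q$ by $\sigma_N$—uniformly in $g$. Direct and semidirect products are particular extensions; for finite-index supergroups (item 4) and quotients by finite normal subgroups (item 5), the same lex construction specialises with coset representatives or a transversal. Wreath products $A \wr B = \bigl(\bigoplus_B A\bigr) \rtimes B$ then follow by combining item (2) with the extension construction.

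For infinite direct products and sums (item 2) and direct limits (item 3), the strategy is to produce the order on the target group $\Gamma$ by compactness. Fix a common avoided pattern $\sigma$ and view candidate orders as points in the compact space $\{0, 1\}^{\Gamma \times \Gamma}$. Both ``$<$ is a linear order'' and ``every left multiplication $y \mapsto xy$ avoids $\sigma$'' are closed conditions, each being determined by finitely many pairs. For a direct limit $\Gamma = \varinjlim \Gamma_i$, witnessing orders on the $\Gamma_i$ define a coherent family of constraints along the directed system, and a Tychonoff or ultralimit argument extracts an order on $\Gamma$ satisfying them all. For infinite direct products and sums, one handles finite subproducts by iterating item (1)—using the specific product structure to keep the avoided pattern bounded uniformly in the number of factors—and then passes to the limit in the same way.

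The main obstacle is the inflation step for item (1): one must verify that a permutation decomposing as a $\sigma_Q$-avoiding block permutation whose blocks are $\sigma_N$-avoiding itself avoids a single pattern depending only on $\sigma_N$ and $\sigma_Q$, and not on $g$. This is a standard closure property of pattern-avoiding permutations under substitution, but it has to be quantified explicitly to produce a uniform bound. The secondary difficulty is ensuring, for item (2), that the pattern produced by iterated finite extensions is uniform in the number of factors, so that compactness actually delivers uniform twin-width of the limit and not merely finite twin-width.
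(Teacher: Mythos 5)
Your overall skeleton---lexicographic order on the extension, block structure of the action, compactness/ultrafilter to pass to direct limits---is the right one and matches the paper's proof for items (1), (3), (4), and (6). The compactness step you describe for item (3) is exactly the ultrafilter argument in \cref{lem:direct-limit-order}. Two of the items, however, do not go through as you have set them up.

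First, \textbf{item (5) is not a specialisation of item (1)}. Your lexicographic construction bounds $\utww(G)$ in terms of $\utww(N)$ and $\utww(G/N)$, i.e.\ it shows an extension of two groups with finite uniform twin-width again has finite uniform twin-width. Item (5) asks for the \emph{reverse} implication: given that $G$ has finite uniform twin-width and $N \lhd G$ is finite, conclude the same for $G/N$. Nothing in the extension argument touches this direction. The paper gives a separate proof (\cref{lem:finite-quotient}): pick coset representatives $\iota(a) \in a$, order $G/N$ by restricting a witnessing order from $G$, and observe that the action matrix $M^{G/N}_a$ is dominated by the superposition of the $|N|$ matrices $M^G_x$ for $x \in a$, restricted to $\iota(G/N)$. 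One then needs a superposition bound (\cref{lem:grid-ramsey}, a Ramsey-type argument on grids) to get from ``each $M^G_x$ has small grid number'' to ``their superposition does too.'' This is a genuinely different ingredient from the substitution step.

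Second, the difficulty you flag for item (2) is real and your proposal does not resolve it; in fact it is precisely the point at which phrasing everything in terms of an avoided pattern (rather than strict twin-width) loses too much. Inflating a pattern of size $k$ by a pattern of size $\ell$ produces a pattern of size $k\ell$, so iterating your item (1) over $n$ factors gives an avoided pattern of size exponential in $n$; compactness then only yields \emph{finite} twin-width of the limit, not a uniform bound, and direct products (as opposed to sums) contain elements of infinite support that finite subproducts never see. The paper sidesteps all of this with \cref{lem:tww-substitution}: strict twin-width of a substituted bijection matrix is \emph{exactly} the maximum of the strict twin-widths of the pieces, with no blow-up. This exact preservation is then pushed to infinite tensor products in \cref{lem:tww-tensor-product} by noting that a finite submatrix of $\bij f$ (for $f$ an infinite product of bijections) only depends on finitely many coordinates, and from there \cref{lem:direct-product} gives $\utww(\prod_i G_i) = \sup_i \utww(G_i)$ with no compactness argument at all. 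So the missing idea is not a refinement of the pattern bookkeeping, but the switch from ``avoids a pattern of some size'' to the strict-twin-width invariant, which behaves additively rather than multiplicatively under substitution.
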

From this and a few other results, we obtain many examples of groups with finite uniform twin-width:
abelian groups, solvable groups, groups with polynomial growth, lamplighter groups, automorphism groups of trees.

We conjecture that uniform twin-width is strictly stronger than twin-width,
i.e.\ there are groups with finite twin-width and infinite uniform twin-width.
A candidate is the group of finitely supported permutations on~$\Zz$:
it has finite twin-width, and we think it is unlikely to have finite uniform twin-width
as this would imply a universal bound on the uniform twin-width of finite groups,
and more generally elementary amenable groups.

\subsection{Twin-width of bounded degree graphs}
\label{sec:intro-tww-bounded-degree}
Bounded degree graphs are an interesting and not very well understood special case in which to consider twin-width.
It is known that graphs with bounded degree can have infinite twin-width by the counting argument of the previous section:
it is easy to verify that the class of 3-regular graphs is not small,
hence it contains finite graphs with arbitrarily large twin-width by \cref{thm:tww-small}---%
in fact, for any $k$, almost all 3-regular graphs have twin-width more than~$k$.
On the other hand, no explicit or deterministic construction
for bounded degree graphs with unbounded twin-width is known.
Finding such graphs could significantly help our understanding of twin-width.
Bilu-Linial expanders~\cite{Bilu06} are an interesting example:
they are 3-regular expanders, hence seemingly complex graphs,
but with a structured construction which allows to prove that
they have twin-width at most~6~\cite[Section~5]{twin-width2}.

If~$\Gamma$ is a group with infinite twin-width, the class of subgraphs of some Cayley graph of~$\Gamma$
has bounded degree and unbounded twin-width, which could solve the aforementioned problem.
Unfortunately, the group constructed in \cref{thm:infinite-tww} does not help in this regard:
its construction starts from a sequence of bounded degree graphs
with unbounded twin-width in the first place, and relies largely on probabilistic methods.
Thus the same question remains for groups:
can we find a more explicit construction of a group with infinite twin-width,
or one with a direct proof that twin-width is infinite?

\subsection{Other graph parameters for groups}
\label{sec:intro-queue-number}
This paper proposes to consider twin-width on groups.
It is natural to ask whether the same can be done with other classical graph parameters.

Tree-width---arguably the most important graph `width' parameter---%
is a group invariant, studied by Kuske and Lohrey~\cite{kuske2005logical}.
Using that graphs with bounded tree-width and bounded degree can be partitioned
into parts of bounded size so that the quotient is a tree~\cite{ding1995decomposition},
they prove the equivalence between finite tree-width, virtually free,
context-free word problem, and the Cayley graphs having decidable monadic second-order theory.

Stack and queue number are two other graph parameters, sitting between tree-width and twin-width:
bounded tree-width implies bounded stack and queue number~\cite{ganley2001pagenumber,dujmovic2005layout},
either of which implies bounded twin-width~\cite[Theorem~7.4]{twin-width2}.
Eppstein et al.\ recently proved that the 3-dimensional grid with diagonals
has infinite stack number~\cite{eppstein2022threedimensional}.
This is the first known explicit construction of bounded degree graphs with unbounded stack number
(probabilistic constructions were previously known~\cite{malitz1994pagenumber}).
This implies that stack number is not a group invariant,
because the 3-dimensional grid \emph{without} diagonals has finite stack number,
and both variants of the grid are Cayley graphs of~$\Zz^3$.

On the other hand, queue number can be shown to be a group invariant,
and a surprisingly large part of the results of this paper can be adapted to queue number.
The group of \cref{thm:infinite-tww} of course has infinite queue number,
since finite queue number implies finite twin-width.
Queue number is characterised by excluding decreasing sequences in the adjacency matrix
for some choice of vertex ordering.
This somewhat resembles the matrix characterisation of twin-width,
and allows to define a \emph{uniform} queue number for groups.
Most operations of \cref{lem:operations-summary} also preserve finite uniform queue number.
However, unlike for twin-width, products and extensions preserve finiteness of uniform queue number,
but not the exact bound on the latter.
Thus, we still obtain for instance that finitely generated abelian groups have finite uniform queue number,
but the proof does not give a uniform bound on the latter,
and thus does not go through direct limits to reach non-finitely generated abelian groups.
Despite this, we do not have examples of groups with finite twin-width and infinite queue number
(or similarly for the uniform variants).
Separating twin-width and queue number on sparse graphs is an open problem.

\subsection{Organisation of the paper}
\Cref{sec:notation} summarises basic notions and notations used in this paper.
\Cref{sec:twinwidth} presents the definition of twin-width, and known results used in this work.
Because this paper focuses solely on graphs of bounded degree,
we give a more restricted definition than the one of~\cite{twin-width1},
which slightly simplifies several of these results.

\Cref{sec:geometric-tww} presents twin-width as a quasi-isometric invariant property of Cayley graph,
giving some basic examples, as well as examples related to geometric group theory.
\Cref{sec:utww} introduces a second characterisation of twin-width of groups using matrices,
and a natural strengthening: uniform twin-width.
\Cref{sec:finite-tww} proves a number of stability results for (uniform) twin-width under common group operations,
and uses these results to show that several well-known classes of groups have finite twin-width.

\Cref{sec:infinite-tww} constructs a finitely generated group with infinite twin-width,
by embedding a well-chosen sequence of graphs of bounded degree into a~group.
This construction gives a recursive presentation, with decidable word problem.
It follows that there are finitely presented groups with infinite twin-width.
This can be read independently of \cref{sec:utww,sec:finite-tww}.

Finally, \cref{sec:queue-number} discusses the relation of twin-width and queue number in groups.
We show that many results of this paper also hold when replacing twin-width with queue number,
and we point out the points where this replacement fails.

\section{Preliminaries and notations}
\label{sec:notation}
\subsection{Graphs}
We consider undirected graphs, without loops nor multiple edges, and which may be finite or countably infinite.
A graph is denoted as a pair $G = (V,E)$ with~$V$ and~$E$ the sets of vertices and edges respectively,
and we also write~$V(G)$ for~$V$, and~$E(G)$ for~$E$.

If~$G,H$ are graphs, $H$ is a \emph{subgraph} of~$G$ if $V(H) \subset V(G)$ and $E(H) \subset E(G)$.
It is an \emph{induced subgraph} if $V(H) \subset V(G)$,
and~$E(H)$ consists exactly of the edges in~$E(G)$ with endpoints in~$V(H)$.
A \emph{class} of graphs is a collection of graphs which is closed under isomorphism.
A class is \emph{hereditary} if it is also closed under taking induced subgraphs.

If $v \in V(G)$, its degree~$d(v)$ is the number of edges incident to~$v$.
We denote by~$\Delta(G)$ the maximum degree of vertices in~$G$.
A graph is $k$-\emph{regular} if all its vertices have degree~$k$.
A \emph{cubic} graph is a 3-regular graph.

The \emph{diameter} of a connected graph~$G$, denoted~$\diam(G)$, is the maximum distance between two vertices of~$G$.
The \emph{girth} of a graph~$G$, denoted~$\girth(G)$, is the minimum length of a cycle in~$G$.

\subsection{Matrices}
We consider binary, possibly infinite matrices,
and pay particular attention to the order of their rows and columns.
That is, if $(X,<_X)$, $(Y,<_Y)$ are two totally ordered, possibly infinite sets,
an $X \times Y$ binary matrix is a map $M : X \times Y \to \{0,1\}$.
Equivalently, it is a binary relation between ordered sets.
We call~$X$, respectively~$Y$, the set of column, respectively row, indices.
The matrix~$M$ is finite if~$X,Y$ are finite.
A \emph{submatrix} of~$M$ is obtained by restricting~$M$ to $X' \times Y'$ for some $X' \subset X, Y' \subset Y$.
By analogy with graphs, we call degree of a column or a row of a matrix the number of~1-entries it contains.

Given $f : X \to Y$, the matrix~$\bij f$ of~$f$ is defined by $\bij f (x,y) = 1$ if and only if $f(x) = y$.
We will primarily consider this construction when~$f$ is bijective,
in which case we call~$\bij f$ a \emph{bijection matrix}.
Bijection matrices are exactly the matrices with all rows and columns of degree 1.
In the finite case, they are more commonly called permutation matrices,
but this would be abusive in general since~$X$ and~$Y$ need not be isomorphic as orders.

Given a graph $G = (V,E)$ and a total order~$<$ on~$V$,
the adjacency matrix~$\adj{<}{G}$ is the $(V,<) \times (V,<)$ matrix
such that $\adj{<}{G}(x,y) = 1$ if and only if~$x,y$ are adjacent in~$G$.

\subsubsection{Divisions of matrices}
If~$M$ is an $X \times Y$ matrix, a $k \times l$\emph{-division} of~$M$ consists of
partitions of rows and columns into intervals;
i.e.\ it is given by $\Xc = \{X_1, \dots, X_k\}$ partition of~$X$, $\Yc = \{Y_1, \dots, Y_l\}$ partition of~$Y$,
such that every $X_i$ or $Y_j$ is an interval, and with $X_1 <_X \dots <_X X_k$ and $Y_1 <_Y \dots <_Y Y_l$.
The \emph{zone}~$(i,j)$ of the division~$(\Xc,\Yc)$ is the submatrix~$M$ restricted to~$X_i \times Y_j$.
The \emph{quotient} matrix~$M/(\Xc,\Yc)$ is the~$\Xc \times \Yc$ matrix
with a~`1' at position~$X_i,Y_j$ if and only if the corresponding zone~$X_i \times Y_j$ of~$M$
is non-zero (i.e.\ contains at least a~`1').

\subsection{Groups}
Unless stated otherwise, we consider Cayley graphs, group actions, etc.\ on the right.
If~$\Gamma$ is a group and~$S \subset \Gamma$ a finite generating set, its Cayley graph is
\[ \cay(\Gamma, S) \eqdef \left( \Gamma, \setst{xy}{x^{-1}y \in S \cup S^{-1}} \right). \]
Thus edges go from~$x$ to~$y$ by multiplying on the right by a generator.

A group presentation consists of a set~$S$ of generators, and a set~$R$ of relators
which are finite words over~$S \uplus S^{-1}$, where~$S^{-1}$ denotes formal inverses of elements of~$S$.
The group associated to this presentation, denoted~$\grppres{S}{R}$,
is the free group over~$S$ quotiented by the normal subgroup generated by~$R$.
A group~$G$ is said to admit~$\grppres{S}{R}$ as presentation if it is isomorphic to~$\grppres{S}{R}$.
A group is finitely presented if it admits a presentation with finite sets of generators and of relators.

For a group~$\Gamma$ finitely generated by~$S$, the word problem is the following decision problem:
given~$w$ a word over~$S \cup S^{-1}$, is~$w$ interpreted as an element of~$\Gamma$ equal to the neutral element?
This problem can be undecidable even for finitely presented groups.
Decidability of the word problem is well known to be independent of the choice of~$S$.

\subsection{Coarse geometry}
In coarse geometry, distances in metric spaces are considered up to some possibly large error.
The fundamental notion is the quasi-isometry.
For~$\lambda \ge 0$, a map $f : X \to Y$ between metric spaces is
a~\emph{$\lambda$-quasi-isometric embedding} if for all $x,x' \in X$,
\[ \lambda^{-1} d_X(x,x') - \lambda \le d_Y\left( f(x),f(x') \right) \le \lambda d_X(x,x') + \lambda \]
Thus quasi-isometric embeddings are maps which preserve distances up to affine lower and upper bound.
The map~$f$ is a \mbox{\emph{$\lambda$-quasi-isometry}} if it is also \mbox{$\lambda$-cobounded},
i.e.\ every~$y \in Y$ is at distance at most~$\lambda$ of~$f(X)$.
The spaces~$X$ and~$Y$ are said to be quasi-isometric
if there is a $\lambda$-quasi-isometry $X \to Y$ for some~$\lambda$.
This is equivalent to having back and forth quasi-isometric embeddings between $X$ and $Y$
whose compositions are within a bounded distance of identity maps.
Quasi-isometry is an equivalence relation.

In a graph~$G$, the vertex set~$V(G)$ is equipped with the shortest path distance~$d_G(x,y)$.
This defines a metric space when~$G$ is connected.
Remark that this metric space is quasi-isometric to the geometric realisation of~$G$,
in which one replaces each edge of~$G$ by a path isometric to the real segment~$[0,1]$.

Given a group~$\Gamma$ and~$S,S'$ two finite generating sets of~$\Gamma$,
it is easy to verify that $\cay(\Gamma,S)$ and $\cay(\Gamma,S')$ are quasi-isometric.
This is the foundation of coarse geometry on groups.

\emph{Regular embeddings} are another notion of embedding in coarse geometry,
which is particularly interesting for graphs seen as metric spaces.
A map $f : X \to Y$ is a $\lambda$-regular embedding if it is $\lambda$-Lipschitz
and for all $y \in Y$, $\card{f^{-1}(y)} \le \lambda$.
If all balls of~$X$ have cardinality bounded by a function of their radius,
then any quasi-isometric embedding~$X \to Y$ is also a regular embedding.
In particular, this is the case when~$X$ is a graph with bounded degree.

\subsection{Direct limits}
Let~$S$ be some structure (in our case, a group or a graph),
and let~$(S_i)_{i \in I}$ be a family of substructures of~$S$.
The family~$(S_i)_{i \in I}$ is \emph{directed} if for all~$i,j \in I$,
there exists~$k \in I$ with~$S_i \cup S_j \subseteq S_k$.
The structure~$S$ is said to be direct limit of~$(S_i)_{i \in I}$
if~$(S_i)_{i \in I}$ is directed and~$S = \bigcup_{i \in I} S_i$. 
For instance, chains are a very special case of directed families,
a graph is always direct limit of all its finite induced subgraphs,
and similarly a group is direct limit of its finitely generated subgroups.

The following is a folklore lemma which we use to show that twin-width goes through direct limits.
We give a proof with ultrafilter arguments.
When the directed family is isomorphic to~$\Nn$ as order,
one can also prove it using Kőnig's theorem, or an induction and the axiom of countable choice.
\begin{lemma}
  \label{lem:direct-limit-order}
  Let~$S$ be direct limit of~$(S_i)_{i \in I}$,
  and for each~$i \in I$ let~$<_i$ be a total order on~$S_i$.
  Then there exists a total order~$<$ on~$S$ such that
  for any finite~$X \subset S$, there is some~$i \in I$ with~$X \subseteq S_i$
  and such that~$<$ and~$<_i$ coincide on~$X$.
\end{lemma}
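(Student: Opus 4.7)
My plan is to define the order~$<$ on~$S$ by voting among the orders~$<_i$ using a suitably chosen ultrafilter on~$I$. For each~$i_0 \in I$, let $U_{i_0} = \{i \in I : S_{i_0} \subseteq S_i\}$. By directedness of the family~$(S_i)_{i \in I}$, the collection $\{U_{i_0} : i_0 \in I\}$ has the finite intersection property, so it extends to an ultrafilter~$\mathcal{U}$ on~$I$. The key feature of~$\mathcal{U}$ is that for any finite~$X \subseteq S$, one can find~$i_0$ with $X \subseteq S_{i_0}$, and then $X \subseteq S_i$ for $\mathcal{U}$-almost every~$i$.

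I would then define~$<$ on~$S$ as follows: for $a \ne b$ in~$S$, set $a < b$ iff $\{i \in I : a, b \in S_i \text{ and } a <_i b\} \in \mathcal{U}$. This is well-defined because the set~$\{i : a, b \in S_i\}$ contains some~$U_{i_0}$, hence lies in~$\mathcal{U}$, and so by the ultrafilter property exactly one of the two sets $\{i : a <_i b\}$ and $\{i : b <_i a\}$ (restricted to indices where both elements are available) is large.

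Verifying that~$<$ is a total order is then routine: trichotomy follows from the ultrafilter dichotomy just mentioned, and transitivity from the fact that if $a < b$ and $b < c$, the intersection of the corresponding large sets is still in~$\mathcal{U}$, and on that intersection $a <_i b <_i c$ forces $a <_i c$.

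For the coherence property, given a finite $X \subseteq S$, choose~$i_0$ with $X \subseteq S_{i_0}$; there are only finitely many ordered pairs $(a,b) \in X \times X$ with $a < b$, and for each of them the witnessing set is in~$\mathcal{U}$. Intersecting these finitely many sets with~$U_{i_0}$ still yields an element of~$\mathcal{U}$, which is in particular non-empty, so we may pick any~$i$ in it; by construction $X \subseteq S_i$ and $<$ and $<_i$ agree on~$X$. The only mild subtlety in the whole argument is checking that the intersections involved remain in~$\mathcal{U}$ (hence non-empty), which is just finite intersection closure of an ultrafilter; I do not anticipate any real obstacle.
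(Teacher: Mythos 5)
Your proposal is correct and follows essentially the same route as the paper's proof: build the filter generated by the sets $\{i : S_{i_0} \subseteq S_i\}$, extend it to an ultrafilter, and define $<$ by $\mathcal{U}$-majority vote among the $<_i$. The only cosmetic difference is that you verify transitivity directly from closure of $\mathcal{U}$ under finite intersections, whereas the paper derives it as a consequence of the coherence property; both are sound.
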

\begin{proof}
  \newcommand{\upclos}{\uparrow\!}
  Order~$I$ by~$i \le j$ if~$S_i \subseteq S_j$.
  For~$i \in I$, define its upward closure $\upclos i = \setst{j}{j \ge i}$.
  Consider the family of non-empty and eventually upward-closed subsets
  \[ \mathcal{F} \eqdef \setst{J \subseteq I}{\exists i \in I,~\upclos i \subseteq J}. \]
  If~$A \in \mathcal{F}$ and~$A \subset B$, then clearly~$B \in \mathcal{F}$.
  Furthermore, given~$A_1,A_2 \in \mathcal{F}$, consider~$i_1,i_2$ such that~$\upclos i_j \subseteq A_j$.
  Since~$(S_i)_{i \in I}$ is directed, there exists~$k \in I$ such that~$i_1,i_2 \le k$.
  Then~$\upclos k \subseteq A_1 \cap A_2$, and thus~$A_1 \cap A_2 \in \mathcal{F}$.
  This proves that~$\mathcal{F}$ is a proper filter,
  hence there is an ultrafilter~$\mathcal{U}$ containing~$\mathcal{F}$.

  For~$x \neq y \in S$, consider the indices which order~$x$ before~$y$.
  \[ I_{xy} \eqdef \setst{i \in I}{x <_i y} \]
  The limit order on $S$ in then defined as $x < y$ if and only if $I_{xy} \in \mathcal{U}$.
  Since~$S$ is direct limit of~$(S_i)_{i \in I}$, for any~$x,y \in S$, there is some~$i \in I$ such that~$x,y \in S_i$.
  Assuming~$x \neq y$, it is clear that $\upclos i \subseteq I_{xy} \cup I_{yx}$:
  for any~$j \ge i$, $x$ and~$y$ must be comparable by $<_j$.
  Since~$\mathcal{U}$ is an ultrafilter, it follows that either~$I_{xy} \in \mathcal{U}$ or~$I_{yx} \in \mathcal{U}$
  (but not both as they are disjoint).
  Thus~$<$ is a total, irreflexive and antisymmetric relation.
  Let us prove the condition on~$<$ required by the lemma---transitivity follows from it.

  Given~$X \subseteq S$ any finite subset, there is some~$i \in I$ with~$X \subseteq S_i$.
  Define
  \[ A \eqdef (\upclos i) \cap \bigcap_{\substack{x,y \in X \\ x < y}} I_{xy} \]
  Then~$A$ is a finite intersection of elements of~$\mathcal{U}$,
  hence~$A \in \mathcal{U}$ and in particular it is non-empty.
  Choose~$j \in A$.
  Then~$j \ge i$, hence~$X \subseteq S_j$.
  Furthermore for any~$x < y$ in~$X$, we have~$j \in I_{xy}$ hence~$x <_j y$.
  Since both~$<$ and~$<_j$ are total, irreflexive, and antisymmetric on~$X$,
  this implies that they coincide on~$X$ as desired.
\end{proof}

\section{Twin-Width of graphs and matrices}
\label{sec:twinwidth}
This section recalls the definition of twin-width~\cite{twin-width1}, and some fundamental results.
We will present a simplified definition specialised for graphs of bounded degree, called \emph{strict twin-width}.
The two notions are equivalent on graphs of bounded degree,
in the sense that they differ by at most the maximum degree of the graph.
When considering matrices, strict twin-width is closely related to the work of Guillemot and Marx
who first introduced twin-width for permutation matrices~\cite{Guillemot14}.
The reader may refer to \cite{twin-width1} for the general definition.
See also \cite[Section~7]{twin-width2} for a discussion on twin-width for sparse graphs beyond bounded degree.

\subsection{Definitions and characterisations}
\subsubsection{Twin-width of graphs}
Let $G=(V,E)$ be a finite graph and~$\Pc$ be a partition of~$V(G)$.
The \emph{quotient graph}~$G/\Pc$ has vertex set~$\Pc$,
and its edges are all~$XY$ with $X,Y\in \Pc$ such that there exists an edge~$xy$ of~$G$ with~$x\in X$ and~$y\in Y$.
A \emph{partition sequence} is a sequence $\Pc_n,\dots,\Pc_{1}$ of partitions of~$V(G)$
such that $\Pc_n=\setst{\{v\}}{v \in V}$ is the partition into singletons,
$\Pc_{1}=\{V\}$ is the trivial partition,
and each~$\Pc_{i-1}$ is obtained by merging two parts of~$\Pc_i$.
The \emph{width} of this partition sequence is
the maximum degree of the quotient graphs, i.e.~$\max_{i=1}^n \Delta(G / \Pc_i)$.
The \emph{strict twin-width} of~$G$, denoted~$\stww(G)$,
is the minimum width of a partition sequence starting with~$G$.

For a partition sequence $\Pc_n,\dots,\Pc_1$,
one may also see the quotients graphs $G/\Pc_n, \dots, G/\Pc_1$ as obtained
by successive contractions of arbitrary pairs of vertices,
starting from~$G$ and ending with a single vertex left, see \cref{fig:contraction-sequence}.
Here, we insist that contracting pairs of non-adjacent vertices is allowed.

\begin{figure}
  \begin{center}
    \begin{tikzpicture}
      \tikzstyle{every node}=[normalnode]
      \def\s{3.3cm}
      \def\t{2cm}

      \foreach \i in {0,...,3}{
        \node (\i0) at (90*\i+45:0.7) {};
        \node (\i1) at (90*\i+45:1.5) {};
      }
      \draw (00) -- (10) -- (20) -- (31) -- (21);
      \draw (31) -- (01) -- (11) -- (21) -- (30) -- (00);
      \draw (10) -- (11);
      \draw (00) -- (20);
      \draw (30) -- (01);
      \draw[thick] (01) circle (0.2);
      \draw[thick] (21) circle (0.2);

      \begin{scope}[xshift=\s]
        \foreach \i in {1,2,3}{
        \node (\i0) at (90*\i+45:0.7) {};
        \node (\i1) at (90*\i+45:1.5) {};
      }
      \node (00) at (45:0.7) {};

      \draw (00) -- (10) -- (20) -- (31) -- (21);
      \draw (11) -- (21) -- (30) -- (00);
      \draw (10) -- (11);
      \draw (00) -- (20);
      \draw[thick] (00) circle (0.2);
      \draw[thick] (20) circle (0.2);
      \end{scope}

      \begin{scope}[xshift=2*\s]
        \foreach \i in {1,2,3}{
        \node (\i0) at (90*\i+45:0.7) {};
        \node (\i1) at (90*\i+45:1.5) {};
      }

      \draw (10) -- (20) -- (31) -- (21);
      \draw (11) -- (21) -- (30) -- (20);
      \draw (10) -- (11);
      \draw[thick] (30) circle (0.2);
      \draw[thick] (31) circle (0.2);
      \end{scope}

      \begin{scope}[yshift=-\s]
        \foreach \i in {1,2}{
        \node (\i0) at (90*\i+45:0.7) {};
        \node (\i1) at (90*\i+45:1.5) {};
      }
      \node (30) at (-45:0.7) {};

      \draw (10) -- (20) -- (30) -- (21) -- (11) -- (10);
      \draw[thick] (20) circle (0.2);
      \draw[thick] (30) circle (0.2);
      \end{scope}

      \begin{scope}[yshift=-\s,xshift=1.5*\t]
        \foreach \i in {1,2}{
        \node (\i0) at (90*\i+45:0.7) {};
        \node (\i1) at (90*\i+45:1.5) {};
      }

      \draw (10) -- (20) -- (21) -- (11) -- (10);
      \draw[thick] (11) circle (0.2);
      \draw[thick] (21) circle (0.2);
      \end{scope}

      \begin{scope}[yshift=-\s,xshift=2.5*\t]
      \node (10) at (135:0.7) {};
      \node (20) at (225:0.7) {};
      \node (3) at (-1.2,0) {};

      \draw (10) -- (20) -- (3) -- (10);
      \draw[thick] (10) circle (0.2);
      \draw[thick] (20) circle (0.2);
      \end{scope}

      \begin{scope}[yshift=-\s,xshift=3.4*\t]
      \node (1) at (0,0) {};
      \node (2) at (-1,0) {};

      \draw (1) -- (2);
      \draw[thick] (1) circle (0.2);
      \draw[thick] (2) circle (0.2);
      \end{scope}

      \begin{scope}[yshift=-\s,xshift=4*\t]
      \node (1) at (0,0) {};
      \end{scope}
    \end{tikzpicture}
  \end{center}
  \caption{%
    A contraction sequence of width 3 (i.e.\ the quotient graphs of a partition sequence),
    represented in reading order. At each step, vertices to be contracted are circled.
  }
  \label{fig:contraction-sequence}
\end{figure}
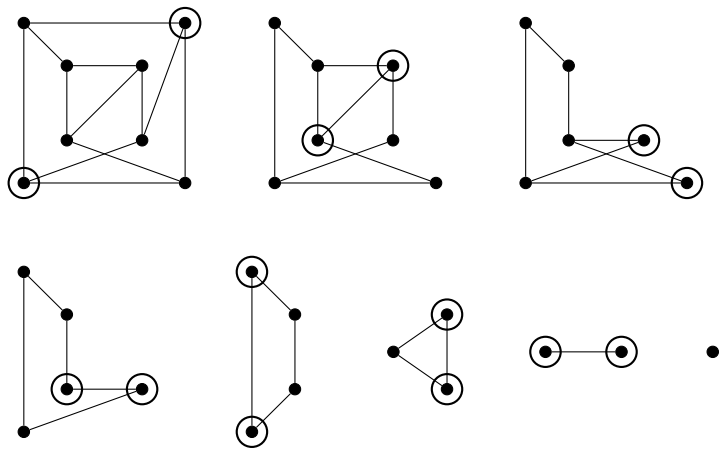

Remark that~$\stww(G)$ is at least~$\Delta(G)$.
Strict twin-width is also non-increasing when taking subgraphs.%
\footnote{%
  Non-strict twin-width is only stable under taking \emph{induced} subgraphs, and has no relation to~$\Delta(G)$.
}
Strict twin-width has the following relation to the general notion of twin-width,
denoted~$\tww(G)$, as defined in~\cite{twin-width1}.
\begin{equation}
  \max\left(\Delta(G), \tww(G)\right) \le \stww(G) \le \Delta(G) + \tww(G)
\end{equation}
That is, the two notions are equivalent up to an additive constant on graphs of bounded degree.
We will only use strict twin-width in this work, and will sometimes drop the `strict' qualifier when unambiguous.

Twin-width is continuously extended to an infinite graph~$G$, i.e.
\[ \stww(G) \eqdef \sup \setst{\stww(H)}{\text{$H$ finite induced subgraph of $G$}}. \]
The graph~$G$ has \emph{finite strict twin-width} if~$\stww(G) < \infty$.

\subsubsection{Twin-width of matrices}
Twin-width extends to binary matrices, by contracting rows and columns instead of vertices.
However, in the case of matrices we only allow contraction of consecutive rows and columns.

Let~$M$ be a finite matrix.
Recall that a division of~$M$ is a partition of its row and column indices into intervals.
A division sequence for~$M$ is a sequence $(\Xc_n,\Yc_n), \dots, (\Xc_1,\Yc_1)$ of divisions of~$M$ such that
$(\Xc_n,\Yc_n)$ is the division into singletons, $(\Xc_1,\Yc_1) = (\{X\},\{Y\})$ is the trivial division,
and~$(\Xc_{i-1},\Yc_{i-1})$ is obtained from~$(\Xc_i,\Yc_i)$ by merging two parts either in~$\Xc_i$ or in~$\Yc_i$.
The \emph{width} of this division sequence is the maximum degree of
any column or row in any quotient matrix~$M / (\Xc_i,\Yc_i)$, see \cref{fig:matrix-contraction}.
The strict twin-width of~$M$, denoted by~$\stww(M)$, is the minimum width of a division sequence of~$M$.
As with graphs, it is extended to infinite matrices
by taking the supremum of strict twin-width over all finite submatrices.

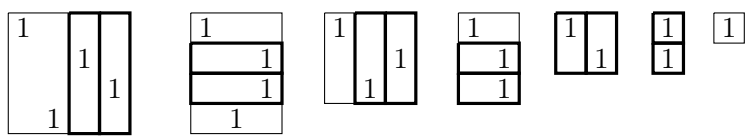
\begin{figure}
  \begin{center}
    \begin{tikzpicture}[scale=0.4]
      \def\s{4cm}

      \def\h{4}
      \def\w{4}
      \foreach \i/\j in {0/0,1/3,2/1,3/2}{
        \node (\i\j) at (\i,-\j) {1};
      }
      \draw (-0.5,0.5) -- (-0.5,-\h+0.5) -- (\w-0.5,-\h+0.5) -- (\w-0.5,0.5) -- (-0.5,0.5);
      \foreach \x in {2,3}{
        \draw[very thick] (\x-0.5,0.5) -- (\x-0.5,-\h+0.5) -- (\x+0.5,-\h+0.5) -- (\x+0.5,0.5) -- (\x-0.5,0.5);
      }

      \begin{scope}[xshift=1.5*\s]
      \def\h{4}
      \def\w{3}
      \foreach \i/\j in {0/0,1/3,2/1,2/2}{
        \node (\i\j) at (\i,-\j) {1};
      }
      \draw (-0.5,0.5) -- (-0.5,-\h+0.5) -- (\w-0.5,-\h+0.5) -- (\w-0.5,0.5) -- (-0.5,0.5);
      \foreach \y in {1,2}{
        \draw[very thick] (-0.5,-\y+0.5) -- (\w-0.5,-\y+0.5) -- (\w-0.5,-\y-0.5) -- (-0.5,-\y-0.5) -- (-0.5,-\y+0.5);
      }
      \end{scope}

      \begin{scope}[xshift=2.6*\s]
      \def\h{3}
      \def\w{3}
      \foreach \i/\j in {0/0,1/2,2/1}{
        \node (\i\j) at (\i,-\j) {1};
      }
      \draw (-0.5,0.5) -- (-0.5,-\h+0.5) -- (\w-0.5,-\h+0.5) -- (\w-0.5,0.5) -- (-0.5,0.5);
      \foreach \x in {1,2}{
        \draw[very thick] (\x-0.5,0.5) -- (\x-0.5,-\h+0.5) -- (\x+0.5,-\h+0.5) -- (\x+0.5,0.5) -- (\x-0.5,0.5);
      }
      \end{scope}

      \begin{scope}[xshift=3.7*\s]
      \def\h{3}
      \def\w{2}
      \foreach \i/\j in {0/0,1/2,1/1}{
        \node (\i\j) at (\i,-\j) {1};
      }
      \draw (-0.5,0.5) -- (-0.5,-\h+0.5) -- (\w-0.5,-\h+0.5) -- (\w-0.5,0.5) -- (-0.5,0.5);
      \foreach \y in {1,2}{
        \draw[very thick] (-0.5,-\y+0.5) -- (\w-0.5,-\y+0.5) -- (\w-0.5,-\y-0.5) -- (-0.5,-\y-0.5) -- (-0.5,-\y+0.5);
      }
      \end{scope}

      \begin{scope}[xshift=4.5*\s]
      \def\h{2}
      \def\w{2}
      \foreach \i/\j in {0/0,1/1}{
        \node (\i\j) at (\i,-\j) {1};
      }
      \draw (-0.5,0.5) -- (-0.5,-\h+0.5) -- (\w-0.5,-\h+0.5) -- (\w-0.5,0.5) -- (-0.5,0.5);
      \foreach \x in {0,1}{
        \draw[very thick] (\x-0.5,0.5) -- (\x-0.5,-\h+0.5) -- (\x+0.5,-\h+0.5) -- (\x+0.5,0.5) -- (\x-0.5,0.5);
      }
      \end{scope}

      \begin{scope}[xshift=5.3*\s]
      \def\h{2}
      \def\w{1}
      \foreach \i/\j in {0/0,0/1}{
        \node (\i\j) at (\i,-\j) {1};
      }
      \draw (-0.5,0.5) -- (-0.5,-\h+0.5) -- (\w-0.5,-\h+0.5) -- (\w-0.5,0.5) -- (-0.5,0.5);
      \foreach \y in {0,1}{
        \draw[very thick] (-0.5,-\y+0.5) -- (\w-0.5,-\y+0.5) -- (\w-0.5,-\y-0.5) -- (-0.5,-\y-0.5) -- (-0.5,-\y+0.5);
      }
      \end{scope}

      \begin{scope}[xshift=5.8*\s]
      \def\h{1}
      \def\w{1}
      \foreach \i/\j in {0/0}{
        \node (\i\j) at (\i,-\j) {1};
      }
      \draw (-0.5,0.5) -- (-0.5,-\h+0.5) -- (\w-0.5,-\h+0.5) -- (\w-0.5,0.5) -- (-0.5,0.5);
      \end{scope}
    \end{tikzpicture}
  \end{center}
  \caption{%
    A matrix contraction sequence of width 2 (i.e.\ the quotient matrices of a division sequence).
    Only 1-coefficients are represented in the matrices.
    At each step, rows or columns to be contracted are marked with thick lines.
  }
  \label{fig:matrix-contraction}
\end{figure}

\subsubsection{Grids in matrices}
For matrices, twin-width admits a dual characterisation by excluding \emph{grids}.
A $k$-grid in a matrix~$M$ is a $k \times k$-division in which all zones are non-zero,
or equivalently for which the quotient matrix has only `1' coefficients, see \cref{fig:grid}.
The \emph{grid number} of~$M$, denoted~$\gv(M)$, is the supremum of sizes of grids in~$M$.

\begin{figure}
  \begin{center}
    \begin{tikzpicture}[scale=0.4]
      \foreach \i/\j in {
        1/1,0/2,2/3,0/6,2/6,2/7,
        3/0,3/4,3/5,3/7,
        4/1,5/3,5/5,4/7,
        7/0,6/2,7/3,6/6,6/7
      }{
        \node (\i\j) at (\i,-\j) {1};
      }
      \foreach \x in {0,3,4,6,8}{
        \draw (\x-0.5,0.5) -- (\x-0.5,-7.5);
      }
      \foreach \y in {0,2,5,7,8}{
        \draw (-0.5,-\y+0.5) -- (7.5,-\y+0.5);
      }
    \end{tikzpicture}
  \end{center}
  \caption{%
    A matrix division inducing a 4-grid.
  }
  \label{fig:grid}
\end{figure}

\begin{remark}
  \label{rmk:grid-witness}
  Let~$M$ be an $X \times Y$ matrix.
  Then~$M$ contains a $k$-grid if and only if there exist $x_{i,j} \in X$, $y_{i,j} \in Y$ for $1 \le i,j \le k$ such that
  \begin{enumerate}
    \item For all indices~$i,j$, $M(x_{i,j}, y_{i,j}) = 1$
    \item For all indices~$i,i',j,j'$, if~$i < i'$ then~$x_{i,j} <_X x_{i',j'}$,
      and if~$j < j'$ then~$y_{i,j} <_Y y_{i',j'}$.
  \end{enumerate}
\end{remark}
Thus, $M$ has a $k$-grid if and only if it has a $k^2 \times k^2$-submatrix with a $k$-grid.
Hence the grid number of an infinite matrix is the supremum of the grid numbers of its finite submatrices.

Grid number and twin-width of matrices are related as follows.
\begin{theorem}[{Variant of \cite[Theorem~10]{twin-width1}}]
  \label{thm:grid-theorem-matrix}
  Fix~$d \in \Nn$. There exist functions $f,g : \Nn \to \Nn$,
  with~$f(n) = 2^{O(n)}$ and~$g(n) \sim 2n$,
  such that for any binary matrix~$M$ with degree at most~$d$,
  \begin{align*}
    \stww(M) & \le f\left(\gv(M)\right),~\text{and} \\
    \gv(M) & \le g\left(\stww(M)\right).
  \end{align*}
\end{theorem}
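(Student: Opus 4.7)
The theorem has two inequalities, and my plan is to prove each separately, adapting the corresponding part of \cite[Theorem~10]{twin-width1} to the strict twin-width setting. Both directions become more transparent under the bounded-degree hypothesis, since the ``mixed-cell count'' used in the original definition of twin-width coincides, up to a constant, with the matrix degree appearing in strict twin-width.

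For the lower bound $\gv(M) \le g(\stww(M))$, the plan is to fix a $k$-grid of $M$ witnessed by elements $x_{i,j}, y_{i,j}$ as in \cref{rmk:grid-witness}, follow an arbitrary division sequence $(\Xc_t,\Yc_t)$ of $M$, and locate one step whose quotient has large degree. I would track the counts $a_t$ of column-intervals of $\Xc_t$ meeting the witness set, and $b_t$ the analogous count for rows. These are nonincreasing, starting at $k^2$ and ending at $1$, with exactly one of them decreasing at each step of the sequence. Picking the coarsest step $t^\star$ at which both $a_{t^\star}, b_{t^\star} \ge k$ still hold, an interval-pigeonhole argument (using the strict order conditions from \cref{rmk:grid-witness}, which force the $k$ column-clusters and $k$ row-clusters of the grid to align with enough intervals) extracts a $\lceil k/2 \rceil \times \lceil k/2 \rceil$ all-ones submatrix of $M/(\Xc_{t^\star},\Yc_{t^\star})$. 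This submatrix exhibits a row of degree $\ge \lceil k/2 \rceil$ in that quotient, whence $\stww(M) \ge k/2$, yielding $g(n) \sim 2n$ asymptotically.

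For the upper bound $\stww(M) \le f(\gv(M))$, I would follow the Marcus--Tardos-based inductive strategy of \cite[Theorem~10]{twin-width1}. Setting $k = \gv(M) + 1$ so that $M$ has no $k$-grid, the plan is to greedily construct a division sequence of $M$ by iteratively merging adjacent column- or row-intervals while maintaining a target degree bound $D = f(k)$ on the evolving quotient. The key technical lemma, proved via Marcus--Tardos on an auxiliary ``conflict matrix'' indexed by adjacent interval pairs, would state: if the current quotient $M'$ has degree at most $D$ and no $k$-grid, then some adjacent column- or row-pair in $M'$ can be safely merged without pushing the degree above $D$. If no such safe merge existed, the conflict matrix would be dense enough that Marcus--Tardos produces a $k$-grid inside it, which lifts to a $k$-grid in $M$, contradicting $\gv(M) < k$. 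Iterating collapses $M$ to a single part, and the Marcus--Tardos constants give the $2^{O(k)}$ bound on $D$.

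The hard part will be the upper bound. The lower bound is essentially careful bookkeeping on intervals, whereas the upper bound requires rerunning the full Marcus--Tardos-style argument of \cite{twin-width1,twin-width4} in this variant setting, in particular verifying that a dense conflict matrix for adjacent-pair merges genuinely witnesses a grid in the original $M$, and that the $2^{O(k)}$ constant is preserved through the iteration. The bounded-degree hypothesis is used to bypass the ``mixed-cell'' and ``error value'' machinery of general twin-width, but does not alter the overall structure of the Marcus--Tardos reduction.
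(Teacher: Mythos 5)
The paper does not actually prove this statement: it cites \cite[Proposition~3.6, Theorem~4.1]{Guillemot14} for the permutation case and \cite[Theorem~10]{twin-width1} for the general one, noting in a footnote which pieces change. Your overall plan (proving the two inequalities separately, a pivot-step argument for $\gv\le g(\stww)$, and a Marcus--Tardos-driven greedy merging for $\stww\le f(\gv)$) is the same route, so the approach is right; but there are two concrete gaps. For the upper bound you assert that ``the Marcus--Tardos constants give the $2^{O(k)}$ bound on $D$.'' That is false with the original Marcus--Tardos constant, which is $2^{\Theta(k\log k)}$ and which yields only $f(n)=2^{O(n\log n)}$, exactly the bound stated in \cite[Theorem~4.1]{Guillemot14}. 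Reaching $f(n)=2^{O(n)}$ requires the later improvements \cite{Fox13,Cibulka16} showing the Marcus--Tardos constant is $2^{O(k)}$; the paper flags this explicitly in a footnote, and a self-contained proof must invoke or reprove it.

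For the lower bound, the bookkeeping you propose does not close. You track $a_t$, the number of column intervals meeting the $k^2$-point witness set, and pick the coarsest step with $a_{t^\star},b_{t^\star}\ge k$. But $a_{t^\star}\ge k$ does not force the $k$ grid column blocks to be separated by the interval partition: all of the surviving witness-meeting intervals could, for instance, sit inside $X_1$ while $X_2\cup\dots\cup X_k$ lies in a single coarse interval (and symmetrically for rows), in which case no row or column of the quotient need have degree anywhere near $k/2$, and the ``interval-pigeonhole'' you invoke has nothing to act on. The argument in the cited references instead tracks a quantity tied to how the division separates the grid blocks themselves---for example, how many grid blocks still contain a full interval, or the first step at which some interval straddles a grid-block boundary---and you need a step of that kind before any pigeonhole can be justified, both to get the grid aligned with the division and to obtain the stated $g(n)\sim 2n$ rather than a weaker linear bound.
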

For finite matrices, \cref{thm:grid-theorem-matrix} is a simple generalisation
of \cite[Proposition~3.6, Theorem~4.1]{Guillemot14}
from permutation matrices to matrices with bounded degree.\footnote{
  In \cite[Theorem~4.1]{Guillemot14}, the bound on~$f$ is~$2^{O(n \log n)}$.
  The bound~$f(n) = 2^{O(n)}$ follows from improvements~\cite{Fox13,Cibulka16}
  of Marcus Tardos Theorem~\cite{MarcusT04}, at the core of this proof.
}
It is also a simplified version of \cite[Theorem~10]{twin-width1}.\footnote{
  The main differences are the following.
  \begin{enumerate}
    \item \cite[Theorem~10]{twin-width1} considers non-strict twin-width,
      and excludes \emph{mixed minors} rather than grids, allowing dense graphs and matrices.
    \item In general, excluding a $k$-mixed minor yields a division sequence of width~$f(k)$,
      but with a different order of rows and columns.
      Only when excluding grids in bounded degree matrices can we ensure a division sequence for the same order.
    \item In the general case, the bound on~$f$ is doubly exponential.
  \end{enumerate}
}

For infinite matrices, the theorem should of course be understood as stating
`$\stww(M)$ is finite if and only if~$\gv(M)$ is finite, and in that case the stated inequalities hold'.
The infinite case follows from the finite one, since twin-width and grid number of infinite matrices
are obtained by taking the supremum over finite submatrices.
In the rest of this section, we will leave implicit this kind of considerations
for infinite graphs and matrices, when they pose no particular problem.

\subsubsection{Grid number of graphs}
\label{sec:grid-number-graph}
Given a finite graph~$G$ and a partition sequence of width~$k$,
one can always find a total order~$<$ on~$V(G)$ such that the partitions of that sequence only consist of intervals.
For this order, the partition sequence immediately translates
to a division sequence of width~$k+1$ in the adjacency matrix~$\adj{<}{G}$.
This suggests the following definition: The grid number of~$G$, denoted~$\gv(G)$,
is the minimum of~$\gv(\adj{<}{G})$ over all choices of~$<$ total order on~$V(G)$.

Similarly to matrices, the grid number of an infinite graph
is the supremum of grid number over finite induced subgraphs,
but here an additional argument is needed to order the infinite graph.
\begin{lemma}[{\cite[Proposition 8.1]{twin-width2}}]
  \label{lem:grid-compactness}
  For a graph~$G$ and~$k \in \Nn$, $\gv(G) \le k$ if and only if
  for all~$H$ finite induced subgraph, $\gv(H) \le k$.
\end{lemma}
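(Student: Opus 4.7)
The forward direction is immediate: if $<$ is a total order on $V(G)$ with $\gv(\adj{<}{G}) \le k$, then for any finite induced subgraph $H$, the restriction of $<$ to $V(H)$ yields an adjacency matrix $\adj{<}{H}$ which is a submatrix of $\adj{<}{G}$, so its grid number is at most $k$ as well. The real content is the converse, and the plan is to use \cref{lem:direct-limit-order} (the direct-limit lemma for orders) to glue together local witnesses into a single global order on $V(G)$.

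More precisely, assume that $\gv(H) \le k$ for every finite induced subgraph $H$ of $G$. View $G$ as the direct limit of the family of all its finite induced subgraphs, directed by inclusion. For each finite induced subgraph $H$, choose by hypothesis a total order $<_H$ on $V(H)$ such that $\gv(\adj{<_H}{H}) \le k$. Apply \cref{lem:direct-limit-order} to obtain a total order $<$ on $V(G)$ with the following coherence property: for every finite subset $X \subseteq V(G)$, there is a finite induced subgraph $H$ with $X \subseteq V(H)$ on which $<$ and $<_H$ agree.

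I claim this $<$ witnesses $\gv(G) \le k$. Suppose for contradiction that $\adj{<}{G}$ contains a $(k+1)$-grid. By \cref{rmk:grid-witness}, this grid is witnessed by a finite family of pairs $(x_{i,j}, y_{i,j})_{1 \le i,j \le k+1}$ satisfying certain ordering constraints with respect to $<$, together with $\adj{<}{G}(x_{i,j}, y_{i,j}) = 1$, i.e.\ $x_{i,j}$ and $y_{i,j}$ are adjacent in $G$. Let $X$ be the finite set of all these indices. By the coherence property of $<$, pick a finite induced subgraph $H \supseteq X$ on which $<_H$ coincides with $<$. Since $H$ is \emph{induced}, adjacency in $H$ matches adjacency in $G$ on $V(H)$; since $<_H$ agrees with $<$ on $X$, the same ordering inequalities hold. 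Hence the very same pairs $(x_{i,j}, y_{i,j})$ witness a $(k+1)$-grid inside $\adj{<_H}{H}$, so $\gv(H) \ge k+1$, contradicting our assumption.

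The only subtle point is extracting a coherent global order from the collection of local orders; this is exactly what \cref{lem:direct-limit-order} handles via its ultrafilter construction. Once that lemma is in hand, the rest of the argument is a straightforward pull-back of a finite grid witness to a finite induced subgraph, exploiting the fact that both adjacency and the ordering constraints are finite and local.
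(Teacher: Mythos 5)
Your proof is correct and follows essentially the same route as the paper: apply \cref{lem:direct-limit-order} to the directed family of finite induced subgraphs with their witnessing orders, obtain a coherent global order~$<$, and then use \cref{rmk:grid-witness} to pull back any hypothetical $(k+1)$-grid in~$\adj{<}{G}$ to a finite induced subgraph, contradicting the hypothesis. The only (minor) difference is that you spell out the trivial forward direction, which the paper leaves implicit.
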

\begin{proof}
  For each finite induced subgraph~$H$ of~$G$, fix an order~$<_H$ witnessing that~$H$ has grid number at most~$k$.
  Recall that~$G$ is direct limit of its finite induced subgraphs,
  hence by \cref{lem:direct-limit-order}, there is an order~$<$ on~$G$
  such that for any finite~$X \subset V(G)$, $<$ coincides with~$<_H$ on~$X$
  for some finite induced subgraph~$H$ containing~$X$.

  We claim that~$G$ has grid number at most~$k$ with regards to order~$<$.
  Indeed, suppose on the contrary that~$G$ has a~$(k+1)$-grid in this order.
  By \cref{rmk:grid-witness}, this grid is formed by just~$(k+1)^2$ elements of~$G$.
  Choose~$H$ finite subgraph such that~$<$ and~$<_H$ coincide on these elements.
  The same elements give a~$(k+1)$-grid for~$H$ ordered by~$<_H$, a contradiction.
\end{proof}

We can now adapt \cref{thm:grid-theorem-matrix} to graphs.
\begin{theorem}[{Variant of \cite[Theorem~14]{twin-width1}}]
  \label{thm:grid-theorem-graph}
  Fix $d \in \Nn$. There exist functions $f,g : \Nn \to \Nn$,
  with $f(n) = 2^{O(n)}$ and $g(n) \sim 2n$,
  such that for any graph $G$ with degree at most $d$,
  \begin{align*}
    \stww(G) \le f\left(\gv(G)\right),~\text{and} \\
    \gv(G) \le g\left(\stww(G)\right).
  \end{align*}
\end{theorem}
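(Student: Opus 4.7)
The plan is to deduce the statement from its matrix analogue \cref{thm:grid-theorem-matrix}, applied to adjacency matrices of~$G$. By \cref{lem:grid-compactness} together with the definition of twin-width of infinite graphs as a supremum over finite induced subgraphs, both inequalities reduce at once to the case where $G$ is a finite graph of maximum degree at most~$d$; in this setting it suffices to bound $\stww$ and $\gv$ of~$G$ against those of its adjacency matrix $\adj{<}{G}$ for a well-chosen order~$<$ on $V(G)$.

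For the bound $\gv(G) \le g(\stww(G))$, I will convert a partition sequence for $G$ into a division sequence for an adjacency matrix. Starting from an optimal partition sequence $\Pc_n, \dots, \Pc_1$ of~$G$ of width~$\stww(G)$, the parts appearing in the $\Pc_i$'s together form a laminar family on $V(G)$, so a depth-first traversal of its refinement tree yields a total order~$<$ on $V(G)$ for which every $\Pc_i$ is an interval partition. This order turns the partition sequence directly into a division sequence of $\adj{<}{G}$ of width at most $\stww(G)+1$, the $+1$ accounting for a possible diagonal entry in a quotient matrix when a part carries an internal edge. Applying the matrix theorem gives $\gv(G) \le \gv(\adj{<}{G}) \le g_{\mathrm{mat}}(\stww(G)+1)$, still asymptotic to $2\stww(G)$ as required.

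For the bound $\stww(G) \le f(\gv(G))$, I will go the other way. Choose an order~$<$ on $V(G)$ realising the minimum $\gv(G) = \gv(\adj{<}{G})$. Since $\adj{<}{G}$ has row and column degrees at most~$d$, \cref{thm:grid-theorem-matrix} yields a division sequence of $\adj{<}{G}$ of width $2^{O(\gv(G))}$. The main obstacle is that such a sequence may contract rows and columns independently, whereas a partition sequence of $V(G)$ corresponds to a \emph{symmetric} division sequence in which the same interval partition is used on both axes. I will resolve this by exploiting the symmetry of $\adj{<}{G}$: each row-only contraction can be paired with the mirror column-only contraction on the same interval, at most doubling the length of the sequence but leaving the width in $2^{O(\gv(G))}$, since in a symmetric matrix the row and column degrees of each intermediate quotient agree and the additional contraction only coarsens an already bounded-width quotient. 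The resulting symmetric division sequence induces a partition sequence of $V(G)$ whose quotient graphs have maximum degree at most the row degree of the associated quotient matrix, giving $\stww(G) \le 2^{O(\gv(G))}$. The symmetrisation argument is the delicate point; everything else is a routine translation between graphs and their adjacency matrices.
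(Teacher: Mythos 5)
Your reduction to the finite case via \cref{lem:grid-compactness} and the definition of $\stww$ for infinite graphs is correct, and your proof of the direction $\gv(G) \le g(\stww(G))$ is sound: the laminar-family/DFS ordering is exactly what the paper has in mind in \cref{sec:grid-number-graph}, and the $+1$ for a possible diagonal entry is right. You have also correctly isolated the real difficulty in the other direction, namely that \cref{thm:grid-theorem-matrix} returns a division sequence whose row and column interval partitions $\Xc_i$ and $\Yc_i$ may differ, whereas a partition sequence of $G$ corresponds to a \emph{symmetric} division. The paper's own proof handles this by re-running the Marcus--Tardos recursion of the matrix theorem so that it always splits rows and columns along the same intervals, producing a symmetric division from scratch; you instead try to symmetrise an already-given asymmetric sequence post hoc. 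That route can be made to work, but the two justifications you offer for it are both incorrect. First, when $\Xc_i \ne \Yc_i$ the quotient $M/(\Xc_i,\Yc_i)$ need not even be square, and the symmetry of $M$ gives no relation between the row degree of a part $X \in \Xc_i$ and the column degree of a part $Y \in \Yc_i$, so ``row and column degrees of each intermediate quotient agree'' does not hold. Second, coarsening a division can \emph{increase} the degree of a quotient (merging two parts can create a part whose neighbourhood is the union of their neighbourhoods), so ``the additional contraction only coarsens an already bounded-width quotient'' is not a bound at all. There is also the concrete obstruction that the ``mirror column contraction'' may simply not be available: the column partition may already have merged across or around the interval you want.

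A correct version of your symmetrisation is: set $\Zc_i \eqdef \Xc_i \wedge \Yc_i$, the common interval refinement. Since each step of the original sequence merges two consecutive intervals on one side only, and at most one part of the other side straddles that boundary, $\Zc_{i-1}$ is obtained from $\Zc_i$ by merging at most one pair of parts, so $(\Zc_n,\Zc_n),\dots,(\Zc_1,\Zc_1)$ is (after removing trivial steps) a genuine symmetric division sequence. Its width is at most $k^2$ where $k$ is the width of the original sequence: if $Z \subseteq X \cap Y$ and $Z' \subseteq X' \cap Y'$ with $M[Z \times Z'] \neq 0$, then $M[X \times Y'] \neq 0$ and, by symmetry of $M$, $M[X' \times Y] \neq 0$; this leaves at most $k$ choices for $Y'$ and at most $k$ for $X'$, and each pair $(X',Y')$ determines $Z' = X' \cap Y'$. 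Squaring $k = 2^{O(\gv(G))}$ is harmless. With this replacement your overall strategy goes through, but the argument as you wrote it is not a proof.
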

\begin{proof}
  In the finite case, this is a simple variant of \cref{thm:grid-theorem-matrix},
  where we preserve the symmetry between rows and columns in the adjacency matrix
  while building the division sequence (see~\cite[Theorem~14]{twin-width1}).
  \Cref{lem:grid-compactness} is used to prove the infinite case.
\end{proof}

\subsection{Some stability results}
\subsubsection{Substitutions in bijection matrices}
Let~$A,B$ be ordered sets, and $(X_a)_{a \in A}$, $(Y_b)_{b \in B}$ families of non-empty ordered sets.
Define the sums
\[ X \eqdef \biguplus_{a \in A} X_a,~\text{and}~Y \eqdef \biguplus_{b \in B} Y_b, \]
ordered in the natural way, i.e.~$a < a'$ in~$A$ implies~$X_a < X_{a'}$ in~$X$,
while the order inside each~$X_a$ is preserved, and similarly for~$Y$.

Let $f : A \to B$ be a bijection, and for each~$a \in A$ let $g_a : X_a \to Y_{f(a)}$ also be a bijection.
Then, the \emph{substitution} of~$a$ by~$g_a$ in~$f$
is the bijection $f[a := g_a]_{a \in A} : X \to Y$ obtained by combining the~$g_a$ along~$f$, i.e.
\[ \text{for $x \in A$, $y \in X_x$} \quad f[a := g_a]_{a \in A}(x,y) \eqdef (f(x), g_x(y)). \]
In terms of matrices, we also write $\bij f [(a,f(a)) := \bij{g_a}]$ for $\bij{f[a := g_a]}$,
and call it the substitution of~$(a,f(a))$ by~$\bij{g_a}$ in~$\bij f$.

A common special case of the above is when the maps~$g_a$ are all isomorphic to some~$g : C \to D$.
Then~$X = A \times C$, $Y = B \times D$ are ordered lexicographically,
and~$f[a := g]_{a \in A}$ is the product map~$(f,g)$,
while~$\bij{(f,g)} = \bij f \otimes \bij g$ is a tensor product, see \cref{fig:substitution}.

\begin{figure}
  \begin{center}
    \begin{tikzpicture}[scale=0.35]
      \begin{scope}[yshift=2cm]
      \foreach \i/\j in {0/0,1/3,2/1,3/2}{
        \node (\i\j) at (\i,-\j) {1};
      }
      \def\h{4}
      \def\w{4}
      \draw (-0.5,0.5) -- (-0.5,-\h+0.5) -- (\w-0.5,-\h+0.5) -- (\w-0.5,0.5) -- (-0.5,0.5);
      \end{scope}

      \node (p) at (4.5,0.5) {$\otimes$};

      \begin{scope}[yshift=1cm,xshift=6cm]
      \foreach \i/\j in {0/1,1/0}{
        \node (\i\j) at (\i,-\j) {1};
      }
      \def\h{2}
      \def\w{2}
      \draw (-0.5,0.5) -- (-0.5,-\h+0.5) -- (\w-0.5,-\h+0.5) -- (\w-0.5,0.5) -- (-0.5,0.5);
      \end{scope}

      \node (p) at (8.5,0.5) {$=$};

      \begin{scope}[yshift=4cm,xshift=10cm]
      \foreach \i/\j in {0/0,1/3,2/1,3/2}{
        \node (\i\j0) at (2*\i,-2*\j-1) {1};
        \node (\i\j1) at (2*\i+1,-2*\j) {1};
        \draw[thin,gray] (2*\i-0.5,-2*\j+0.5) -- (2*\i-0.5,-2*\j-1.5) -- 
                         (2*\i+1.5,-2*\j-1.5) -- (2*\i+1.5,-2*\j+0.5) -- (2*\i-0.5,-2*\j+0.5);
      }
      \def\h{8}
      \def\w{8}
      \draw (-0.5,0.5) -- (-0.5,-\h+0.5) -- (\w-0.5,-\h+0.5) -- (\w-0.5,0.5) -- (-0.5,0.5);
      \end{scope}
    \end{tikzpicture}
  \end{center}
  \caption{The tensor product corresponds to substituting each 1-coefficient of the left matrix by the right matrix.}
  \label{fig:substitution}
\end{figure}

\begin{lemma}
  \label{lem:tww-substitution}
  Twin-width is invariant by substitution in bijection matrices
  \[ \stww(\bij{f[a := g_a]_{a \in A}}) = \max\left(\stww(\bij f), \max_{a \in A} \stww(\bij{g_a})\right). \]
\end{lemma}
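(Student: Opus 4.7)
The plan is to prove the two inequalities separately. For the lower bound, observe that both $\bij f$ and each $\bij{g_a}$ appear as submatrices of $\bij{f[a:=g_a]}$: the restriction to $X_a \times Y_{f(a)}$ is literally~$\bij{g_a}$, while choosing any single point~$x_a \in X_a$ for each~$a$ and setting $y_b = g_{f^{-1}(b)}(x_{f^{-1}(b)})$ yields a submatrix on $\{x_a\}_{a\in A} \times \{y_b\}_{b\in B}$ isomorphic to~$\bij f$, thanks to the fact that all $\bij{g_a}$ are bijections. Since strict twin-width is non-increasing under taking submatrices, this gives the $\ge$ direction.

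For the upper bound in the finite case, I would build a division sequence of $\bij{f[a:=g_a]}$ in two phases. In Phase~1, process the blocks $X_a \uplus Y_{f(a)}$ one by one: for each~$a \in A$, replay the division sequence of $\bij{g_a}$ inside the interval $X_a \times Y_{f(a)}$, leaving everything else as singletons or previously-contracted blocks. Because each $X_a$ and each $Y_b$ is an interval in the ordered sum, all the parts produced this way remain intervals, and all contractions merge adjacent parts, so this is a legal division sequence. At the end of Phase~1 we reach the division $\Xc = \{X_a\}_{a\in A}$, $\Yc = \{Y_b\}_{b\in B}$, whose quotient matrix is exactly~$\bij f$. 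In Phase~2, replay the division sequence of~$\bij f$ on top of this division, each contraction merging the unions of adjacent $X_a$'s (resp.\ $Y_b$'s).

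The key verification is the width bound. At any stage of Phase~1, each current part is contained in a single $X_a$ or $Y_b$; a row-part $Y_j \subseteq Y_b$ has its only $1$-entries in $X_a \times Y_{f(a)}$ for $a = f^{-1}(b)$, so its degree in the quotient equals its degree in the corresponding partial quotient of $\bij{g_a}$, hence is at most $\stww(\bij{g_a})$. An untouched or fully-contracted row-part $Y_j = Y_b$ has degree exactly~$1$ for the same reason. In Phase~2, every part is a union of full blocks, so degrees in the quotient are exactly the degrees in the corresponding quotient of~$\bij f$, hence at most $\stww(\bij f)$. Combining, the width never exceeds $\max(\stww(\bij f),\, \max_a \stww(\bij{g_a}))$.

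For the infinite case, I would argue that any finite submatrix of $\bij{f[a:=g_a]}$ is contained in a submatrix obtained by restricting to a finite $A' \subseteq A$ and finite subsets $R_a \subseteq X_a$, $C_a \subseteq Y_{f(a)}$ (enlarged so that $g_a(R_a) = C_a$); this restricted matrix is itself the substitution of a finite subbijection of~$\bij f$ by finite subbijections of the $\bij{g_a}$'s. Applying the finite case and taking sup over such restrictions gives the upper bound; the sup is dominated by $\max(\stww(\bij f),\max_a\stww(\bij{g_a}))$ because strict twin-width of each factor is itself the sup over its own finite submatrices. The main subtlety is this bookkeeping for the infinite case and checking that Phase~1 and Phase~2 only contract adjacent intervals, which follows from the choice of the ordered-sum order on $X$ and~$Y$.
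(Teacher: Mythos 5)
Your proof is correct and takes essentially the same approach as the paper, which delegates the finite case to Guillemot--Marx (Proposition~3.4 there) and handles the infinite case by the same reduction to finite substitutions you describe. Your write-up is more explicit---spelling out the two-phase division sequence and the easy lower bound via submatrices---but the underlying argument is identical.
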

\begin{proof}
  In the finite case, this is \cite[Proposition~3.4]{Guillemot14}:
  in~$\bij{f[a := g_a]}$, one begins by regrouping each block~$\bij{g_a}$ in a single part
  using a division sequence for~$\bij{g_a}$.
  After this, the remaining quotient matrix is exactly~$\bij f$.

  The infinite case reduces to the finite one by remarking that any finite submatrix of~$\bij{f[a := g_a]}$
  is obtained by substituting finite submatrices of some~$\bij{g_a}$ into a finite submatrix of~$\bij f$.
\end{proof}

Because twin-width is defined through finite submatrices,
the above argument still holds for infinitely nested substitutions.
To avoid overwhelming notations, we will only use it for infinite tensor products.

\begin{lemma}
  \label{lem:tww-tensor-product}
  Let~$(I,<_I)$ be a well-ordered set of indices, and for each~$i \in I$
  consider $(X_i,<_{X_i})$, $(Y_i,<_{Y_i})$ two totally ordered sets, and $f_i : X_i \to Y_i$ a bijection.
  Consider the products $X \eqdef \prod_{i \in I} X_i$, $Y \eqdef \prod_{i \in I} Y_i$.
  They are totally ordered by lexicographic order, i.e.\ for~$x \neq x'$
  \[
    x <_X x' \quad \iff \quad x_i <_{X_i} x'_i
    \qquad \text{where } i \eqdef \min_{j \in I} x_j \neq x'_j.
  \]
  Let $f : X \to Y$ be the product of all~$f_i$. Then
  \[ \stww(\bij{f}) = \sup_{i \in I} \stww(\bij{f_i}). \]
\end{lemma}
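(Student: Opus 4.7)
The plan is to prove the two inequalities $\stww(\bij{f}) \ge \sup_i \stww(\bij{f_i})$ and $\stww(\bij{f}) \le \sup_i \stww(\bij{f_i})$ separately, reducing the infinite product to finite tensor products handled by \cref{lem:tww-substitution}. For the lower bound, fix $i_0 \in I$ and choose arbitrary default elements $\tilde{x}_j \in X_j$ for each $j \ne i_0$, setting $\tilde{y}_j = f_j(\tilde{x}_j)$. The map sending $a \in X_{i_0}$ to the element of $X$ with coordinate $a$ at position $i_0$ and $\tilde{x}_j$ elsewhere is an order-preserving injection (two distinct images differ only at index $i_0$), and the analogous map embeds $Y_{i_0}$ into $Y$. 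These two embeddings intertwine $f_{i_0}$ with $f$, so $\bij{f_{i_0}}$ appears as an ordered submatrix of $\bij{f}$, giving $\stww(\bij{f_{i_0}}) \le \stww(\bij{f})$.

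For the upper bound, set $c = \sup_i \stww(\bij{f_i})$, which we may assume finite. Since the twin-width of an infinite matrix is the supremum over finite submatrices, it suffices to show $\stww(N) \le c$ for an arbitrary finite submatrix $N = \bij{f}|_{X' \times Y'}$. The strategy is to realise $N$ as a submatrix of a finite tensor product $\bij{g_F}$, where $g_F = \prod_{i \in F} f_i \colon X_F \to Y_F$ for a suitable finite $F \subseteq I$: by iterated application of \cref{lem:tww-substitution}, $\stww(\bij{g_F}) = \max_{i \in F} \stww(\bij{f_i}) \le c$, and $N$ inherits this bound as a submatrix.

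The main task, and the only place where some care is required, is choosing $F$. Since $I$ is well-ordered, any two distinct elements of a product $\prod_{i \in I} Z_i$ have a well-defined first differing index. Let $F \subseteq I$ consist of the first differing indices of every pair of distinct elements in each of $X' \times X'$, $Y' \times Y'$, and the mixed set $\{(f(x), y) : x \in X',\ y \in Y',\ f(x) \ne y\}$. All three collections of pairs are finite, hence $F$ is finite. The first two conditions ensure that the projections $\pi_F \colon X' \to X_F$ and $\pi_F \colon Y' \to Y_F$ are injective and order-preserving, because under lexicographic order the relative order of two distinct elements is determined by their first differing index. The third condition is the subtle point: although elements of the ambient infinite product may differ at infinitely many indices, only the first differing index matters for each of the finitely many pairs $(f(x), y)$, so including these forces the implication $\pi_F(f(x)) = \pi_F(y) \Longrightarrow f(x) = y$. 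Thus the $1$-entries of $\bij{g_F}|_{\pi_F(X') \times \pi_F(Y')}$ coincide exactly with those of $N$ under the order-isomorphism induced by $\pi_F$, completing the reduction.
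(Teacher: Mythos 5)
Your proof is correct and, for the upper bound, takes essentially the same route as the paper's: reduce each finite submatrix of $\bij{f}$ to a finite tensor product $\bij{g_F}$ by projecting onto a finite set of coordinates determined by first-differing indices, then apply \cref{lem:tww-substitution}. You also supply the easy lower bound via coordinate embeddings with default values, which the paper leaves implicit. The one place you are more careful than the paper is including in $F$ the first-differing indices of the mixed pairs $(f(x),y)$; this makes $\pi_F$ carry $N$ exactly onto a submatrix of $\bij{g_F}$. The paper's set $J$ is built only from pairs inside $X'$ and inside $Y'$, so the projected pattern may contain extra $1$-entries, and the argument implicitly relies on the (true, but unstated there) fact that $\stww$ is non-increasing under deleting $1$-entries. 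Both choices work; your extra indices buy an exact identification at essentially no cost.
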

\begin{proof}
  Let~$M'$ be a finite submatrix of~$\bij{f}$ on rows~$X'$ and columns~$Y'$.
  It suffices to bound~$\stww(M')$ by the right-hand side of the lemma.
  For each pair of distinct elements of~$X'$ or~$Y'$, consider the minimal index in~$I$ on which these elements differ,
  and let~$J$ be the (finite) collection of all these indices.
  Define~$X_J = \prod_{j \in J} X_j$ and~$Y_J = \prod_{j \in J} Y_j$,
  equipped with the projections~$\pi_X : X \to X_J$ and~$\pi_Y : Y \to Y_J$.
  Consider the bijection $f_J : X_J \to Y_J$ obtained by product of~$(f_j)_{j \in J}$.
  By construction, $\pi_X$ is injective from~$X'$ to~$X_J$, and similarly for~$\pi_Y$.
  Furthermore, it is clear that if~$y = f(x)$, then~$\pi_Y(y) = f_J(\pi_X(x))$.
  Finally, if~$X_J$ and~$Y_J$ are again ordered lexicographically, $\pi_X$ and~$\pi_Y$ are non-decreasing.
  From this it follows that $\stww(M') \le \stww(\bij{f_J})$,
  and since~$\bij{f_J}$ is a finite tensor product of~$(\bij{f_j})_{j \in J}$,
  we have $\stww(\bij{f_J}) \le \sup_{i \in I} \stww(\bij{f_i})$ as desired.
\end{proof}

\subsubsection{Superposition}
Given~$M,N$ two~$X \times Y$ matrices, define their superposition~$M \lor N$ as the~$X \times Y$ matrix with
\[ (M \lor N)(x,y) \eqdef \max\left(M(x,y),N(x,y)\right). \]
A variant of the next lemma was recently observed \cite[Lemma 18]{twin-width8}.

\begin{lemma}
  \label{lem:grid-ramsey}
  If~$M_1,\dots,M_r$ are~$X \times Y$ matrices with~$\gv(M_i) < k$, then
  \[ \gv\left(\textstyle{\bigvee_{i=1}^r M_i} \right) < r^{rk}. \]
\end{lemma}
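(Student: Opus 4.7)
The plan is to reduce the statement to a bipartite Ramsey problem on a coloring of the zones of a large grid in the superposition. Set $N = r^{rk}$ and suppose for contradiction that $\gv(\bigvee_i M_i) \ge N$. By definition, $\bigvee_i M_i$ admits an $N$-grid, given by divisions $\Xc = \{X_1, \dots, X_N\}$ of $X$ and $\Yc = \{Y_1, \dots, Y_N\}$ of $Y$, with every zone $X_a \times Y_b$ containing a $1$-entry. For each zone $(a,b)$, the corresponding $1$ is contributed by at least one $M_i$, so I pick some $\phi(a,b) \in \{1,\dots,r\}$ such that $M_{\phi(a,b)}$ has a $1$ in $X_a \times Y_b$. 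This defines an $r$-coloring $\phi : \{1,\dots,N\}^2 \to \{1,\dots,r\}$.

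The combinatorial heart of the argument is a bipartite Ramsey-type claim: for $N = r^{rk}$, any $r$-coloring of $\{1,\dots,N\}^2$ contains a monochromatic $k \times k$ combinatorial rectangle, i.e.\ indices $a_1 < \dots < a_k$, $b_1 < \dots < b_k$, and $c^* \in \{1,\dots,r\}$ such that $\phi(a_p, b_q) = c^*$ for all $p,q$. Given this, the zones $X_{a_p} \times Y_{b_q}$ each contain a $1$ in $M_{c^*}$. I coarsen $\Xc$ into a $k$-partition $\Xc' = \{X'_1, \dots, X'_k\}$ of $X$ by grouping consecutive intervals, namely $X'_1 = X_1 \cup \dots \cup X_{a_1}$, $X'_p = X_{a_{p-1}+1} \cup \dots \cup X_{a_p}$ for $2 \le p \le k-1$, and $X'_k = X_{a_{k-1}+1} \cup \dots \cup X_N$, and symmetrically construct $\Yc'$. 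Each zone $X'_p \times Y'_q$ of $M_{c^*}$ then contains $X_{a_p} \times Y_{b_q}$ with a $1$, so $(\Xc',\Yc')$ is a $k$-grid in $M_{c^*}$, contradicting $\gv(M_{c^*}) < k$.

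For the Ramsey claim I plan to use iterated pigeonhole. Process the rows $1, 2, \dots$ in order while maintaining a set $A \subseteq \{1,\dots,N\}$ of surviving columns, starting from $A = \{1,\dots,N\}$: at each step the restriction of the current row to $A$ attains some majority color on at least $|A|/r$ columns, so I restrict $A$ to these columns and record that color. After $rk$ iterations one has $|A| \ge N/r^{rk} = 1$, together with a sequence of $rk$ recorded colors in which some $c^*$ appears at least $k$ times by pigeonhole: the $k$ corresponding rows are all colored $c^*$ on the surviving $A$, providing $k$ rows of color $c^*$ and at least one shared column. The main obstacle is upgrading ``at least one surviving column'' to ``at least $k$ surviving columns'', since both dimensions of the monochromatic rectangle must reach $k$. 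I plan to handle this with a symmetric second phase: once the $k$ rows of color $c^*$ are fixed, iterate the same pigeonhole scheme on columns, restricted to these rows, keeping at each step the columns on which the $k$ rows are still monochromatic of color $c^*$. The careful balancing of how many steps are performed in each of the two phases—rows versus columns—is precisely what yields the bound $r^{rk}$.
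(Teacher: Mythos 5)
Your reduction to the bipartite Ramsey claim is exactly the one in the paper: color each zone of a purported large grid in $\bigvee_i M_i$ by (any) index $i$ for which that zone is non-zero in $M_i$, extract a monochromatic $k\times k$ combinatorial rectangle, and pull it back to a $k$-grid in the corresponding $M_i$ by coarsening the division. That reduction and the coarsening step are both correct. Where you part ways from the paper is that the paper simply cites the monochromatic-rectangle claim as a classical Ramsey fact with no proof, whereas you try to prove it by iterated pigeonhole; the comparison below is therefore about your proof of the claim, not the reduction.

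The one-pass pigeonhole argument is the right tool, but the proposed ``second phase'' does not repair the gap you identify. After the first phase fixes the $k$ rows recorded with color $c^*$, the set $A$ of surviving columns is, by construction, exactly the set of columns on which \emph{all} processed rows carry their recorded color; in particular, every column of $A$ already has the chosen $k$ rows colored $c^*$. A second pigeonhole pass over columns ``restricted to these rows'' can only filter $A$ further -- it can never recover columns that were discarded in phase one -- so no balancing of steps between phases can enlarge $|A|$. The actual fix is to process fewer rows in the single pass: with $m = r(k-1)+1$ rows, pigeonhole already guarantees some color occurs at least $k$ times, and $|A|\geq\lceil \ell/r^m\rceil$; requiring $\ell > (k-1)\,r^{r(k-1)+1}$ then yields $|A|\geq k$. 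This gives the monochromatic $k\times k$ rectangle with a bound of roughly $(k-1)\,r^{rk-r+1}$, which coincides with $r^{rk}$ only when $k\leq r^{r-1}$; it is slightly weaker in general, but any such single-exponential-in-$rk$ bound is entirely sufficient for every use of the lemma in the paper (which only needs \emph{some} finite bound on the grid number of a superposition). So your overall strategy is sound and matches the paper's, but you should drop the two-phase idea, tighten the one-pass argument as above, and either accept the marginally different constant or simply state the Ramsey claim as classical, as the paper does.
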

\begin{proof}
  A classical Ramsey theory argument gives the following.
  \begin{claim}
    For any~$k \in \Nn$, fix~$\ell = r^{rk}$.
    Then for any~$\ell \times \ell$ matrix~$M$ whose coefficients are colors in~$\{1,\dots,r\}$,
    there exists a~$k \times k$ submatrix of~$M$ consisting of only one color.
  \end{claim}

  Now assume that~$\bigvee_{i=1}^r M_i$ contains a~$\ell$-grid.
  Consider the quotient matrix corresponding to this grid,
  and color a zone~$Z$ with the minimum~$i$ such that~$Z$ is non-zero in~$M_i$.
  There must be such an~$i$ for each zone since all zones are non-zero in the superposition of all~$M_i$.
  The claim then gives a~$k \times k$ submatrix with a single color~$i$, which induces a~$k$-grid in~$M_i$.
\end{proof}

\subsubsection{Bounded quotients}
\begin{lemma}
  \label{lem:graph-bounded-quotient}
  Let~$G$ be a graph and~$\Pc$ a partition of~$V(G)$ such that every part~$P \in \Pc$ has size~$\card{P} \le k$.
  Then
  \[ \stww(G) \le k \stww(G / \Pc). \]
\end{lemma}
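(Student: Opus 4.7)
The plan is to construct a partition sequence for~$G$ in two phases. Set $t \eqdef \stww(G/\mathcal{P})$ and fix an optimal partition sequence $\mathcal{R}_n, \dots, \mathcal{R}_1$ of~$G/\mathcal{P}$ of width~$t$. The first phase starts from the singleton partition of~$V(G)$ and merges vertices inside each part of~$\mathcal{P}$ (in some order) until reaching the partition~$\mathcal{P}$ itself. The second phase lifts~$(\mathcal{R}_i)$ from $\mathcal{P}$ to $V(G)$ to continue coarsening down to~$\{V\}$.

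The lifting is straightforward: for each~$i$, define~$\tilde{\mathcal{R}}_i$ to be the partition of~$V(G)$ whose classes are the unions $\bigcup_{P \in R} P$, one for each $R \in \mathcal{R}_i$. The correspondence $R \mapsto \bigcup R$ is a graph isomorphism between $(G/\mathcal{P})/\mathcal{R}_i$ and $G/\tilde{\mathcal{R}}_i$, since adjacency in both graphs is equivalent to the existence of a $G$-edge between the corresponding subsets of~$V(G)$. Hence the end-segment $\tilde{\mathcal{R}}_n = \mathcal{P}, \dots, \tilde{\mathcal{R}}_1 = \{V\}$ has all quotient graphs of maximum degree at most~$t$.

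The nontrivial step is bounding the degree during the initial phase. At any intermediate partition~$\mathcal{Q}$ refining~$\mathcal{P}$, consider a class $Q \subseteq P$: its neighbors in~$G/\mathcal{Q}$ are of two kinds. First, at most $|P| - 1 \le k - 1$ other classes contained in~$P$. Second, classes $Q' \subseteq P'$ with~$P' \ne P$ and $P' \sim P$ in~$G/\mathcal{P}$; since $\Delta(G/\mathcal{P}) \le t$, there are at most~$t$ such~$P'$, each split into at most~$k$ classes of~$\mathcal{Q}$, contributing at most~$kt$ external neighbors. The concatenated sequence thus achieves width at most $\max(t, kt + k - 1)$.

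The main obstacle is shaving the residual additive $k - 1$ to reach the announced bound of exactly~$kt$. I would attempt this by interleaving the two phases, scheduling the internal contractions within a part~$P$ only after~$P$ has been merged at the coarser level with one of its $\mathcal{P}$-neighbors: the now-absorbed external slot can then accommodate the $k - 1$ internal competitors of~$Q \subsetneq P$. Carrying out this interleaving consistently for all parts of~$\mathcal{P}$, while keeping the quotient degree controlled throughout, is the technical heart of the argument and what I would spend the most effort getting right.
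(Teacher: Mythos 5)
Your two-phase construction (first contract within each part of $\Pc$ to reach $\Pc$, then lift a partition sequence of $G/\Pc$) is exactly the paper's approach, and your degree count is \emph{more} careful than the paper's. Where the paper asserts ``$\Delta(G/\Pc') \le k\Delta(G/\Pc)$'' for the intermediate partitions~$\Pc'$, it silently drops the up-to-$k-1$ internal neighbours inside the same $\Pc$-part, which your analysis correctly retains; the honest intermediate bound is $\Delta(G/\Pc') \le k\Delta(G/\Pc) + k - 1$, giving overall width $\max\bigl(t,\ kt + k - 1\bigr)$.

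You should \emph{not} spend effort interleaving the phases to shave the residual $k-1$: the stated inequality $\stww(G) \le k\,\stww(G/\Pc)$ is in fact slightly off, and $k-1$ is genuinely there. Take $G = K_{(d+1)k}$ with $\Pc$ a partition into $d+1$ parts of size $k$. Then $G/\Pc = K_{d+1}$, so $\stww(G/\Pc) = d$, while $\stww(G) = \Delta(G) = (d+1)k - 1 = k\,\stww(G/\Pc) + k - 1$. Already with $d=1$, $k=2$ (i.e.\ $K_4$ split into two pairs) one gets $\stww(K_4) = 3 > 2 = k\,\stww(K_2)$. So the tight form of the lemma is $\stww(G) \le k\,\stww(G/\Pc) + k - 1$, which your proof delivers directly, with no interleaving needed. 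None of the paper's applications of the lemma are affected, since they only require $\stww(G)$ to be bounded by \emph{some} function of $k$ and $\stww(G/\Pc)$; but you were right to be suspicious of the exact constant, and the gap you identified points at a genuine (if harmless) slip in the paper rather than a gap in your own argument.
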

\begin{proof}
  Given a partition sequence for~$G / \Pc$ of width~$t$,
  it can be extended to a partition sequence for~$G$ by
  first going from the partition in singletons of~$G$ to~$\Pc$ in an arbitrary manner.
  During this process, the quotient graphs are of the form~$G / \Pc'$,
  where every part of~$\Pc'$ is contained in some part of~$\Pc$.
  Since every~$P \in \Pc$ has size at most~$k$,
  it contains at most~$k$ distinct parts of~$\Pc'$,
  which clearly implies that~$\Delta(G / \Pc') \le k \Delta(G / \Pc)$.
  It follows that
  \[ \stww(G) \le \max\left( \stww(G / \Pc) , k \Delta(G / \Pc) \right). \]
  and the right-hand side is bounded by~$k \stww(G / \Pc)$.
\end{proof}

\subsubsection{First-order interpretations}
\label{sec:interpretation}
If~$\Cc$ is a class of finite graphs of bounded twin-width,
then any first-order transduction of~$\Cc$ also has bounded twin-width~\cite[Theorem~39]{twin-width1}.
However, this result only holds with non-strict twin-width,
because first-order transductions can produce dense graphs from graphs of bounded degree.
In our setting, we will only consider two special cases of interpretations which preserve bounded degree:
powers of graphs, and composition of permutation matrices.
We give simpler ad hoc proofs for these cases.

Let~$G$ be a graph. The $k$-th power of~$G$, denoted by~$G^{(k)}$, is the graph with the same vertices as~$G$,
and such that~$x \neq y$ are adjacent in~$G^{(k)}$ if and only if~$d_G(x,y) \le k$.
Remark that
\begin{equation}
  \Delta(G^{(k)}) \le (\Delta(G))^k.
  \label{eq:degree-power}
\end{equation}
\begin{lemma}
  \label{lem:power-graph}
  For any~$k \in \Nn$, and any graph~$G$,
  \[ \stww(G^{(k)}) \le \stww(G)^k. \]
\end{lemma}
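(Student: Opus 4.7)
The plan is to reuse a partition sequence for $G$ as a partition sequence for $G^{(k)}$, and control the blow-up of width by comparing the quotients of both graphs.

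First I would reduce to the case where $G$ is finite. The bound is trivial if $\stww(G) = \infty$, so assume $\stww(G)$ is finite; since $\stww(G) \ge \Delta(G)$, this forces $G$ to be locally finite. For any finite $S \subseteq V(G)$, the closed ball of radius $k$ around $S$ in $G$ is a finite set $T \subseteq V(G)$, and any path of length at most $k$ in $G$ between two vertices of $S$ stays inside $T$. Consequently $G^{(k)}[S] = (G[T])^{(k)}[S]$, and since strict twin-width is non-increasing under subgraphs, $\stww(G^{(k)}[S]) \le \stww((G[T])^{(k)})$. Taking the supremum over $S$, it suffices to prove the inequality when $G$ is finite.

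Assume now that $G$ is finite, and fix a partition sequence $\Pc_n, \dots, \Pc_1$ for $G$ of width $\stww(G)$. Because $V(G^{(k)}) = V(G)$, the very same sequence is a partition sequence for $G^{(k)}$. The key observation is that, for every partition $\Pc$, the quotient $G^{(k)}/\Pc$ is a subgraph of $(G/\Pc)^{(k)}$: if $X \neq Y$ are adjacent in $G^{(k)}/\Pc$, then $d_G(x,y) \le k$ for some $x \in X$, $y \in Y$, and projecting a shortest $xy$-path through $\Pc$ yields a walk of length at most $k$ in $G/\Pc$ from $X$ to $Y$. Combined with \cref{eq:degree-power}, this yields
\[
\Delta(G^{(k)}/\Pc_i) \;\le\; \Delta\bigl((G/\Pc_i)^{(k)}\bigr) \;\le\; \Delta(G/\Pc_i)^k \;\le\; \stww(G)^k
\]
for every~$i$. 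Therefore the same partition sequence has width at most $\stww(G)^k$ when viewed in~$G^{(k)}$, which gives the claimed bound.

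There is no serious obstacle here: the entire argument rests on the elementary fact that taking $k$-th powers and taking quotients commute up to inclusion. The only mild subtlety is the reduction from infinite to finite graphs, which is handled cleanly by using that bounded strict twin-width forces local finiteness, so that closed balls are finite.
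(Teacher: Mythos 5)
Your argument is essentially identical to the paper's: both rest on the key observation that $G^{(k)}/\Pc$ is a subgraph of $(G/\Pc)^{(k)}$, so that a partition sequence of width $t$ for $G$ is also a partition sequence of width at most $t^k$ for $G^{(k)}$ via the bound $\Delta\bigl((G/\Pc)^{(k)}\bigr) \le \Delta(G/\Pc)^k$. The one thing you make explicit is the reduction from infinite to finite graphs (via local finiteness and $k$-balls), which the paper treats implicitly under its earlier blanket remark about infinite graphs and matrices; your reduction is correct and a welcome clarification.
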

\begin{proof}
  We prove the finite case.
  If~$\Pc$ is a partition of~$V(G)$, one verifies that
  \begin{equation}
    G^{(k)} / \Pc \quad \text{is a subgraph of} \quad (G / \Pc)^{(k)}.
    \label{eq:power-quotient-subgraph}
  \end{equation}
  Indeed, for~$X,Y \in \Pc$, there is an edge~$XY$ in~$G^{(k)} / \Pc$ if and only if
  there are~$x \in X$, $y \in Y$ and a path of length at most~$k$ in~$G$ from~$x$ to~$y$.
  When that is the case, the parts containing the successive vertices of this path
  yield a path from~$X$ to~$Y$ of length at most~$k$ in~$G / \Pc$, see \cref{fig:quotient-square}.
  \begin{figure}
    \begin{center}
      \begin{tikzpicture}
      \tikzstyle{every node}=[normalnode,label distance=0.3cm]
      \node[label=below:$a_1$] (a1) at (0,0) {};
      \node[label=above:$a_2$] (a2) at (0,1) {};
      \node[label=below:$d$] (d) at (1.5,0) {};
      \node[label=above:$e$] (e) at (1.5,1) {};
      \node[label=right:$B$,label=below:$b$] (B) at (3,0) {};
      \node[label=right:$C$,label=above:$c$] (C) at (3,1) {};

      \draw (a1) -- (d) -- (a2);
      \draw (B) -- (d);
      \draw (C) -- (e);
      \draw (0,0.5) node [draw=none, fill=none, label=left:$A$] {} ellipse (0.25 and 0.8);
      \draw (1.5,0.5) ellipse (0.25 and 0.8);
      \draw (B) circle (0.3);
      \draw (C) circle (0.3);
      \end{tikzpicture}
    \end{center}
    \caption{%
      A graph~$G$ with a partition~$\Pc$.
      In both~$G^{(2)}/\Pc$ and~$(G/\Pc)^{(2)}$, parts~$A$ and~$B$ are adjacent because of the path~$a_1,d,b$.
      In~$(G/\Pc)^{(2)}$ only, $A$ and~$C$ are adjacent thanks to the path~$a_1,d,e,c$,
      which can `jump' from~$d$ to~$e$ since they are in the same part of~$\Pc$.
    }
    \label{fig:quotient-square}
  \end{figure}
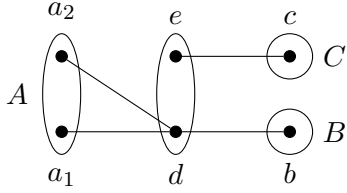

  Now if~$\Pc_n,\dots,\Pc_1$ is a partition sequence of width~$t$ for~$G$,
  then~$(G / \Pc_i)^{(k)}$ has degree bounded by~$t^k$ by~\cref{eq:degree-power}, hence so does~$G^{(k)} / \Pc_i$.
  Thus~$\Pc_n,\dots,\Pc_1$ is also a partition sequence of width at most~$t^k$ for~$G^{(k)}$.
\end{proof}

\begin{lemma}
  \label{lem:perm-composition}
  If $\sigma : X \to Y$, $\tau : Y \to Z$ are bijections between totally ordered sets
  with $\stww(\bij{\sigma}),\stww(\bij{\tau}) \le t$, then
  \[ \stww(\bij{\tau \circ \sigma}) = 2^{O(t)}. \]
\end{lemma}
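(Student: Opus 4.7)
The plan is to invoke \cref{thm:grid-theorem-matrix} in both directions. From $\stww(\bij\sigma), \stww(\bij\tau) \le t$ we obtain $\gv(\bij\sigma), \gv(\bij\tau) \le g(t) = O(t)$, and I set $k \eqdef \max(\gv(\bij\sigma), \gv(\bij\tau)) = O(t)$. The main task is then to establish a \emph{linear} bound $\gv(\bij{\tau \circ \sigma}) = O(k) = O(t)$; once this is done, \cref{thm:grid-theorem-matrix} applied in the reverse direction yields $\stww(\bij{\tau \circ \sigma}) \le f(O(t)) = 2^{O(t)}$. Since grid numbers of infinite matrices are suprema over finite submatrices, it suffices to work with finite bijections.

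To bound $\gv(\bij{\tau \circ \sigma})$, suppose for contradiction that it contains an $\ell$-grid with $\ell$ suitably larger than $k$. By \cref{rmk:grid-witness}, this grid is witnessed by pairs $(x_{i,j}, z_{i,j})_{1 \le i,j \le \ell}$ with $\tau(\sigma(x_{i,j})) = z_{i,j}$ and the standard monotonicity: $i < i'$ forces $x_{i,j} <_X x_{i',j'}$, and $j < j'$ forces $z_{i,j} <_Z z_{i',j'}$. Introducing the intermediate points $y_{i,j} \eqdef \sigma(x_{i,j}) = \tau^{-1}(z_{i,j}) \in Y$, the strategy is to read off, from the arrangement of the $\ell^2$ values $y_{i,j}$ in the order on $Y$, a large grid in either $\bij\sigma$ or $\bij\tau$. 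Concretely, a $(k{+}1)$-grid in $\bij\sigma$ is witnessed by choosing row indices $i_1 < \dots < i_{k+1}$ together with a partition of $Y$ into $k+1$ intervals such that every row set $\{y_{i_u,j} : 1 \le j \le \ell\}$ meets every interval; symmetrically, a $(k{+}1)$-grid in $\bij\tau$ comes from column indices $j_1 < \dots < j_{k+1}$ and a $Y$-partition hitting every column set $\{y_{i,j_v} : 1 \le i \le \ell\}$.

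A pigeonhole or greedy argument on the $Y$-positions of the $y_{i,j}$'s then gives a dichotomy: either the $y_{i,j}$'s are sufficiently interleaved along rows in $Y$-order to yield the $\sigma$-side partition, or they cluster in rows and must therefore interleave along columns, yielding the $\tau$-side partition. Either way, we obtain a $(k{+}1)$-grid in $\bij\sigma$ or $\bij\tau$, contradicting the choice of $k$. The main obstacle is arranging this dichotomy with $\ell$ only \emph{linear} in $k$: anything weaker (polynomial or exponential) would, after composing with the exponential in \cref{thm:grid-theorem-matrix}, push the final bound on $\stww(\bij{\tau \circ \sigma})$ past $2^{O(t)}$. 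The underlying combinatorics parallels the composition result of Guillemot and Marx~\cite{Guillemot14} for finite bounded-width permutations, and extends from finite permutation matrices to our bijections between totally ordered sets by the same finite-submatrix reduction used in \cref{lem:tww-substitution}.
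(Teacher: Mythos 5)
Your route differs fundamentally from the paper's. You propose to prove a \emph{linear} bound $\gv(\bij{\tau\circ\sigma}) = O(\gv(\bij\sigma) + \gv(\bij\tau))$ on grid numbers and then apply \cref{thm:grid-theorem-matrix} once in the reverse direction. The paper never attempts to bound the grid number of the composition at all. Instead it forms the $(X \uplus Z) \times Y$ matrix $M$ by placing $\bij\sigma$ and the transpose of $\bij\tau$ side by side, observes $\gv(M) \le 2t'+1$ with $t' = O(t)$, extracts a division sequence of width $2^{O(t)}$ for $M$ via \cref{thm:grid-theorem-matrix}, and then \emph{restricts} the row divisions to $X$ and to $Z$ to obtain a division sequence for $\bij{\tau\circ\sigma}$. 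Its width is bounded by the square of the width for $M$, because the neighbour relation from an $X$-block to a $Z$-block composes through a shared $Y$-column block, and that square is still $2^{O(t)}$. The whole argument lives at the level of division sequences; no bound on $\gv(\bij{\tau\circ\sigma})$ is produced or needed.

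The gap in your plan is exactly the step you flag yourself: the linear grid-number bound for the composition. You describe a ``pigeonhole or greedy'' dichotomy on the intermediate points $y_{i,j}$ but do not carry it out, and it is not clear it can be done with only linear loss. In fact grid numbers plausibly compose \emph{multiplicatively}: a block-product arrangement of the $y_{i,j}$'s, in which the $\ell$ rows cluster into $k$ consecutive super-blocks of $Y$ while the columns spread across $k$ blocks spaced $k$ apart, realises an $\ell$-grid in $\bij{\tau\circ\sigma}$ with $\ell \approx k^2$ while neither $\bij\sigma$ nor $\bij\tau$ is forced beyond grids of size roughly $k$. If so, your dichotomy would at best give $\gv(\bij{\tau\circ\sigma}) = O(t^2)$, hence $\stww(\bij{\tau\circ\sigma}) = 2^{O(t^2)}$, short of the claimed $2^{O(t)}$. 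Note also that the paper's own proof yields no better than $\gv(\bij{\tau\circ\sigma}) = 2^{O(t)}$ after the fact; the side-by-side construction is precisely the device that sidesteps having to control this grid number tightly.
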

\begin{proof}
  By \cref{thm:grid-theorem-matrix}, assume that $\gv(\bij\sigma), \gv(\bij\tau) \le t'$ with~$t' = O(t)$.
  We order~$X \uplus Z$ with~$X < Z$, and consider the $(X \uplus Z) \times Y$ matrix~$M$
  consisting of~$\bij\sigma$ and~$\bij{\tau^{-1}}$ put side by side
  (remark that~$\bij{\tau^{-1}}$ is the transpose of~$\bij\tau$).
  Clearly, $\gv(M) \le 2t'+1$, hence by \cref{thm:grid-theorem-matrix},
  there is a division sequence $(\Rc_n,\Cc_n), \dots, (\Rc_1,\Cc_1)$ of~$M$ of width~$2^{O(t)}$.

  Let us construct a division sequence $(\Xc_n,\Zc_n), \dots, (\Xc_1,\Zc_1)$ for~$\bij{\tau \circ \sigma}$.
  The partition~$\Xc_i$ of~$X$ into intervals is obtained by restricting~$\Rc_i$ to~$X$, i.e.
  \[ \Xc_i \eqdef \setst{R \cap X}{R \in \Rc_i \land R \cap X \neq \emptyset}. \]
  The partition~$\Zc_i$ of~$Z$ is defined similarly.
  It is easy to verify that for any~$i$, either $(\Xc_i,\Zc_i) = (\Xc_{i+1},\Zc_{i+1})$,
  or~$(\Xc_i,\Zc_i)$ is obtained by merging two parts in either~$\Xc_{i+1}$ or~$\Zc_{i+1}$.
  Hence, up to some trivial cleanup, this is a division sequence for~$\bij{\tau \circ \sigma}$.

  We need to bound the degree of the quotient matrices $\bij{\tau \circ \sigma} / (\Xc_i,\Zc_i)$.
  Let~$X' \in \Xc_i$ be a part, obtained as~$X' = X \cap R$ for some~$R \in \Rc_i$.
  Say that~$Y' \in \Cc_i$ is a neighbour of~$X'$ if there exists~$x \in X'$ with~$f(x) \in Y'$.
  The number of neighbours of~$X'$ in~$\Cc_i$ is bounded by the degree of~$R$ in~$M / (\Rc_i,\Cc_i)$.
  Similarly, for~$Y' \in \Cc_i$, say that~$Z' \in \Zc_i$ is a neighbour of~$Y'$
  if there exists~$y \in Y'$ with~$g(y) \in Z'$.
  Again, the number of neighbours of~$Y'$ in~$\Zc_i$ is bounded by the degree of~$Y'$ in~$M / (\Rc_i,\Cc_i)$.
  Now, if~$Z' \in \Zc_i$ is such that there exists~$x \in X'$ with~$g(f(x)) \in Z'$,
  then~$Z'$ is a neighbour in~$\Zc_i$ of a neighbour in~$\Cc_i$ of~$X'$.
  It follows that the degree of~$X'$ in~$\bij{\tau \circ \sigma} / (\Xc_i,\Zc_i)$
  is bounded by the square of the maximum degree in~$M / (\Rc_i,\Cc_i)$, which is still~$2^{O(t)}$.
  The same holds for the degree of a part~$Z' \in \Zc_i$.

  Thus we have constructed a division sequence for~$\bij{\tau \circ \sigma}$ of width~$2^{O(t)}$.
\end{proof}

The former proof could be adapted to show that the composition of~$k$
bijection matrices of twin-width~$t$ has twin-width~$2^{O(tk)}$,
instead of the tower of exponentials obtained repetitively applying the former lemma.

\section{Twin-width as a geometric group invariant}
\label{sec:geometric-tww}
In \cite[Lemma~8.2]{twin-width2}, we already proved finite twin-width is a group invariant.
This can be stated as a more general result on regular embeddings.
\begin{lemma}
  \label{lem:tww-regular-embedding}
  If $f : H \to G$ is a $\lambda$-regular embedding, then
  \[ \stww(H) \le \lambda \stww(G)^\lambda. \]
\end{lemma}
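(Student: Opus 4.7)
The plan is to combine the two earlier stability results: the power graph bound (\cref{lem:power-graph}) and the bounded-quotient bound (\cref{lem:graph-bounded-quotient}), using the fiber structure of the regular embedding as the partition.

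First I would set $t \eqdef \stww(G)$ and look at the power $G^{(\lambda)}$, whose strict twin-width is at most $t^\lambda$ by \cref{lem:power-graph}. The key observation is that the map~$f$ turns $H$ into something that fits inside~$G^{(\lambda)}$ after quotienting by fibers. Define the partition $\Pc$ of~$V(H)$ whose parts are the non-empty fibers $f^{-1}(y)$ for $y \in f(H)$. By the assumption $\card{f^{-1}(y)} \le \lambda$, every part has size at most~$\lambda$.

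Next I would identify the quotient $H/\Pc$ with a subgraph of $G^{(\lambda)}$. Its vertex set is in bijection with $f(H) \subseteq V(G)$. If two distinct parts $f^{-1}(y_1), f^{-1}(y_2)$ are adjacent in $H/\Pc$, there exist $x_i \in f^{-1}(y_i)$ with $x_1 x_2 \in E(H)$; the $\lambda$-Lipschitz condition then gives $d_G(y_1,y_2) = d_G(f(x_1),f(x_2)) \le \lambda$, so $y_1 y_2$ is an edge of $G^{(\lambda)}$. Hence $H/\Pc$ embeds as a subgraph of $G^{(\lambda)}$, and since strict twin-width does not increase when passing to subgraphs, $\stww(H/\Pc) \le \stww(G^{(\lambda)}) \le t^\lambda$.

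Finally I would apply \cref{lem:graph-bounded-quotient} to the partition~$\Pc$, whose parts have size bounded by~$\lambda$, to conclude
\[ \stww(H) \le \lambda \cdot \stww(H/\Pc) \le \lambda \cdot t^\lambda = \lambda \stww(G)^\lambda. \]
In the infinite case, the same argument applies to any finite induced subgraph $H' \subseteq H$ together with the finite induced subgraph of $G$ on $f(V(H'))$, since~$f$ restricts to a $\lambda$-regular embedding, and one takes the supremum.

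There is no real obstacle here; the only subtle point is verifying that the quotient $H/\Pc$ sits inside $G^{(\lambda)}$ rather than just in some power of it, which is immediate from the $\lambda$-Lipschitz assumption but is the step that dictates the exponent~$\lambda$ in the final bound.
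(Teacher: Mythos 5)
Your proposal is correct and follows essentially the same route as the paper's own proof: it uses the fiber partition~$\Pc$ of bounded size, observes via the Lipschitz condition that~$H/\Pc$ is a subgraph of~$G^{(\lambda)}$, and then combines \cref{lem:graph-bounded-quotient} and \cref{lem:power-graph} to obtain the bound. The only stylistic difference is that you spell out the verification that~$H/\Pc$ sits inside~$G^{(\lambda)}$, which the paper leaves implicit.
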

\begin{proof}
  Let $\Pc = \setst{f^{-1}(v)}{v \in V(G)}$ be the preimage partition of~$V(H)$.
  Since~$f$ is $\lambda$-regular, parts of~$\Pc$ have size at most~$\lambda$.
  One can see~$f$ as a map $f : H / \Pc \to G$, which is injective and $\lambda$-Lipschitz.
  This implies that~$H / \Pc$ is isomorphic to a subgraph of~$G^{(\lambda)}$.
  Then, by \cref{lem:graph-bounded-quotient,lem:power-graph},
  \[ \stww(H) \le \lambda \stww(H / \Pc) \le \lambda \stww(G)^\lambda. \]
\end{proof}

Since quasi-isometries between graphs of bounded degree are regular embeddings,
it follows that having finite twin-width is invariant under quasi-isometry for graphs of bounded degree.
In particular, whether or not a Cayley graph~$\cay(\Gamma,S)$ has finite twin-width
depends only on the group~$\Gamma$, not on the generating set~$S$.
This defines the notion of finite twin-width for finitely generated groups.

Let us give some simple examples.
\begin{enumerate}
  \item \label{item:finite} Finite groups trivially have finite twin-width.
  \item \label{item:free} It is easy to see that the strict twin-width of a tree is exactly its maximum degree.
    Thus, the free group with~$k$ generators has finite twin-width, since it admits the $k$-regular tree as Cayley graph.
  \item \label{item:product} A cartesian product of groups corresponds to the cartesian product of Cayley graphs,
    which is known to preserve finite strict twin-width (see~\cite[Theorem~7.2]{twin-width2}).
  \item By~\eqref{item:finite} and~\eqref{item:free}, all cyclic groups have finite twin-width,
    and thus by~\eqref{item:product}, all finitely generated abelian groups have finite twin-width.
\end{enumerate}

Finite twin-width being closed under quasi-isometry,
it is natural to wonder if other well-known properties of groups preserved by quasi-isometry imply finite twin-width.
Hyperbolicity in the sense of Gromov~\cite{gromov1987hyperbolic}
is arguably the most important quasi-isometric invariant of groups.
By a remarkable result of Buyalo, Dranishnikov, and Schroeder,
hyperbolic groups---and more generally, hyperbolic graphs of bounded degree---%
are quasi-isometric to finite Cartesian products of trees of bounded degree~\cite{buyalo2007embedding}.
Since trees of bounded degree have finite twin-width, and Cartesian products preserve it, we obtain the following.

\begin{proposition}
  \label{lem:tww-hyperbolic}
  Gromov-hyperbolic groups and graphs of bounded degree have finite twin-width.
\end{proposition}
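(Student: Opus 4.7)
The plan is to assemble the proof directly from the ingredients the authors have already laid out just before the statement, plus the one external result being cited (Buyalo--Dranishnikov--Schroeder). The two clauses of the proposition (for groups and for bounded degree graphs) will follow from the same argument, since any Cayley graph of a Gromov-hyperbolic group is itself a hyperbolic graph of bounded degree, and finite twin-width of a group is defined through any of its Cayley graphs. So the goal reduces to showing that a hyperbolic graph $G$ of bounded degree has $\stww(G) < \infty$.

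First, I would invoke the theorem of Buyalo, Dranishnikov, and Schroeder to obtain a finite collection of trees $T_1,\dots,T_k$ of bounded degree together with a quasi-isometric embedding $f : G \to T_1 \times \cdots \times T_k$, where the product is the Cartesian product of graphs. Then I would note that the Cartesian product on the right also has bounded degree, so in particular balls in $G$ have cardinality bounded by a function of their radius. The preliminaries spell out that under this condition every quasi-isometric embedding is automatically a $\lambda$-regular embedding for some $\lambda$, so $f$ is regular.

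Next I would control the twin-width of the target. By example \eqref{item:free} listed just before the proposition, each tree $T_i$ has strict twin-width equal to its maximum degree, which is finite. By example \eqref{item:product}, finite strict twin-width is preserved under Cartesian products (citing \cite[Theorem~7.2]{twin-width2}), so $\stww(T_1 \times \cdots \times T_k) < \infty$. Finally, \cref{lem:tww-regular-embedding} applied to $f$ yields $\stww(G) \le \lambda \cdot \stww(T_1 \times \cdots \times T_k)^{\lambda} < \infty$, which is what we want.

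I do not foresee any serious obstacle: the hard content is hidden inside the Buyalo--Dranishnikov--Schroeder embedding theorem, which is used as a black box, and the rest is just a chain of the stability properties already in the paper. The only small point to be careful about is to verify that the embedding provided really lands in a graph of bounded degree (so that the quasi-isometric embedding is automatically regular) and that Cartesian products in the graph-theoretic sense match the one used in \cite[Theorem~7.2]{twin-width2}; both are standard and require no new work.
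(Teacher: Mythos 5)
Your proof is correct and matches the paper's argument exactly: invoke Buyalo--Dranishnikov--Schroeder to embed the hyperbolic graph into a finite Cartesian product of bounded-degree trees, observe that such products have finite twin-width (trees having $\stww$ equal to their max degree, Cartesian products preserving finite $\stww$), and transfer back via \cref{lem:tww-regular-embedding}. One small slip in the write-up: the reason balls in $G$ have cardinality bounded by a function of their radius is that $G$ itself has bounded degree (by hypothesis), not that the Cartesian product target does---but the needed fact holds anyway, so the argument goes through unchanged.
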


\section{Matrix characterisation}
\label{sec:utww}
In this section, we give a second characterisation of twin-width through matrices.
This allows to generalise it beyond finitely generated groups,
and suggests a natural strengthening, called \emph{uniform twin-width}.

\subsection{Twin-Width of group actions}
Let~$\Gamma$ be a group acting on~$X$ by $\phi : \Gamma \to \Perm_X$,
where~$\Perm_X$ is the group of permutations on~$X$.
Given a total order~$<$ on~$X$ and~$g \in \Gamma$,
the (strict) twin-width of~$g$ acting on~$(X,<)$ is
\[ \stww_{(X,<)}(\phi(g)) \eqdef \stww\left(\bij{\phi(g)}\right), \]
where the $X \times X$ bijection matrix~$\bij{\phi(g)}$ is ordered according to~$<$.

The action of~$\Gamma$ through~$\phi$ is said to have finite twin-width with regards to~$<$
if~$\stww_{(X,<)}(\phi(g))$ is finite for every~$g \in \Gamma$---but possibly arbitrarily large.
This action is said to have finite twin-width
it has finite twin-width with regards to some total order on~$X$.
Finite grid number for a group action is defined similarly.

\begin{lemma}
  \label{lem:grid-theorem-group}
  For a finitely generated group~$\Gamma$, the following are equivalent.
  \begin{enumerate}
    \item \label{item:cayley-tww} Some (equivalently, every) Cayley graph of~$\Gamma$
      has finite twin-width (equivalently, finite grid number).
    \item \label{item:matrix-tww} The action of $\Gamma$ on itself by right product
      has finite twin-width (equivalently, finite grid number).
  \end{enumerate}
\end{lemma}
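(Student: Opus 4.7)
The plan is to reduce both (1) and (2) to finite grid number via \cref{thm:grid-theorem-graph,thm:grid-theorem-matrix}, then translate between the adjacency matrix of a Cayley graph and the bijection matrices of right multiplication. Fix a finite generating set $S$ and let $G = \cay(\Gamma, S)$. The key starting observation is that for any total order $<$ on $\Gamma$, the adjacency matrix decomposes as the superposition
\[ \adj{<}{G} = \bigvee_{s \in S \cup S^{-1}} \bij{\phi(s)}, \]
since $\adj{<}{G}(x,y) = 1$ iff $x^{-1}y \in S \cup S^{-1}$, while $\bij{\phi(s)}(x,y) = 1$ iff $y = xs$. In particular, each bijection matrix $\bij{\phi(s)}$ for a generator $s$ is entry-wise dominated by $\adj{<}{G}$, so $\gv(\bij{\phi(s)}) \le \gv(\adj{<}{G})$.

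For $(1) \Rightarrow (2)$, I would take an order $<$ witnessing that $\adj{<}{G}$ has finite grid number. The observation above immediately gives finite grid number, hence finite twin-width by \cref{thm:grid-theorem-matrix}, for every generator bijection matrix $\bij{\phi(s)}$. To reach an arbitrary $g \in \Gamma$, I would write $g = s_1 \cdots s_n$ as a word in generators, so that $\phi(g) = \phi(s_n) \circ \cdots \circ \phi(s_1)$, and iterate \cref{lem:perm-composition}: each composition preserves finite twin-width, with a bound depending on the word length of $g$—which is fine, since only pointwise finiteness over $\Gamma$ is required.

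For the converse $(2) \Rightarrow (1)$, starting from an order $<$ for which every $\bij{\phi(g)}$ has finite twin-width, the finitely many generators give a uniform bound $k$ on $\gv(\bij{\phi(s)})$ for $s \in S \cup S^{-1}$. Applying \cref{lem:grid-ramsey} to the superposition decomposition yields $\gv(\adj{<}{G}) < r^{rk}$ with $r = \card{S \cup S^{-1}}$, a finite bound, so $G$ has finite grid number and hence finite twin-width by \cref{thm:grid-theorem-graph}.

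I do not foresee a real obstacle, since \cref{lem:perm-composition,lem:grid-ramsey} do the heavy lifting on the two sides. The subtle point worth emphasising is the asymmetry between the two directions: passing from the action to the Cayley graph requires uniformity over the finite set of generators (supplied for free by finite generation together with the Ramsey-type \cref{lem:grid-ramsey}), while passing from the Cayley graph to arbitrary group elements only needs pointwise finiteness (supplied by iterated composition, with no uniformity needed). This is precisely what allows the non-uniform condition defining finite twin-width of the action to be equivalent to finite twin-width of the Cayley graph.
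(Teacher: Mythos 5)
Your proof is correct and follows essentially the same route as the paper's: the superposition decomposition of the adjacency matrix into generator bijection matrices, combined with \cref{lem:grid-ramsey} for the direction from the action to the Cayley graph, and entry-wise domination plus iterated \cref{lem:perm-composition} for the reverse direction. The remark on the asymmetry between the two directions (uniformity over generators versus pointwise finiteness over group elements) is accurate and matches what is implicit in the paper's argument.
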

\begin{proof}
  In the statement, equivalences of finite twin-width and finite grid number
  are from \cref{thm:grid-theorem-matrix,thm:grid-theorem-graph}.
  For~$x \in \Gamma$ and~$<$ a total order on~$\Gamma$,
  we denote by~$M^<_x$ the matrix of the action of~$x$ on~$(\Gamma,<)$ by right product.

  Suppose that the action of~$\Gamma$ on itself has finite grid number,
  witnessed by some total order~$<$ on~$\Gamma$.
  Let us prove that~$\cay(\Gamma,S)$ has finite grid number for an arbitrary finite generating set~$S$.
  Let~$M$ be the adjacency matrix of~$\cay(\Gamma,S)$ with regards to~$<$.
  Then~$M$ is the superposition
  \[ M = \biglor_{s \in S \cup S^{-1}} M^<_s \]
  Any single~$M^<_x$ has finite grid number by hypothesis, and~$S \cup S^{-1}$ is finite,
  thus it follows from \cref{lem:grid-ramsey} that~$M$ has finite grid number.

  Conversely, assume that~$\cay(\Gamma,S)$ has finite grid number for some finite generating set~$S$.
  This is witnessed by a total order~$<$ on~$\Gamma$ such that
  the adjacency matrix~$M$ of~$\cay(\Gamma,S)$ with regards to~$<$ has finite twin-width.
  For any~$s \in S \cup S^{-1}$, one easily verifies that~$M^<_s$ is contained in~$M$,
  in the sense that~$M^<_s(x,y) = 1$ implies~$M(x,y) = 1$.
  It follows that~$M^<_s$ has finite twin-width.
  If~$x \in \Gamma$ is now an arbitrary element, $x$ can be decomposed as some product~$x = s_1 \dots s_n$
  with~$s_i \in S \cup S^{-1}$, and the action of~$x$ is the composition of the actions of~$s_1,\dots,s_n$.
  Since all~$M^<_{s_i}$ have finite twin-width,
  it follows from \cref{lem:perm-composition} that~$M^<_x$ has finite twin-width.
\end{proof}

Thus we can take as alternative definition that~$\Gamma$ has finite twin-width
if its action on itself by right product has finite twin-width.
This definition immediately generalises to non-finitely generated groups.

\subsection{Uniform twin-width}
There is a natural strengthening of the former characterisation.
Consider an action of~$\Gamma$ on~$X$ given by~$\phi : \Gamma \to \Perm_X$.
Given a total order~$<$ on~$X$, the \emph{uniform twin-width} of~$\phi$ with regards to~$<$ is
\[ \utww_{(X,<)}(\phi) \eqdef \sup_{g \in \Gamma} \stww_{(X,<)}(\phi(g)). \]
Then, the uniform twin-width of~$\phi$, denoted by~$\utww(\phi)$,
is the minimum of~$\utww_{(X,<)}(\phi)$ over all choices of~$<$ total order on~$X$.
The uniform twin-width of a group~$\Gamma$, denoted~$\utww(\Gamma)$,
is defined as the uniform twin-width of its action on itself by right product.

Unlike twin-width, the uniform twin-width of a group or group action is a~well defined number (or infinity).
For some fixed ordering, having finite uniform twin-width
requires a uniform bound on the twin-width of the actions of each element,
whereas having finite twin-width only requires the twin-width of each of these actions to be finite.
Thus finite uniform twin-width implies finite twin-width, while the converse is not a priori true.

\subsection{Examples}
\label{sec:utww-examples}
Let us refine the examples of \cref{sec:geometric-tww}.
A \emph{right-invariant} order on a group~$\Gamma$ is a total order~$<$
such that for all~$x,y,z \in \Gamma$, $x < y$ implies~$xz < yz$.
A group is \emph{(right-)orderable} if it admits a right-invariant total order.
Groups of finite twin-width can be seen as a vast generalisation of orderable groups.
Indeed, if~$<$ is a right-invariant order on~$\Gamma$,
then the action of any~$x \in \Gamma$ by right product is monotone,
hence its matrix has strict twin-width~2 (the minimum possible value for non-trivial matrices).
\begin{proposition}
  \label{prop:ordered-tww}
  Right-orderable groups have uniform twin-width~2.
\end{proposition}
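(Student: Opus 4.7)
The plan is to show that any right-invariant total order $<$ on $\Gamma$ already witnesses $\utww_{(\Gamma,<)}(\phi) \le 2$, where $\phi(g) : x \mapsto xg$ is right multiplication by $g$.

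First, right-invariance directly gives that each $\phi(g)$ is strictly order-preserving: $x < x'$ implies $xg < x'g$. Thus the bijection matrix $\bij{\phi(g)}$, with both rows and columns indexed by $(\Gamma,<)$, has its $1$-entries $\{(x, xg) : x \in \Gamma\}$ lying along a monotone path --- listing rows $x_1 < x_2 < \dots$ in order, the columns $x_1 g < x_2 g < \dots$ containing the corresponding $1$'s appear in the same order. The same monotone structure is inherited by every finite submatrix, which is therefore the bijection matrix of a monotone partial matching.

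Next I would exhibit, for each finite monotone (partial) bijection matrix $M$, a division sequence of width at most $2$. The natural construction is iterative: repeatedly merge the two topmost row parts, then the two leftmost column parts, and continue until a single row part and column part remain. After each pair of merges, the top-left block absorbs a single $1$-entry and the remainder is a shorter monotone matching in the bottom-right block; in the intermediate step, exactly one row (respectively column) has degree $2$ while all others remain of degree at most $1$. Hence the width is $2$ throughout. Since twin-width of infinite matrices is defined as the supremum over finite submatrices, this gives $\stww(\bij{\phi(g)}) \le 2$ uniformly in $g$, so $\utww_{(\Gamma,<)}(\phi) \le 2$ and therefore $\utww(\Gamma) \le 2$.

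The matching lower bound, when $\Gamma$ is non-trivial, comes for free: any bijection matrix of size at least $2$ must, at some stage of its division sequence, merge two rows (or columns) each containing a $1$-entry, producing degree~$2$ in the quotient, so $\stww \ge 2$. The only step requiring actual verification is the explicit width-$2$ division sequence for monotone matchings, which is essentially the textbook argument that increasing permutations have twin-width~$2$; there is no real obstacle, merely the need to check that the alternating row/column merge keeps the maximum degree at $2$.
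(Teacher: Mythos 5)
Your proof is correct and takes essentially the same route as the paper, which observes in one line that right multiplication is monotone with respect to a right-invariant order, so each matrix $\bij{\phi(g)}$ is an increasing bijection matrix and hence has strict twin-width~2. Your alternating row/column merge and the $2\times 2$-submatrix lower bound are precisely the standard details the paper leaves implicit.
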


Finitely generated free groups are known to be orderable, hence have uniform twin-width~2.
It is easy to see that finite cyclic groups, with the natural order, also have uniform twin-width~2.
Using \cref{lem:tww-substitution}, we obtain that Cartesian product preserves uniform twin-width, i.e.
\[ \utww(\Gamma_1 \times \Gamma_2) = \max(\utww(\Gamma_1), \utww(\Gamma_2)). \]
This is witnessed by a lexicographic order on~$\Gamma_1 \times \Gamma_2$.
Therefore all finitely generated abelian groups have uniform twin-width~2.

\section{Groups with finite twin-width}
\label{sec:finite-tww}
In this section, we prove that uniform twin-width is preserved by a number of group constructions,
and use these results to provide examples of groups with finite twin-width.
Here, unlike the rest of the paper, we use~$G,H$ etc.\ to denote groups and not graphs.

Let us start with the trivial remark that twin-width is monotone under taking subgroups.
It is used for lower bounds in results of this section.
\begin{lemma}
  If~$G$ is a group, $H \le G$ is a subgroup, and~$G$ has finite twin-width, then so does~$H$, and
  \[ \utww(H) \le \utww(G). \]
  \label{lem:subgroup}
\end{lemma}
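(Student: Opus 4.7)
The plan is to show that uniform twin-width is preserved (or lowered) when passing to a subgroup, by restricting the witnessing order of $G$ to $H$ and using monotonicity of twin-width under submatrices.

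Concretely, let $<_G$ be a total order on $G$ achieving $\utww_{(G,<_G)}(\phi_G) = \utww(G)$, where $\phi_G$ denotes the right product action of $G$ on itself (in the case $\utww(G) = \infty$ we just pick any order, after reducing to the ``finite twin-width'' claim). Define $<_H$ to be the restriction of $<_G$ to $H$. The key observation is that since $h \in H$ and $H$ is a subgroup, right multiplication by $h$ stabilises $H$ as a subset of $G$. Hence the bijection matrix $\bij{\phi_H(h)}$ of the action of $h$ on $(H,<_H)$ is exactly the submatrix of $\bij{\phi_G(h)}$ obtained by restricting both the row index set and the column index set to $H$.

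Now strict twin-width is monotone under taking submatrices (this is the matrix analogue of the property noted in \cref{sec:twinwidth} that strict twin-width of graphs is non-increasing under taking subgraphs; for matrices it follows directly, since any division sequence of a matrix~$M$ restricts to a division sequence of any submatrix of~$M$, and the quotient matrices of the restricted sequence are submatrices of the original quotient matrices, so their maximum row/column degree can only decrease). Therefore for every $h \in H$,
\[ \stww_{(H,<_H)}(\phi_H(h)) \le \stww_{(G,<_G)}(\phi_G(h)). \]
If $G$ has finite twin-width, the right-hand side is finite for every $h \in H$ (choosing $<_G$ to be a witness of finite twin-width of $G$), so $H$ has finite twin-width. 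Taking suprema over $h \in H$ gives
\[ \utww(H) \le \utww_{(H,<_H)}(\phi_H) \le \utww_{(G,<_G)}(\phi_G) = \utww(G), \]
which is the required inequality.

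There is no real obstacle here; the only point that warrants care is the verification that the restriction of $\bij{\phi_G(h)}$ to rows and columns indexed by $H$ is literally $\bij{\phi_H(h)}$, which uses precisely that $h \in H$ so that $Hh \subseteq H$ and $Hh^{-1} \subseteq H$. Once this is checked, submatrix monotonicity of strict twin-width closes the argument.
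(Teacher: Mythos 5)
Your proof is correct and is precisely the argument the paper has in mind: the paper states this lemma as a ``trivial remark'' and gives no explicit proof, but the intended reasoning is exactly what you wrote. Restricting the witnessing order on $G$ to $H$, observing that for $h\in H$ the matrix of $h$ acting on $(H,<_H)$ is the submatrix of the matrix of $h$ acting on $(G,<_G)$ indexed by $H\times H$ (which uses that $H$ is closed under right multiplication by $h$), and then invoking monotonicity of strict twin-width under submatrices (a division sequence restricts to one of any submatrix, with quotients having no larger degrees) closes the argument for both the finite-twin-width claim and the inequality on $\utww$. No gap.
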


\subsection{Limits}
The characterisation of twin-width through matrices allows generalisation to infinitely generated groups.
However, uniform twin-width can be reduced to the case of finitely generated groups,
because it is stable by direct limits.

\begin{lemma}
  \label{lem:group-limit}
  If~$G$ is direct limit of~$(G_i)_{i \in I}$, then
  \[ \utww(G) = \sup_{i \in I} \utww(G_i). \]
\end{lemma}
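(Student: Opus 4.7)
The plan is as follows. The lower bound $\utww(G) \ge \sup_{i \in I} \utww(G_i)$ is immediate from \cref{lem:subgroup}, since each $G_i$ is a subgroup of $G$. For the upper bound, set $k \eqdef \sup_{i \in I} \utww(G_i)$ (assuming it is finite, else there is nothing to prove); the content of the lemma is to produce a total order on $G$ witnessing $\utww(G) \le k$.

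For each $i \in I$, I would first choose a total order $<_i$ on $G_i$ such that the bijection matrix of right multiplication by any $g \in G_i$ on $(G_i, <_i)$ has strict twin-width at most $k$. Applying \cref{lem:direct-limit-order} to the family of ordered substructures $(G_i, <_i)_{i \in I}$ then yields a total order $<$ on $G$ which, on every finite subset of $G$, coincides with some $<_i$.

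It remains to verify that $<$ witnesses $\utww(G) \le k$. Fix $g \in G$ and let $M_g$ denote the bijection matrix of right multiplication by $g$ on $(G, <)$. Since the strict twin-width of an infinite matrix is the supremum of the strict twin-widths of its finite submatrices, it suffices to bound $\stww(M')$ for the restriction $M'$ of $M_g$ to any finite $R \times C \subseteq G \times G$. Applying \cref{lem:direct-limit-order} to the finite set $R \cup C \cup \{g\}$ produces some $i \in I$ containing this set and on which $<$ agrees with $<_i$. Under this agreement, $M'$ is identified with a finite submatrix of the bijection matrix of right multiplication by $g$ on $(G_i, <_i)$, whose strict twin-width is at most $k$ by the choice of $<_i$; hence $\stww(M') \le k$. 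Taking suprema over $M'$ and then over $g$ gives $\utww(G) \le k$.

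The main subtlety to be mindful of is that the order $<$ produced by \cref{lem:direct-limit-order} need not globally restrict to any particular $<_i$; it only agrees with the $<_i$ on finite subsets, and in general two chosen $<_i, <_j$ may even disagree on $G_i \cap G_j$ in an irreconcilable way. Fortunately, the definition of twin-width through finite submatrices tolerates exactly this weaker form of agreement, which is precisely what makes the ultrafilter construction of \cref{lem:direct-limit-order} usable here.
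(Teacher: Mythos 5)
Your proposal is correct and follows essentially the same route as the paper: choose witnessing orders $<_i$ on each $G_i$, apply \cref{lem:direct-limit-order} to glue them into a total order $<$ on $G$, and then bound the twin-width of each finite submatrix of $M_g$ by passing to some $G_i$ containing both the relevant indices and $g$ itself. You additionally make explicit the lower-bound direction via \cref{lem:subgroup}, which the paper leaves implicit, and your remark about the orders $<_i$ possibly disagreeing on overlaps correctly identifies the point that the ultrafilter construction is designed to sidestep.
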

\begin{proof}
  For each~$G_i$, let~$<_i$ be an order witnessing that~$\utww(G_i) \le k$.
  By \cref{lem:direct-limit-order}, there is an order~$<$ on~$G$ such that for any finite~$X \subset G$,
  the orders~$<$ and~$<_i$ coincide on~$X$ for some~$i$.

  For~$x \in G$, let~$M_x$ be the matrix of~$x$ acting on~$(G,<)$ by right product.
  We want to show $\stww(M_x) \le k$, i.e.\ that for every finite submatrix $M$ of $M_x$, $\stww(M) \le k$.
  Let~$X$ contain the row and column indices of~$M$, plus~$x$.
  Since~$X$ is finite, there is some~$i$ with~$X \subset G_i$ and such that~$<$ and~$<_i$ coincide on~$X$.
  Then~$M$ is a submatrix of the matrix of~$x$ acting on~$(G_i,<_i)$ by right product,
  hence~$\stww(M) \le k$, which implies the result.
\end{proof}

\begin{corollary}
  \label{cor:utww-infinite-gen}
  Let~$G$ be a group and let~$\mathcal{H}$ be the collection of finitely generated subgroups of~$G$.
  Then
  \[ \utww(G) = \sup_{H \in \mathcal{H}} \utww(H). \]
\end{corollary}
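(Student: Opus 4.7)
The plan is to exhibit $G$ as a direct limit of its finitely generated subgroups and then invoke \cref{lem:group-limit} directly.

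First, I would verify that $\mathcal{H}$, partially ordered by inclusion, is a directed family of subgroups whose union is $G$. Given $H_1, H_2 \in \mathcal{H}$ with finite generating sets $S_1, S_2$ respectively, the subgroup $\langle S_1 \cup S_2 \rangle$ is finitely generated, hence lies in $\mathcal{H}$, and contains both $H_1$ and $H_2$. Moreover every $g \in G$ belongs to the finitely generated subgroup $\langle g \rangle \in \mathcal{H}$, so $G = \bigcup_{H \in \mathcal{H}} H$. Therefore $G$ is the direct limit of $\mathcal{H}$ in the sense defined in \cref{sec:notation}.

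Next, I would apply \cref{lem:group-limit} to this directed family, which immediately yields
\[ \utww(G) = \sup_{H \in \mathcal{H}} \utww(H). \]
Alternatively, one can observe that the inequality $\sup_{H \in \mathcal{H}} \utww(H) \le \utww(G)$ also follows from \cref{lem:subgroup}, which handles the easier direction (subgroup monotonicity), while the nontrivial direction $\utww(G) \le \sup_{H \in \mathcal{H}} \utww(H)$ is precisely what \cref{lem:group-limit} provides.

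There is no real obstacle here: the only content of the corollary is the translation between the direct-limit formulation of \cref{lem:group-limit} and the concrete statement about finitely generated subgroups. The one subtlety worth double-checking is that the hypothesis of \cref{lem:group-limit} -- namely, directedness of the family -- is indeed satisfied; this is ensured by closing finite unions of finite generating sets under the subgroup generated by them, as noted above.
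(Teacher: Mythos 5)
Your proof is correct and follows exactly the same route as the paper: observe that $G$ is a direct limit of its finitely generated subgroups, then apply \cref{lem:group-limit}. The paper states this in one line, while you helpfully spell out the directedness check (closing under the subgroup generated by the union of finite generating sets) that the paper leaves implicit.
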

\begin{proof}
  The group~$G$ is direct limit of~$\mathcal{H}$.
\end{proof}

In \cref{sec:utww-examples}, we already proved that finitely generated abelian groups have uniform twin-width~2.
Applying \cref{cor:utww-infinite-gen}, we obtain
\begin{proposition}
  \label{prop:abelian-tww}
  Abelian groups have uniform twin-width~2.
\end{proposition}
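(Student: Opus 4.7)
The plan is to reduce the statement to the finitely generated case, which was already handled at the end of \cref{sec:utww-examples}, via the direct-limit principle established just before.

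The first step is to observe that an arbitrary abelian group~$G$ is the direct limit of its family~$\mathcal{H}$ of finitely generated subgroups: every element of~$G$ belongs to some $H \in \mathcal{H}$, and~$\mathcal{H}$ is directed under inclusion because the subgroup generated by~$H_1 \cup H_2$ is itself finitely generated. Crucially, every~$H \in \mathcal{H}$ is again abelian, since being abelian is inherited by subgroups.

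The second step is to invoke \cref{cor:utww-infinite-gen}, which gives
\[ \utww(G) = \sup_{H \in \mathcal{H}} \utww(H). \]
Since each $H \in \mathcal{H}$ is a finitely generated abelian group, the computation in \cref{sec:utww-examples} (cyclic groups have uniform twin-width~$2$ with the natural order, and \cref{lem:tww-substitution} implies that Cartesian products preserve uniform twin-width) yields $\utww(H) \le 2$, and therefore $\utww(G) \le 2$. For the matching lower bound, as soon as~$G$ is non-trivial, any non-identity bijection matrix has strict twin-width at least~$2$, so $\utww(G) \ge 2$.

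There is no real obstacle: the heavy lifting has already been done in \cref{lem:group-limit,cor:utww-infinite-gen}, and the work needed here is simply to notice that the class of abelian groups is closed under taking subgroups so that the reduction lands inside a case already treated. The whole proof is therefore a one-paragraph application of the direct-limit corollary to the finitely generated abelian case.
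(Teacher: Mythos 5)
Your proposal matches the paper's proof exactly: both reduce to the finitely generated abelian case established in \cref{sec:utww-examples} and then apply \cref{cor:utww-infinite-gen}, noting that finitely generated subgroups of an abelian group are abelian. Nothing to add.
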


\subsection{Products, subgroups, and quotients}
In \cref{sec:geometric-tww,sec:utww-examples}, we made the remark that
(uniform) twin-width is stable under Cartesian product.
We will now give a significant generalisation of this result.

Let~$G$ be a group and~$H \le G$.
We write~$G / H$ for the right quotient, i.e.\ the set of right cosets~$\setst{Hg}{g \in G}$.
Then, $G$ acts on~$G/H$ by right product,
and we abusively talk about the (uniform) twin-width of~$G/H$ when meaning the (uniform) twin-width of this action.
When~$H$ is a normal subgroup, the (uniform) twin-width of~$G/H$ as group
indeed coincides with that of~$G$ acting on~$G/H$, justifying this convention.
\begin{lemma}
  \label{lem:extension-gen}
  For any group~$G$ and~$H \le G$, the following statements hold.
  \begin{enumerate}
    \item $\utww(G) \le \max\left( \utww(H), \utww(G/H) \right)$.
    \item If~$H$ has finite \emph{uniform} twin-width and~$G/H$ has finite twin-width,
      then~$G$ has finite twin-width.
    \item If~$H$ has finite twin-width and~$G/H$ is finite, $G$ has finite twin-width.
  \end{enumerate}
\end{lemma}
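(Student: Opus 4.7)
The plan is to realise the action matrix $M^<_x$ of an arbitrary $x \in G$ as a substitution in a bijection matrix (in the sense of \cref{lem:tww-substitution}), whose outer matrix is the action of $x$ on the quotient $G/H$, and whose inner matrices are actions of elements of $H$ on itself by right product. All three items then follow by bookkeeping on the bounds obtained from that lemma.

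First I would fix a set $R \subseteq G$ of representatives of the right cosets of $H$ in $G$, so that every $g \in G$ admits a unique decomposition $g = hr$ with $h \in H$ and $r \in R$. Choose a total order $<_H$ on $H$ and a total order $<_{G/H}$ on $G/H$ (specific choices per case, see below), and order $G$ lexicographically: $h_1 r_1 < h_2 r_2$ iff $Hr_1 <_{G/H} Hr_2$, or $r_1 = r_2$ and $h_1 <_H h_2$. For any $x \in G$ and any $r \in R$, write $rx = h_r r'$ where $r' \in R$ represents the coset $Hrx$ and $h_r \in H$. Then right multiplication by $x$ sends $g = hr$ to $h h_r r'$: at the coset level this is the usual action $Hr \mapsto Hr'$ of $G$ on $G/H$, and within the coset $Hr$, identified with $H$ via the order-preserving bijection $h \mapsto hr$, it acts as right multiplication by $h_r$ on $H$. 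Consequently, $M^<_x$ is exactly the substitution of $\bij{h_r}$ into the $1$-entry at position $(Hr, Hr')$ of the action matrix of $x$ on $(G/H, <_{G/H})$.

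By \cref{lem:tww-substitution}, this yields
\[ \stww(M^<_x) \;=\; \max\!\left(\stww(\text{action of $x$ on $(G/H, <_{G/H})$}),\; \sup_{r \in R} \stww(\bij{h_r})\right). \]
For (1), I would pick $<_H$ and $<_{G/H}$ realising the respective uniform twin-widths; both terms of the maximum are then uniformly bounded in $x$ by $\utww(H)$ and $\utww(G/H)$, yielding $\utww(G) \le \max(\utww(H), \utww(G/H))$. For (2), pick $<_H$ realising the finite $\utww(H)$ and $<_{G/H}$ witnessing finite twin-width of $G$ acting on $G/H$; then for each individual $x \in G$ the outer matrix has finite (though $x$-dependent) twin-width, while every inner matrix has twin-width at most $\utww(H) < \infty$, so $\stww(M^<_x) < \infty$. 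For (3), the outer matrix has only $|G/H| < \infty$ rows and columns and hence trivially finite twin-width, while $\{h_r : r \in R\}$ is a finite family in $H$, each element of which has finite twin-width for a fixed $<_H$ witnessing that $H$ has finite twin-width, so the supremum is a finite maximum.

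The only non-routine point is to check that, with the lexicographic order on $G$, the inner blocks of $M^<_x$ are literally the matrices $\bij{h_r}$ rather than order-twisted variants — this is arranged by taking the order on each coset $Hr$ to be the pushforward of $<_H$ along $h \mapsto hr$, which is precisely what the lexicographic rule above prescribes. Beyond this bookkeeping, no significant obstacle is expected: the whole argument is a single clean application of \cref{lem:tww-substitution}.
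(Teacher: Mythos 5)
Your proposal is correct and matches the paper's proof: the paper likewise fixes a transversal, orders $G$ lexicographically by coset then by pushing $<_H$ through right multiplication by the transversal element, and identifies $M^G_a$ as the substitution of $M^{G/H}_a$ by inner blocks isomorphic to $M^H_r$ with $r = t_1 a t_2^{-1}$ (your $h_r$), before invoking \cref{lem:tww-substitution} and the same case analysis for the three items. No substantive difference.
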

Before proving it, we list some direct, useful corollaries, with more natural statements.
\begin{corollary}
  \label{cor:extension}
  If~$G$ is an extension of~$Q$ by~$H$, i.e.~$H \lhd G$ and~$Q \iso G/H$
  \[ \utww(G) \le \max(\utww(H), \utww(Q)). \]
\end{corollary}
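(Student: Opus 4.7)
The plan is to deduce the corollary immediately from part~(1) of \cref{lem:extension-gen}. That lemma states $\utww(G) \le \max(\utww(H), \utww(G/H))$, where by the convention fixed just before the lemma, $\utww(G/H)$ denotes the uniform twin-width of the $G$-action on the right coset space by right product. So all that is needed is to check that, when $H$ is normal, this coincides with $\utww(Q)$ of the quotient group acting on itself; this is precisely the content of that conventional remark, and I would spell it out briefly.

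Specifically, for $g \in G$ the induced permutation $\rho_g$ of $G/H$ is $Hx \mapsto Hxg$. If $g' \in gH$, write $g' = gh$ with $h \in H$; by normality, $xh \in Hx$, so $Hxg' = Hxgh = Hxg$, i.e.~$\rho_g = \rho_{g'}$. Hence $\rho$ factors through the quotient homomorphism $G \to Q$, and the induced map $Q \to \Perm_{G/H}$ is, under the canonical identification $Q \iso G/H$, nothing but the right regular action of $Q$ on itself. For any fixed total order $<$ on $Q$, the families of bijection matrices $\{\bij{\rho_g}\}_{g \in G}$ and $\{\bij{\rho_q}\}_{q \in Q}$ therefore coincide as multisets of matrices, so their suprema of strict twin-widths agree; taking the infimum over orderings yields $\utww(G/H) = \utww(Q)$.

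Substituting this equality into part~(1) of \cref{lem:extension-gen} gives
\[ \utww(G) \le \max\left(\utww(H), \utww(G/H)\right) = \max\left(\utww(H), \utww(Q)\right), \]
which is the statement of the corollary. There is no real obstacle here; the whole content is the observation that the coset action of $G$ collapses to the regular action of the quotient, a standard consequence of normality, so that the generalised bound of \cref{lem:extension-gen}~(1), phrased in terms of actions on coset spaces, specialises to a clean bound on the group-theoretic uniform twin-width of the quotient.
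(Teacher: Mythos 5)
Your proposal is correct and follows the same route as the paper: the paper treats \cref{cor:extension} as an immediate consequence of part~(1) of \cref{lem:extension-gen}, relying on the remark made just before that lemma that, for normal $H$, the $G$-action on $G/H$ by right product coincides with the right regular action of the quotient group. You have simply spelled out that identification explicitly, which is exactly the content the paper leaves implicit.
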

\begin{corollary}
  \label{cor:semidirect-product}
  If~$G \rtimes H$ is any semi-direct product, then
  \[ \utww(G \rtimes H) = \max(\utww(G), \utww(H)). \]
\end{corollary}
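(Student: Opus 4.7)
The plan is to obtain the corollary by combining the extension inequality \cref{cor:extension} (for the upper bound) with the subgroup monotonicity \cref{lem:subgroup} (for the lower bound).

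First I would handle the upper bound. By the definition of the semidirect product $G \rtimes H$, the copy of $G$ embedded as $G \times \{e_H\}$ is a normal subgroup, and the quotient $(G \rtimes H)/G$ is isomorphic to $H$. Thus $G \rtimes H$ is an extension of $H$ by $G$, and \cref{cor:extension} directly yields
\[ \utww(G \rtimes H) \le \max(\utww(G), \utww(H)). \]

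Next I would handle the lower bound. Both $G$ and $H$ embed as subgroups of $G \rtimes H$ (the former as the normal factor, the latter as the complement $\{e_G\} \times H$). By \cref{lem:subgroup}, each subgroup has uniform twin-width at most that of the ambient group, so
\[ \max(\utww(G), \utww(H)) \le \utww(G \rtimes H). \]
Combining the two inequalities gives the equality stated in the corollary.

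There is essentially no obstacle: both ingredients are already established in the excerpt, and the only thing to observe is the standard group-theoretic fact that in a semidirect product the first factor is normal with quotient isomorphic to the second factor, while both factors embed as subgroups. The proof is therefore a one-line application of \cref{cor:extension,lem:subgroup}.
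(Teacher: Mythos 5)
Your proposal is correct and is precisely the intended derivation: the paper states \cref{cor:semidirect-product} as a ``direct'' corollary of \cref{lem:extension-gen} without spelling out a proof, and the natural argument is exactly yours, combining \cref{cor:extension} (applied to the normal factor $G$ with quotient isomorphic to $H$) for the upper bound with \cref{lem:subgroup} (applied to each of the two embedded copies of $G$ and $H$) for the lower bound.
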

\begin{corollary}
  If~$G$ is a group with~$H \le G$ a subgroup of finite index $[G : H] = k$,
  then~$G$ has finite twin-width if and only if~$H$ does, and
  \[ \utww(H) \le \utww(G) \le \max(\utww(H),k). \]
  \label{cor:finite-index}
\end{corollary}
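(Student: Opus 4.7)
The plan is to combine the two preceding results of the section. \cref{lem:subgroup} directly yields the lower bound $\utww(H) \le \utww(G)$ and shows that if~$G$ has finite twin-width then so does~$H$. The upper bound and the converse implication will come from \cref{lem:extension-gen}.

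For the upper bound, I would apply \cref{lem:extension-gen}(1), which gives $\utww(G) \le \max(\utww(H), \utww(G/H))$, where $G/H$ denotes the set of right cosets on which~$G$ acts by right product. The key remaining observation is that $\utww(G/H) \le k$. Since $G/H$ has $k$ elements, for any total order on it and any $g \in G$, the matrix of the action of~$g$ is a $k \times k$ bijection matrix. Any such matrix has strict twin-width at most~$k$: in any division sequence, each intermediate quotient matrix has at most~$k$ rows and~$k$ columns, so the degree of every row and column in the quotient is trivially bounded by~$k$. This yields $\utww(G) \le \max(\utww(H), k)$.

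For the converse implication---that finite twin-width of~$H$ implies finite twin-width of~$G$---the upper bound just established is vacuous when $\stww(H)$ is finite but $\utww(H)$ is infinite. This case is handled directly by \cref{lem:extension-gen}(3), which states exactly that if $H$ has finite twin-width and $G/H$ is finite, then~$G$ has finite twin-width.

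No real obstacle arises: the substance is contained in \cref{lem:extension-gen}, and the only additional ingredient is the elementary bound $\stww(M) \le k$ for any $k \times k$ binary matrix, coming straight from the definition.
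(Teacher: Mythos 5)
Your proof is correct and takes essentially the same route as the paper, which simply lists this corollary as a "direct" consequence of \cref{lem:subgroup} and \cref{lem:extension-gen}. The bridging observation that $\utww(G/H) \le k$---because the action of $G$ on a $k$-element set yields $k \times k$ bijection matrices, and any $k \times k$ matrix trivially has strict twin-width at most $k$---is exactly what is needed to turn \cref{lem:extension-gen}(1) into the stated upper bound, and you correctly invoke \cref{lem:extension-gen}(3) rather than (2) for the converse direction, since only $\stww(H)<\infty$ (not $\utww(H)<\infty$) is assumed there.
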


\begin{proof}[Proof of \cref{lem:extension-gen}]
  For~$x \in G$, let~$\bar x \eqdef Hx$ be its equivalence class in~$G/H$.
  We choose a transversal~$T \subset G$ of~$G/H$, meaning that each class of~$G/H$
  can be uniquely written as~$\bar t$ for some~$t \in T$.
  Suppose given total orders~$<_H$, $<_{G/H}$ on~$H$ and~$G/H$
  witnessing their twin-width or uniform twin-width.
  We define a total order~$<$ on~$G$ as follows.
  \begin{itemize}
    \item The order between cosets is defined by
      \[ \bar x <_{G/H} \bar y \quad \Longrightarrow \quad x < y. \]
    \item For~$t \in T$, the order inside the coset~$\bar t$ is defined by
      \[ \forall x,y \in \bar t, \quad x < y \quad \iff \quad x t^{-1} <_H y t^{-1}. \]
  \end{itemize}

  For~$a \in G$, let~$M^G_a$ be the matrix of~$a$ acting on~$G$ by right product with regards to~$<$.
  Similarly, $M^{G/H}_a$ is the matrix of~$a$ acting on~$(G/H,<_{G/H})$,
  and for~$b \in H$, $M^H_b$ is the matrix of~$b$ acting on~$(H,<_H)$.
  Fix some~$a \in G$.

  \begin{claim}
    Suppose that~$\stww(M^{G/H}_a) \le k$,
    and that for any~$t_1,t_2 \in T$, for~$r \eqdef t_1 a t_2^{-1} \in H$, we have~$\stww(M^H_r) \le k$.
    Then
    \[ \stww(M^G_a) \le k. \]
  \end{claim}
  \begin{proof}
    Let~$t_1,t_2 \in T$ be such that~$t_1 a t_2^{-1} \in H$, or equivalently,
    right multiplication by~$a$ maps the coset~$\bar t_1 = H t_1$ to~$\bar t_2 = H t_2$.
    In that case, define~$P^{t_1,t_2}_a$ to be the matrix of this map,
    where~$H t_1$ and~$H t_2$ are ordered as in~$G$.
    It is easy to verify that~$M^G_a$ is obtained from~$M^{G/H}_a$
    by substituting the~`1' at position~$(\bar t_1, \bar t_2)$ by~$P^{t_1,t_2}_a$
    for all such choices of~$t_1,t_2$, see \cref{fig:group-extension-matrix}.
    Thus, by \cref{lem:tww-substitution}, it suffices to prove~$\stww(P^{t_1,t_2}_a) \le k$.

    \begin{figure}
      \begin{center}
        \begin{tikzpicture}
          \draw (0,-4.5) -- (0,0) -- (4.5,0);

          \node (s1) at (1.5,0.3) {$H s_1$};
          \node (t1) at (3.5,0.3) {$H t_1$};
          \node (t2) at (-0.5,-1.5) {$H t_2$};
          \node (s2) at (-0.5,-3) {$H s_2$};

          \draw [thin] (1,-2.5) -- (1,-3.5) -- (2,-3.5) -- (2,-2.5) -- (1,-2.5);
          \node (p1) at (1.5,-3) {$P^{s_1,s_2}_a$};
          \draw[dotted] (1,0) -- (1,-2.5);
          \draw[dotted] (2,0) -- (2,-2.5);
          \draw[dotted] (0,-2.5) -- (1,-2.5);
          \draw[dotted] (0,-3.5) -- (1,-3.5);

          \draw [thin] (3,-1) -- (3,-2) -- (4,-2) -- (4,-1) -- (3,-1);
          \node (p2) at (3.5,-1.5) {$P^{t_1,t_2}_a$};
          \draw[dotted] (3,0) -- (3,-1);
          \draw[dotted] (4,0) -- (4,-1);
          \draw[dotted] (0,-1) -- (3,-1);
          \draw[dotted] (0,-2) -- (3,-2);

          \draw [thin, dashed] (4,-1) -- (6.5,0);
          \draw [thin, dashed] (4,-2) -- (6.5,-4);

          \begin{scope}[xshift=6.5cm]
          \draw (0,-4) -- (0,0) -- (4,0);

          \node (x1) at (1,0.3) {$x_1 t_1$};
          \node (x2) at (3,0.3) {$x_2 t_1$};

          \node (y1) at (-0.5,-1) {$y_1 t_2$};
          \node (y2) at (-0.5,-2.5) {$y_2 t_2$};

          \node (xy1) at (1,-1) {$1$};
          \node (xy2) at (3,-2.5) {$1$};

          \draw[dotted] (1,0) -- (1,-0.7);
          \draw[dotted] (3,0) -- (3,-2.2);
          \draw[dotted] (0,-1) -- (0.7,-1);
          \draw[dotted] (0,-2.5) -- (2.7,-2.5);
          \end{scope}
        \end{tikzpicture}
      \end{center}
      \caption{%
        Example of shape of~$M^G_a$. Here, $s_i,t_i$ are in~$T$ with $\bar s_1 <_{G/H} \bar t_1$,
        $\bar t_2 <_{G/H} \bar s_2$, and $t_1 a t_2^{-1}, s_1 a s_2^{-1} \in H$.
        Furthermore, $x_1 <_H x_2$ and~$y_1 <_H y_2$ are in~$H$,
        and satisfy~$(x_i t_1) a = y_i t_2$, or equivalently $x_i r = y_i$ with $r = t_1 a t_2^{-1}$.
        The general shape of $M^G_a$ (left matrix) corresponds to $M^{G/H}_a$,
        while the block $P^{t_1,t_2}_a$ (right matrix) is equal to $M^H_r$.
      }
      \label{fig:group-extension-matrix}
    \end{figure}
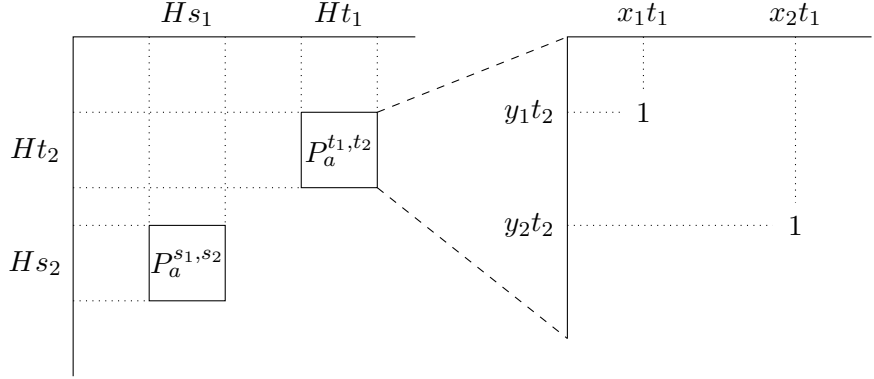

    Define~$r \eqdef t_1 a t_2^{-1} \in H$.
    We claim that~$P^{t_1,t_2}_a$ is isomorphic to~$M^H_r$.
    Indeed, right product by~$t_1$ (resp.~$t_2$) is an increasing bijection from~$H$ to~$H t_1$ (resp.~$H t_2$),
    i.e.\ from the set of column indices (resp.\ row indices) of~$M^H_r$ to that of~$P^{t_1,t_2}_a$.
    Furthermore, these bijections transform right product by~$a$ into right product by~$r$,
    in that for all~$x \in H$
    \begin{equation}
      (x t_1) a = x (t_1 a t_2^{-1}) t_2 = (x r) t_2.
    \end{equation}
    Thus~$P^{t_1,t_2}_a \iso M^H_r$, and~$\stww(P^{t_1,t_2}_a) \le k$ follows from the hypotheses.
  \end{proof}

  We now prove the three points of the lemma using the claim.
  \begin{enumerate}
    \item is immediate.
    \item Given a uniform bound on~$\stww(M^H_r)$ for all~$r \in H$, and a bound on~$\stww(M^{G/H}_a)$,
      the claim gives a bound on~$\stww(M^G_a)$.
      Remark that since the bound on~$\stww(M^H_r)$ is used for~$r = t_1 a t_2^{-1}$
      for all appropriate choices of~$t_1,t_2$, the uniform bound is a priori required.
    \item If~$G/H$ is finite however, there are only finitely many~$r = t_1 a t_2^{-1}$
      for which the bound on~$\stww(M^H_r)$ is used (for~$a$ fixed),
      hence the uniform bound in the previous argument is no longer required.
  \end{enumerate}
\end{proof}

With the previous results, we can give interesting examples of groups with finite twin-width.
The class of \emph{solvable groups} can be defined inductively as follows.
\begin{itemize}
  \item The trivial group is solvable.
  \item If~$H \lhd G$, $H$ is solvable, and~$G/H$ is abelian, then~$G$ is solvable.
\end{itemize}
Solvable groups notably contain nilpotent groups.
\begin{proposition}
  \label{prop:solvable-tww}
  Solvable groups have uniform twin-width 2.
\end{proposition}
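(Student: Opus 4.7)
The plan is to proceed by a straightforward induction on the inductive definition of solvable groups given in the excerpt. All the machinery is already in place: Proposition~\ref{prop:abelian-tww} handles the abelian case, and \cref{lem:extension-gen}(1) handles the extension step.

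For the base case, the trivial group has uniform twin-width at most~$2$ since its action on itself is the single~$1 \times 1$ matrix. For the inductive step, suppose $G$ is solvable because $H \lhd G$ with $H$ solvable and $G/H$ abelian. By the inductive hypothesis, $\utww(H) \le 2$; and by \cref{prop:abelian-tww}, $\utww(G/H) \le 2$. Since $H \lhd G$, the quotient $G/H$ is a group, and right multiplication by any $a \in G$ on $G/H$ coincides with its action as a group element on itself, so the uniform twin-width of the $G$-action on $G/H$ is bounded by $\utww(G/H) \le 2$. Applying \cref{lem:extension-gen}(1) then yields
\[ \utww(G) \le \max\left(\utww(H), \utww(G/H)\right) \le 2. \]

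The induction is well-founded because the given definition only generates groups admitting a finite derived series, that is, a finite chain $\{e\} = H_0 \lhd H_1 \lhd \cdots \lhd H_n = G$ with each $H_{i+1}/H_i$ abelian; the induction is simply on the length $n$ of this chain. The lower bound $\utww(G) \ge 2$ for non-trivial $G$ is immediate from the observation already used in \cref{prop:ordered-tww}, that any non-trivial bijection matrix has strict twin-width at least~$2$.

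There is no real obstacle here: the result is an essentially immediate corollary of \cref{prop:abelian-tww} together with the extension lemma, which is precisely the sort of application \cref{lem:extension-gen} was designed for. The only minor point worth noting is that one uses the stronger form of \cref{lem:extension-gen} (that the bound is the maximum of the two, not merely finiteness), which is crucial since iterating the extension step through a derived series of unbounded length would otherwise degrade the bound.
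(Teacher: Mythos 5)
Your proof is correct and follows the same route as the paper: the paper's proof is literally ``Immediate by \cref{prop:abelian-tww,cor:extension},'' which is exactly the combination of the abelian base case with the extension bound (\cref{cor:extension} being the reformulation of \cref{lem:extension-gen}(1) for normal subgroups) that you spell out. You also correctly flag the key point that the extension lemma gives the \emph{maximum} rather than mere finiteness, which is what allows the bound to survive a derived series of arbitrary finite length.
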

\begin{proof}
  Immediate by \cref{prop:abelian-tww,cor:extension}.
\end{proof}

If~$\Gamma$ is a group finitely generated by~$S$, its \emph{growth function} is
\[ g_{(\Gamma,S)} (r) \eqdef \card{B(1_\Gamma,r)}, \]
where~$B(1_\Gamma,r)$ is the ball of radius~$r$ centered on the identity element in~$\cay(\Gamma,S)$.
This function depends on the choice of~$S$, but whether or not it is a polynomial,
or exponential function, is independent of~$S$.
This defines the classes of groups with polynomial or exponential growth.

By a celebrated result of Gromov, groups of polynomial growth are virtually nilpotent.
\begin{theorem}[Gromov \cite{gromov81polynomial}]
  \label{thm:gromov-polynomial-growth}
  If~$G$ is a group of polynomial growth, then~$G$ admits a nilpotent subgroup~$H$ of finite index.
\end{theorem}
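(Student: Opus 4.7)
The plan is to follow Gromov's original strategy, which bridges the combinatorial hypothesis of polynomial growth with the structure theory of locally compact groups by way of asymptotic cones. First, I would fix a finite generating set $S$ of $G$ and consider the sequence of rescaled pointed metric spaces $(G, \tfrac{1}{n} d_S, 1_G)$. Polynomial growth gives uniform doubling estimates on balls, which is precisely what is needed to extract a pointed Gromov--Hausdorff limit along a non-principal ultrafilter, the \emph{asymptotic cone} of $G$. A careful analysis shows that this cone is a complete, locally compact, homogeneous, finite-dimensional metric space.

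Second, I would invoke the Montgomery--Zippin--Gleason solution to Hilbert's fifth problem: a locally compact, finite-dimensional, connected, locally connected topological group is a Lie group. Applied to the identity component of the isometry group of the asymptotic cone, this produces a connected Lie group $L$ acting transitively on the cone. Transporting this structure back to $G$ by an approximation argument yields a homomorphism from a finite-index subgroup of $G$ into $L$ with infinite image, and exploiting the structure of connected Lie groups (and ruling out degenerate cases using polynomial growth) one extracts a surjection $H \twoheadrightarrow \Zz$ for some finite-index $H \le G$.

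Having such a surjection, I would conclude by induction on the degree of polynomial growth: its kernel has strictly smaller growth degree, hence is virtually nilpotent by induction, and virtual nilpotency is preserved under extension by $\Zz$ (by a standard argument refining the finite-index nilpotent subgroup to a characteristic one and using that the group of automorphisms of a finitely generated nilpotent group is well-behaved). The main obstacle is clearly the passage from the combinatorial hypothesis to the Lie-theoretic machinery, that is, the construction and qualitative analysis of the asymptotic cone: this is the geometric heart of Gromov's argument and the reason the theorem is considered deep. Alternative proofs such as Kleiner's via spaces of bounded harmonic functions, or the quantitative refinement of Shalom and Tao, avoid ultrafilters but rely on comparably nontrivial analytic input.
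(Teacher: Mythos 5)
The paper does not prove this theorem; it is Gromov's celebrated 1981 result, cited as a black box and used directly to deduce that groups of polynomial growth have finite uniform twin-width (\cref{prop:polynomial-growth-tww}). So there is no proof in the paper for your sketch to diverge from; what you have written is a recognisable summary of Gromov's original argument (asymptotic cone via Gromov--Hausdorff limits, Hilbert's fifth problem in the Montgomery--Zippin--Gleason form, transport to a finite-index subgroup with a map onto $\Zz$, then induction on the growth degree), plus a correct pointer to the Kleiner and Shalom--Tao alternatives.

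One inaccuracy worth flagging in the inductive step: it is \emph{not} true in general that virtual nilpotency is preserved under extensions by $\Zz$. The group $\Zz^2 \rtimes_A \Zz$ with $A$ a hyperbolic matrix such as $\begin{pmatrix} 2 & 1 \\ 1 & 1 \end{pmatrix}$ is a $\Zz$-extension of $\Zz^2$ yet has exponential growth and is not virtually nilpotent. What Gromov's induction actually uses is that a $\Zz$-extension of a virtually nilpotent group which \emph{itself has polynomial growth} is virtually nilpotent; this is the content of the Milnor--Wolf theorem on polycyclic and solvable groups of polynomial growth, and the polynomial growth hypothesis forces the conjugation action of the $\Zz$-generator to be ``unipotent up to finite order''. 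You should also note that the induction requires the kernel of the surjection onto $\Zz$ to be finitely generated and of strictly smaller growth degree, which is again part of the Milnor--Wolf circle of ideas. With these two points tightened, the sketch is a faithful outline of the standard proof.
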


\begin{proposition}
  \label{prop:polynomial-growth-tww}
  Groups of polynomial growth have finite uniform twin-width.
\end{proposition}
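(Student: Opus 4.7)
The plan is to combine Gromov's theorem with the stability results already established. Given $G$ of polynomial growth, \cref{thm:gromov-polynomial-growth} produces a nilpotent subgroup $H \le G$ of finite index $k \eqdef [G:H]$. Since nilpotent groups are a special case of solvable groups, \cref{prop:solvable-tww} gives $\utww(H) \le 2$. Then \cref{cor:finite-index} on finite-index supergroups yields
\[ \utww(G) \le \max(\utww(H), k) \le \max(2,k), \]
which is finite.

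Thus the proof is essentially a three-line concatenation: Gromov's theorem, nilpotent $\Rightarrow$ solvable $\Rightarrow$ uniform twin-width $2$, and stability under passing to a finite-index supergroup. There is no real obstacle here, since all of the substantive work—both the group-theoretic input (Gromov) and the combinatorial stability under extensions (\cref{lem:extension-gen} and its corollary for finite index)—has already been done. The only thing to verify is that the hypotheses of \cref{cor:finite-index} are satisfied, which they are trivially: $H$ has finite uniform twin-width and finite index in $G$.

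One minor point worth flagging in the write-up is that the proposition only asserts \emph{finite} uniform twin-width, not a uniform constant (like the bound $2$ for solvable groups), because the index $k$ depends on $G$. In particular, the bound obtained is $\max(2,k)$, which cannot be improved purely by this argument since finite groups of arbitrarily large order can appear as $G/H$.
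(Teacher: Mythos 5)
Your proof is correct and is exactly the paper's argument: Gromov's theorem to obtain a nilpotent (hence solvable) finite-index subgroup $H$ with $\utww(H)\le 2$, followed by \cref{cor:finite-index}. The paper simply states this as ``immediate'' from \cref{thm:gromov-polynomial-growth,prop:solvable-tww,cor:finite-index}; your closing remark that the bound $\max(2,[G:H])$ depends on $G$ is a sensible observation and consistent with the proposition claiming only finiteness.
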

\begin{proof}
  Immediate by \cref{thm:gromov-polynomial-growth,prop:solvable-tww,cor:finite-index}.
\end{proof}

When~$H \lhd G$, \cref{lem:extension-gen} gives an upper bound on~$\utww(G)$ by~$\utww(H)$ and~$\utww(G/H)$.
There is also a trivial lower bound~$\utww(G) \ge \utww(H)$.
However~$\utww(G/H)$ gives no lower bound on~$\utww(G)$ in general:
it can be that~$\utww(G) = 2$ and~$G/H$ has infinite twin-width,
namely when~$G$ is a free group and~$G/H$, a finitely generated group with infinite twin-width.
We can however give a result when~$H$ is finite.

\begin{lemma}
  \label{lem:finite-quotient}
  Let~$G$ be a group and~$N \lhd G$ a normal subgroup with $\card{N} = t < \infty$.
  Then~$G$ has finite twin-width if and only if~$G/N$ does.
  Furthermore
  \begin{align*}
    \utww(G) & \le \max(\utww(G/N),t),~\text{and}\\
    \utww(G/N) & \le 2^{t^{O(\utww(G) \cdot t)}}.
  \end{align*}
\end{lemma}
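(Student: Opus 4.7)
The first inequality $\utww(G) \le \max(\utww(G/N), t)$ is a direct consequence of \cref{lem:extension-gen}(1): since $N$ has order~$t$, every bijection matrix of the right action of~$N$ on itself is $t \times t$, hence has strict twin-width at most~$t$, so $\utww(N) \le t$. The implication ``$G/N$ has finite twin-width $\implies G$ has finite twin-width'' likewise follows from \cref{lem:extension-gen}(2) applied with $H = N$, as any finite group has finite uniform twin-width.

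For the harder bound on $\utww(G/N)$, set $k = \utww(G)$ and let~$<$ be an order on~$G$ witnessing it. Define the order~$<'$ on~$G/N$ by comparing the $<$-minima of cosets: $C <' C'$ iff $\min_< C < \min_< C'$. Assign to each coset~$C$ the canonical representative $\tilde c \eqdef \min_< C$; crucially, if $C <' C'$ then $\tilde c < \tilde c'$, so choosing canonical representatives is monotone. Fix $a \in G/N$ and a lift $\tilde a \in G$, and suppose the action matrix $M_a^{<'}$ contains a $K$-grid witnessed by cosets $x_{i,j}, y_{i,j}$ with $x_{i,j} \cdot a = y_{i,j}$ satisfying the monotonicity of \cref{rmk:grid-witness}. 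The representatives $\tilde x_{i,j}, \tilde y_{i,j}$ then inherit monotonicity in~$<$. Since $N \tilde x_{i,j} \tilde a = N \tilde y_{i,j}$, normality of~$N$ lets us write $\tilde x_{i,j} \tilde a = \tilde y_{i,j} n'_{i,j}$ for some $n'_{i,j} \in N$ (via $n \tilde y = \tilde y (\tilde y^{-1} n \tilde y)$).

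Color the cell~$(i,j)$ by $n'_{i,j} \in N$, giving a coloring of a $K \times K$ grid with at most~$t$ colors. The Ramsey argument from the proof of \cref{lem:grid-ramsey} extracts a monochromatic $K' \times K'$ subgrid whenever $K \ge t^{tK'}$. On this subgrid, a fixed $n' \in N$ satisfies $\tilde y_{i,j} = \tilde x_{i,j} \cdot b$ with $b \eqdef \tilde a (n')^{-1}$, which (by \cref{rmk:grid-witness} and the monotonicity of the representatives) is a $K'$-grid in the matrix $M_b^<$ of~$b$ acting on~$(G,<)$. Since $\stww(M_b^<) \le \utww(G) = k$, \cref{thm:grid-theorem-matrix} forces $K' \le g(k) \sim 2k$, and solving $K \le t^{tK'}$ gives $K \le t^{O(tk)}$. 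Hence $\gv(M_a^{<'}) \le t^{O(tk)}$, and a second application of \cref{thm:grid-theorem-matrix} yields $\stww(M_a^{<'}) \le f(t^{O(tk)}) = 2^{t^{O(tk)}}$. Taking the supremum over $a \in G/N$ proves the stated bound. The same argument run for a single~$a$ at a time gives the remaining non-uniform implication: only finitely many lifts $\tilde a (n')^{-1}$, $n' \in N$, are involved, each with finite twin-width by hypothesis, hence $\stww(M_a^{<'})$ is finite.

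The main obstacle is the Ramsey extraction: without forcing $n'_{i,j}$ to be constant, a $K$-grid in $M_a^{<'}$ only yields a mixture of right-translations in~$G$, not one we can directly compare to~$\utww(G)$. Writing $n \tilde y$ as $\tilde y n'$ via normality of~$N$ is essential so that this translation becomes a genuine \emph{right} action of some $b \in G$, matching the matrix convention used everywhere in the paper.
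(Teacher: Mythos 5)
Your proof is correct and takes essentially the same route as the paper: both order $G/N$ by lifting to monotone representatives, both observe that the quotient action of a fixed $a\in G/N$ is governed by right-multiplication by elements of the (size-$t$) coset $a$, and both invoke the grid theorem together with a Ramsey-type extraction to pass the bound. The only cosmetic difference is that the paper packages the Ramsey step by introducing an auxiliary matrix $S$ (the restriction of $\bigvee_{x\in a} M^G_x$ to the representatives), observes $M^{G/N}_a\le S$ entrywise, and then applies \cref{lem:grid-ramsey} as a black box, whereas you unroll the same Ramsey argument by hand, explicitly coloring grid cells by $n'_{i,j}\in N$ and extracting a monochromatic subgrid to land in a single $M^<_b$ with $b=\tilde a(n')^{-1}\in a$.
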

\begin{proof}
  Using \cref{cor:extension} and the trivial bound~$\utww(N) \le t$,
  if~$G/N$ has finite twin-width then so does~$G$, and $\utww(G) \le \max(\utww(G/N),t)$.

  For the converse, consider a total order~$<$ on~$G$ witnessing its twin-width.
  For~$a \in G/N$, choose a representative~$\iota(a) \in a$.
  These are used to order~$G/N$ according to~$<$, by
  \[ \forall a,b \in G/N, \quad a <_{G/N} b \iff \iota(a) < \iota(b). \]
  For~$x \in G$, let~$M^G_x$ be the matrix of~$x$ acting on~$(G,<)$ by right product, and similarly in~$G/N$.
  Fix some~$a \in G/N$. We want a bound on~$\stww(M^{G/N}_a)$.

  Define the matrix~$S$ with~$(G/N,<)$ as row and column indices by
  \[ \forall b_1,b_2 \in G/N, \quad S(b_1,b_2) = 1 \iff \iota(b_1)^{-1} \iota(b_2) \in a. \]
  It can be seen as the superposition of~$M^G_x$ for~$x \in a$,
  restricted to the submatrix with~$\iota(G/N)$ as indices.
  If~$b_1 a = b_2$, then~$\iota(b_1)^{-1} \iota(b_2) \in a$,
  which in matrices means that~$M^{G/N}_a(b_1,b_2) = 1$ implies~$S(b_1,b_2) = 1$.
  It follows that~$\stww(M^{G/N}_a) \le \stww(S)$.

  Suppose now that~$\utww(G) \le k$ is witnessed by the order~$<$, meaning that for~$x \in G$, $\stww(M^G_x) \le k$.
  Using \cref{thm:grid-theorem-matrix,lem:grid-ramsey},
  and that~$S$ is contained in the superposition of~$t$ matrices~$M^G_x$,
  we obtain
  \[ \stww(M^{G/N}_a) \le \stww(S) \le 2^{t^{O(tk)}}. \]
  If~$G$ merely has finite twin-width witnessed by~$<$,
  there still is a bound on~$\stww(M^G_x)$ for~$x \in a$ with~$a$ fixed, because~$a$ is finite.
  The same arguments show that $\stww(M^{G/N}_a)$ is finite, hence~$G/N$ has finite twin-width.
\end{proof}

\subsection{Infinite products and wreath products}
Given~$(G_i)_{i \in I}$ a family of groups, their \emph{direct product} is denoted by~$\prod_{i \in I} G_i$.
Their \emph{direct sum}~$\bigoplus_{i \in I} G_i$ is the subgroup of the direct product
consisting of finitely supported tuples, i.e.\ the~$(x_i)_{i \in I}$ with~$x_i \in G_i$
such that all but finitely many~$x_i$ are the neutral element.
When~$I$ is finite, the two notions coincide.

\Cref{lem:extension-gen} of course implies that finite products preserve twin-width and uniform twin-width.
This can be generalised to infinite direct sum, and for uniform twin-width only, to infinite direct products.

\begin{lemma}
  \label{lem:direct-product}
  Given~$(G_i)_{i \in I}$ a possibly infinite family of groups,
  consider $S = \bigoplus_{i \in I} G_i$ their direct sum, and $P = \prod_{i \in I} G_i$ their direct product.
  Then~$S$ has finite twin-width if and only if all~$G_i$ do, and
  \[ \utww(S) = \utww(P) = \sup_{i \in I} \utww(G_i). \]
\end{lemma}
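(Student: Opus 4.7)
The plan is to reduce the claim to \cref{lem:tww-tensor-product} via a lexicographic construction. First, the lower bounds $\sup_{i \in I} \utww(G_i) \le \utww(S) \le \utww(P)$ are immediate from \cref{lem:subgroup}, since each $G_i$ embeds naturally in $S$ (as tuples supported on $\{i\}$), and $S$ in turn embeds in $P$.

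For the upper bound on $\utww(P)$, I would invoke the well-ordering principle to fix a well-order on $I$, pick for each $i$ an order $<_i$ on $G_i$ witnessing $\utww(G_i)$, and equip $P$ with the resulting lexicographic order $<$: for distinct $x, y \in P$, set $x < y$ iff $x_{i_0} <_{i_0} y_{i_0}$, where $i_0$ is the minimum index on which $x$ and $y$ differ (well-defined thanks to the well-ordering of $I$). For any $g = (g_i) \in P$, right multiplication by $g$ on $(P, <)$ is precisely the product of the right multiplications by $g_i$ on $(G_i, <_i)$, so its bijection matrix $M^P_g$ is the infinite tensor product appearing in the hypothesis of \cref{lem:tww-tensor-product}. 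That lemma then yields
\[ \stww(M^P_g) = \sup_{i \in I} \stww(M^{G_i}_{g_i}) \le \sup_{i \in I} \utww(G_i). \]
Taking the supremum over $g \in P$ gives $\utww(P) \le \sup_i \utww(G_i)$. Restricting $<$ to $S \subseteq P$, the matrix of right multiplication by $g \in S$ on $S$ is a submatrix of $M^P_g$, so the same bound carries over to $\utww(S)$; together with the lower bounds, the three quantities coincide.

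It remains to prove the non-uniform equivalence: $S$ has finite twin-width iff every $G_i$ does. The converse again follows from \cref{lem:subgroup}. For the forward direction I would use the same lexicographic construction, with each $<_i$ now merely witnessing finiteness of twin-width of $G_i$. The key observation is that any $g \in S$ has \emph{finite} support $J$ by definition of the direct sum: for $i \notin J$, $g_i$ acts as the identity, whose bijection matrix is monotone and so has strict twin-width at most $2$. Hence $\sup_{i \in I} \stww(M^{G_i}_{g_i})$ is the maximum of $2$ and of finitely many finite values, which is finite; \cref{lem:tww-tensor-product} then gives $\stww(M^P_g) < \infty$, and hence $\stww(M^S_g) < \infty$ as well.

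The main (minor) obstacle is matching the hypotheses of \cref{lem:tww-tensor-product}, which forces the use of the well-ordering principle so that the lexicographic order on $P$ is well-defined. A further point worth emphasising is that the non-uniform equivalence is stated for $S$ but not for $P$: an element of $P$ can have infinite support, so the sup of component twin-widths might be infinite even if each individual component action has finite twin-width, and this approach does not establish finite twin-width of $P$ in the non-uniform setting.
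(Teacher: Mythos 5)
Your proof follows essentially the same route as the paper's: well-order $I$, pick witnessing orders $<_i$ on each $G_i$, equip $S$ and $P$ with the lexicographic order, and invoke \cref{lem:tww-tensor-product}; for the non-uniform claim, exploit the finite support of elements of $S$ to get a uniform bound across the finitely many nontrivial factors. You spell out the lower bounds via \cref{lem:subgroup} and the passage from $P$ to $S$ via submatrices a little more explicitly than the paper does, but the argument is the same.
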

\begin{proof}
  Choose some well-order on~$I$, a total order~$<_i$ on each~$G_i$ witnessing its twin-width,
  and order~$S$ and~$P$ lexicographically.
  Assuming that~$\utww(G_i) \le k$ is witnessed by~$<_i$,
  \cref{lem:tww-tensor-product} immediately gives~$\utww(P) \le k$, and a fortiori~$\utww(S) \le k$.
  This proves the equalities on uniform twin-width.

  For the claim on non-uniform twin-width, assume that all~$G_i$ have finite twin-width witnessed by~$<_i$,
  and let~$x = (x_i)_{i \in I} \in S$.
  Again the matrix of~$x$ acting on~$(S,<)$ is the tensor product of the matrices of~$x_i$ acting on~$(G_i,<_i)$.
  Since~$x$ is finitely supported, all but finitely many of these matrices are identity,
  while the remaining ones have finite twin-width.
  Thus there is a universal bound on the twin-width of these matrices,
  and we conclude using \cref{lem:tww-tensor-product} once again.
\end{proof}

Given any groups~$G,H$, one can define the (complete) \emph{wreath product}
\[ G \wr H \eqdef G^H \rtimes H \]
where~$H$ acts by permuting indices by right product,
i.e.\ for~$h,h' \in H$ and~$x \in G^H$, the action of~$h$ is
\[ (x \cdot h)(h') \eqdef x(h'h). \]
One may also consider the restricted wreath product, with a direct sum replacing the direct product,
and the same action of~$H$.
\[ G \wr_r H \eqdef \left( \bigoplus_{h \in H} G \right) \rtimes H. \]
It is a subgroup of the complete wreath product.

\begin{lemma}
  \label{lem:wreath-product}
  $\utww(G \wr H) = \max(\utww(G), \utww(H)).$
\end{lemma}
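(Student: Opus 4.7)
The proof should reduce directly to the two preceding stability results. Recall that by definition $G \wr H = G^H \rtimes H$, where $G^H$ is the direct product of $H$-indexed copies of $G$, and $H$ acts on the product by permuting coordinates via right multiplication. So the plan is simply to combine the semidirect product identity (Corollary~\ref{cor:semidirect-product}) with the infinite direct product identity (Lemma~\ref{lem:direct-product}).

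Concretely, first I would apply Corollary~\ref{cor:semidirect-product} to the semidirect decomposition $G \wr H = G^H \rtimes H$, yielding
\[ \utww(G \wr H) = \max\bigl( \utww(G^H),\, \utww(H) \bigr). \]
Then I would invoke Lemma~\ref{lem:direct-product}, applied to the constant family $(G_h)_{h \in H}$ with $G_h = G$, to conclude that $\utww(G^H) = \utww(G)$. Substituting yields the claimed equality $\utww(G \wr H) = \max(\utww(G), \utww(H))$.

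Essentially no real obstacle arises: the content of the lemma is packaged entirely in the two tools cited above. The only small thing worth double-checking is that the corollary and lemma apply without restriction when $H$ is infinite, so that $G^H$ is a genuine infinite direct product; this is ensured because Lemma~\ref{lem:direct-product} is stated for arbitrary index sets, and Lemma~\ref{lem:extension-gen} (from which Corollary~\ref{cor:semidirect-product} follows) is proved for arbitrary groups and arbitrary subgroups with no cardinality hypothesis. The restricted wreath product $G \wr_r H = \bigl( \bigoplus_{h \in H} G \bigr) \rtimes H$ is a subgroup of $G \wr H$, and the same two cited results (the direct-sum half of Lemma~\ref{lem:direct-product}, combined with Corollary~\ref{cor:semidirect-product}) yield the analogous equality $\utww(G \wr_r H) = \max(\utww(G), \utww(H))$, should the authors wish to state it as well.
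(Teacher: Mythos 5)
Your proof is correct and is exactly the paper's argument: decompose $G \wr H = G^H \rtimes H$, apply \cref{cor:semidirect-product} for the semidirect product, and apply \cref{lem:direct-product} to reduce $\utww(G^H)$ to $\utww(G)$. The paper's proof is the one-liner ``By \cref{lem:direct-product,cor:semidirect-product}.''
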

\begin{proof}
  By \cref{lem:direct-product,cor:semidirect-product}.
\end{proof}

Interesting groups can be constructed through wreath products,
the most common example being the lamplighter group~$(\Zz / 2\Zz) \wr_r \Zz$,
which therefore has uniform twin-width 2.

\subsection{Group actions}
\label{sec:group-action}
The twin-width of a group is defined through its self-action by product.
In this section, we consider the twin-width of other actions,
and show a relation to the twin-width of the group.

\begin{lemma}
  \label{lem:well-ordered-permutations}
  Let~$(X,<_X)$ be a well-ordered set.
  Then there exists a total order~$<_{\Perm_X}$ on the permutation group~$\Perm_X$
  such that for~$\sigma \in \Perm_X$
  \[ \stww_{\Perm_X}(\sigma) \le \stww_X(\sigma) \]
  where the right-hand side is the twin-width of~$\sigma$ acting on~$(X,<_X)$,
  whereas the left-hand side is the twin-width of~$\sigma$ acting on~$(\Perm_X,<_{\Perm_X})$ by product.
\end{lemma}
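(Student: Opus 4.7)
The plan is to embed $(\Perm_X, <_{\Perm_X})$ order-preservingly into the lexicographically ordered function space $X^X = \prod_{x \in X} X$ via the inverse map $\iota : \tau \mapsto \tau^{-1}$, so that right multiplication by $\sigma$ on $\Perm_X$ becomes the restriction of a \emph{coordinatewise} action of $\sigma^{-1}$ on $X^X$, to which \cref{lem:tww-tensor-product} applies directly.

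Concretely, I define $<_{\Perm_X}$ by pulling back the lexicographic order along $\iota$: for distinct $\tau_1, \tau_2 \in \Perm_X$, well-orderedness of $X$ provides a minimum $y_0 \in X$ with $\tau_1^{-1}(y_0) \neq \tau_2^{-1}(y_0)$, and I set $\tau_1 <_{\Perm_X} \tau_2$ iff $\tau_1^{-1}(y_0) <_X \tau_2^{-1}(y_0)$. The point of this choice is the identity
\[ \iota(\tau\sigma) = (\tau\sigma)^{-1} = \sigma^{-1}\circ\tau^{-1} = \sigma^{-1} \circ \iota(\tau), \]
so right multiplication by $\sigma$ on $\Perm_X$ is transported via $\iota$ to the map $X^X \to X^X$, $f \mapsto \sigma^{-1} \circ f$. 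This latter map applies $\sigma^{-1}$ coordinatewise, and its bijection matrix is the $X$-fold tensor product $\bigotimes_{x \in X} \bij{\sigma^{-1}}$, with $X^X$ lex-ordered and $X$ playing the role of the well-ordered index set required by \cref{lem:tww-tensor-product}.

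By \cref{lem:tww-tensor-product}, this tensor product has strict twin-width $\stww(\bij{\sigma^{-1}})$, which equals $\stww_X(\sigma)$ since $\bij{\sigma^{-1}}$ is the transpose of $\bij{\sigma}$ and strict twin-width of matrices is preserved by transposition (the definition of division sequences is symmetric in rows and columns). The matrix of $\sigma$ acting on $(\Perm_X, <_{\Perm_X})$ by right product is then, via $\iota$, precisely the submatrix of the tensor product matrix indexed by $\iota(\Perm_X) \subseteq X^X$ in its inherited order. Since restricting a division sequence to a submatrix yields a division sequence of no greater width, strict twin-width is non-increasing under submatrices, which gives $\stww_{\Perm_X}(\sigma) \le \stww_X(\sigma)$.

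The main subtlety is choosing $<_{\Perm_X}$ so that right multiplication lifts to a coordinatewise---rather than reindexing---action on $X^X$; the inverse trick accomplishes this cleanly, and after that the argument is a direct assembly of \cref{lem:tww-tensor-product} with the standard monotonicity of strict twin-width under submatrices.
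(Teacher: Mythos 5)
Your proof is correct and follows essentially the same route as the paper: both embed $\Perm_X$ order-preservingly into the lexicographically ordered $X^X$ so that right translation by $\sigma$ becomes a coordinatewise (post-composition) action, then invoke \cref{lem:tww-tensor-product} together with monotonicity of strict twin-width under submatrices. The only cosmetic difference is that you realize the embedding via the inverse map $\tau \mapsto \tau^{-1}$ to turn right multiplication into post-composition, whereas the paper achieves the same effect by adopting a postfix composition convention $\sigma\tau := \tau\circ\sigma$ on $\Perm_X$ and embedding directly.
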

\begin{proof}
  To respect the convention that twin-width is defined with regards to right product,
  we will use postfix notations for~$\Perm_X$, i.e.\ for~$\sigma,\tau \in \Perm_X$ and~$x \in X$,
  we write~$x \cdot \sigma = \sigma(x)$ and~$\sigma \tau = \tau \circ \sigma$.

  We order~$X^X$ lexicographically, i.e.\ for any~$f \neq g : X \to X$,
  define $m_{f,g} = \min \setst{x \in X}{x \cdot f \neq x \cdot g}$
  and
  \[ f <_{X^X} g \quad \iff \quad m_{f,g} \cdot f <_X m_{f,g} \cdot g. \]
  The group~$\Perm_X$ acts on~$X^X$ by post-composition,
  that is~$(f \cdot \sigma)(x) = f(x) \cdot \sigma$.
  We define~$<_{\Perm_X}$ by restricting the former order from~$X^X$ to~$\Perm_X$,
  while the restriction of this action is simply right product.

  For~$\sigma \in \Perm_X$, let~$\bij\sigma$ be the matrix of~$\sigma$ acting on~$(X,<_X)$,
  let~$M^X_\sigma$ be the matrix of~$\sigma$ acting on~$(X^X,<_{X^X})$ by post-composition,
  and let~$M^{\Perm_X}_\sigma$ be the matrix of~$\sigma$ acting on~$(\Perm_X,<_{\Perm_X})$ by right product.
  Then~$M^X_\sigma$ is a tensor product of~$\bij\sigma$ with itself,
  hence by \cref{lem:tww-tensor-product}, we have~$\stww(M^X_\sigma) = \stww(\bij\sigma)$.
  Furthermore~$M^{\Perm_X}_\sigma$ is clearly a submatrix of~$M^X_\sigma$, thus
  \[ \stww_{\Perm_X}(\sigma) = \stww(M^{\Perm_X}_\sigma) \le \stww(M^X_\sigma) = \stww(\bij\sigma) = \stww_X(\sigma). \]
\end{proof}

\begin{corollary}
  \label{cor:well-ordered-action}
  Let~$(X,<_X)$ be a well-ordered set, and~$G$ a group acting faithfully on~$X$ on the right.
  If this action has finite twin-width with regards to~$<_X$, then~$G$ has finite twin-width.
  Furthermore if this action has uniform twin-width~$k$ with regards to~$<_X$, then~$\utww(G) \le k$.
\end{corollary}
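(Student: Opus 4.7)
The strategy is to use the previous \cref{lem:well-ordered-permutations} to transfer the action on $X$ to the self-action of $\Perm_X$ by right product, and then restrict back to $G$ via its faithful embedding.

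First, since $G$ acts faithfully on $X$, we have an injective group homomorphism $\iota : G \hookrightarrow \Perm_X$. Apply \cref{lem:well-ordered-permutations} to obtain a total order $<_{\Perm_X}$ on $\Perm_X$ such that for every $\sigma \in \Perm_X$, the twin-width of $\sigma$ acting on $(\Perm_X, <_{\Perm_X})$ by right product is bounded by its twin-width acting on $(X, <_X)$. Restrict this order to $\iota(G)$, and pull it back along $\iota$ to obtain a total order $<_G$ on $G$.

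Next, I would compare the matrix $M^G_\sigma$ of $\sigma \in G$ acting on $(G, <_G)$ by right product with the matrix $M^{\Perm_X}_{\iota(\sigma)}$ of $\iota(\sigma)$ acting on $(\Perm_X, <_{\Perm_X})$. Because $\iota$ is an injective homomorphism, and because $<_G$ is the pullback of $<_{\Perm_X}$, the map $\iota$ is an order-preserving bijection from $G$ onto a subset of $\Perm_X$ which is stable under right product by $\iota(\sigma)$. Therefore $M^G_\sigma$ is isomorphic to the submatrix of $M^{\Perm_X}_{\iota(\sigma)}$ indexed by $\iota(G) \times \iota(G)$. Since twin-width does not increase under taking submatrices, this gives
\[
  \stww(M^G_\sigma) \;\le\; \stww(M^{\Perm_X}_{\iota(\sigma)}) \;\le\; \stww_X(\iota(\sigma)) \;=\; \stww_X(\sigma),
\]
where the middle inequality is \cref{lem:well-ordered-permutations}.

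Finally, both conclusions follow. For the uniform statement, if $\stww_X(\sigma) \le k$ holds for every $\sigma \in G$, then $\stww(M^G_\sigma) \le k$ for every $\sigma \in G$, so the order $<_G$ witnesses $\utww(G) \le k$. For the non-uniform statement, if $\stww_X(\sigma)$ is merely finite for each $\sigma \in G$, then $\stww(M^G_\sigma)$ is finite for each $\sigma \in G$, so $<_G$ witnesses that $G$ has finite twin-width. The only minor subtlety is checking that pulling back the order along $\iota$ is compatible with the identification of the actions, but this is automatic from $\iota$ being a group homomorphism, so I don't anticipate a real obstacle.
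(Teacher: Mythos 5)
Your proof is correct and follows exactly the approach the paper intends with its one-line proof (``Through the faithful action, $G$ is a subgroup of $\Perm_X$''): embed $G$ into $\Perm_X$, apply \cref{lem:well-ordered-permutations}, restrict the resulting order to $\iota(G)$, and observe that the matrix of $g\in G$ acting on $G$ is the submatrix of $M^{\Perm_X}_{\iota(g)}$ indexed by $\iota(G)\times\iota(G)$. You have merely spelled out the restriction-to-subgroup step (the content of \cref{lem:subgroup}) at the level of a fixed order rather than invoking it at the level of $\utww$, which is the honest way to phrase it since $\Perm_X$ itself need not have finite twin-width.
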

\begin{proof}
  Through the faithful action, $G$ is a subgroup of~$\Perm_X$.
\end{proof}

Let~$T$ be the infinite rooted binary tree.
Let~$\Aut(T)$ be the group of automorphisms of~$T$,
i.e.\ bijections on nodes of~$T$ which preserve the root and the parent relation.
The group~$\Aut(T)$ is uncountable, but it has some interesting finitely generated subgroups,
notably the Grigorchuk group~\cite{grigorchuk1980burnside}.

\begin{proposition}
  \label{prop:tww-tree-automorphism}
  The group~$\Aut(T)$ has uniform twin-width~2.
\end{proposition}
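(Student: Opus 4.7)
The plan is to embed $\Aut(T)$ into a countable direct product of finite groups of uniform twin-width~$2$, and then invoke the stability results of \cref{sec:finite-tww}.

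First, for each $n \ge 0$, let $T_n$ denote the rooted binary tree of depth~$n$. A standard description identifies $\Aut(T_n)$ with the iterated wreath product $W_n \eqdef \Zz/2\Zz \wr \dots \wr \Zz/2\Zz$ with $n$ factors, in which each factor records the choice of whether or not to swap the two children at a given node. Since $\Zz/2\Zz$ is a finite cyclic group it has uniform twin-width~$2$ (see \cref{sec:utww-examples}); a straightforward induction on~$n$ using \cref{lem:wreath-product} then yields $\utww(W_n) = 2$ for every~$n$.

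Second, for every~$n$ the truncation-to-depth-$n$ map is a group homomorphism $\rho_n : \Aut(T) \to \Aut(T_n)$, and an automorphism of~$T$ is determined by the collection of its restrictions $(\rho_n)_{n \in \Nn}$ (two automorphisms of~$T$ agreeing on every~$T_n$ agree on all nodes). The combined map is therefore an injective group homomorphism $\Aut(T) \hookrightarrow \prod_{n \in \Nn} \Aut(T_n)$, realising $\Aut(T)$ as a subgroup of an infinite direct product.

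Third, \cref{lem:direct-product} gives $\utww(\prod_{n} \Aut(T_n)) = \sup_n \utww(\Aut(T_n)) = 2$, and \cref{lem:subgroup} then yields $\utww(\Aut(T)) \le 2$. The matching lower bound $\utww(\Aut(T)) \ge 2$ is trivial since $\Aut(T)$ is non-trivial, so $\utww(\Aut(T)) = 2$. No step poses a serious difficulty: the key observation is simply that $\Aut(T)$, being an inverse limit of its finite-depth truncations, sits inside their direct product, while each truncation is an iterated wreath product to which the stability lemmas apply.
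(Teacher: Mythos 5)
Your proof is correct and takes a genuinely different route from the paper's. The paper argues directly on the level of the action: it equips~$T$ with the breadth-first search well-order, reduces via \cref{cor:well-ordered-action} to bounding the uniform twin-width of the action of~$\Aut(T)$ on~$T$, and then observes that each permutation matrix~$\bij{f}$ for $f \in \Aut(T)$ is an iterated substitution of $2\times 2$ diagonal and antidiagonal blocks (a separable permutation matrix), so \cref{lem:tww-substitution} gives $\stww(\bij f) = 2$. You instead argue entirely at the group level: you identify $\Aut(T_n)$ with the iterated wreath product of copies of~$\Zz/2\Zz$ and apply \cref{lem:wreath-product} inductively to get $\utww(\Aut(T_n)) = 2$, then embed $\Aut(T)$ into $\prod_n \Aut(T_n)$ via the truncation maps and close with \cref{lem:direct-product,lem:subgroup}. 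Both proofs eventually route through the substitution machinery, but yours does so indirectly through the pre-established group-theoretic stability lemmas, while the paper's gives a concrete matrix picture (and incidentally identifies the relevant permutations as separable). Your approach has the virtue of being a purely formal consequence of the stability toolkit and is a nice illustration of the general principle that inverse limits of groups of bounded uniform twin-width (being subgroups of direct products) again have bounded uniform twin-width; the paper's approach produces the explicit witnessing order, which is reused later in \cref{sec:queue-number} when comparing with queue number.
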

\begin{proof}
  Let~$<$ be the breadth-first search order on~$T$,
  which orders first by increasing depth, then left-to-right at each level.
  This is a well-order, hence by \cref{cor:well-ordered-action}
  it suffices to prove that the action of~$\Aut(T)$ on~$T$ with regards to~$<$ has uniform twin-width~2.

  Let~$f \in \Aut(T)$, and for~$d \in \Nn$ let~$f_d$ be the action of~$f$ restricted to nodes of depth~$d$.
  Then the bijection matrix~$\bij{f_d}$ is obtained by substituting
  the 2-by-2 diagonal and anti-diagonal matrices inside~$\bij{f_{d-1}}$.
  It follows by \cref{lem:tww-substitution} that~$\stww(\bij{f_d}) = 2$ for all~$d$.
  (These are \emph{separable} permutation matrices, see~\cite[Section~3.2]{Guillemot14}.)
  Finally, the matrix of~$f$ can be seen as the~$\Nn \times \Nn$ diagonal matrix,
  with coefficient~$(d,d)$ substituted by~$\bij{f_d}$, hence it has twin-width 2 by \cref{lem:tww-substitution} again.
\end{proof}

\subsection{Uniform and non-uniform twin-width}
Any group with finite uniform twin-width also has finite twin-width.
We conjecture that the converse does not hold,
and we think the following is a good candidate for a counterexample.

Let~$\Perm_\Zz$ be the group of permutations on~$\Zz$,
and let~$\Perm_\Zz^f$ be the subgroup consisting of finitely supported permutations.
Furthermore, let $T = \setst{x \mapsto x + c}{c \in \Zz}$ the subgroup of translations.
While~$\Perm_\Zz^f$ is not finitely generated, $\Perm_\Zz^f \rtimes T$ is.
The latter is sometimes called lampshuffler group.

Consider the well-order~$0,1,-1,2,-2,\dots$ on~$\Zz$.
The transposition of~$0$ and~$1$  and the translation by~$1$
are two permutations of~$\Zz$ which generate~$\Perm_\Zz^f \rtimes T$,
and whose action on~$\Zz$ clearly has finite twin-width.
Thus, by \cref{lem:perm-composition}, the action of the lampshuffler on~$\Zz$ has finite twin-width,
and by \cref{cor:well-ordered-action}, so does the lampshuffler group itself.

We conjecture that the lampshuffler has infinite uniform twin-width,
and thus separates uniform and non-uniform twin-width.
A justification for this conjecture is that the lampshuffler having finite uniform twin-width
would imply a universal bound on the uniform twin-width of finite groups,
and more generally of elementary amenable groups
(the groups obtained from finite groups and abelian groups by taking
subgroups, quotients, extensions, and direct limits).
This would be surprising.

\begin{proposition}
  \label{lem:utww-finite-groups}
  The following are equivalent for any~$k \in \Nn$.
  \begin{enumerate}
    \item \label{item:finite-groups} For all finite group~$G$, $\utww(G) \le k$.
    \item \label{item:lampshuffler} $\utww(\Perm_\Zz^f \rtimes T) \le k$.
    \item \label{item:elementary-amenable} For all elementary amenable group~$G$, $\utww(G) \le k$.
  \end{enumerate}
\end{proposition}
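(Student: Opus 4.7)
The plan is to establish the cyclic chain \cref{item:elementary-amenable} $\Rightarrow$ \cref{item:lampshuffler} $\Rightarrow$ \cref{item:finite-groups} $\Rightarrow$ \cref{item:elementary-amenable}, after which the three conditions will be equivalent. Most of the work is already packaged in the stability lemmas of this section; only one direction requires a genuine structural input about elementary amenable groups.

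The implication \cref{item:elementary-amenable} $\Rightarrow$ \cref{item:lampshuffler} will be immediate once we observe that the lampshuffler $\Perm_\Zz^f \rtimes T$ is itself elementary amenable: $T \iso \Zz$ is abelian, and $\Perm_\Zz^f$ is the direct limit of the finite symmetric groups $\Perm_{\{-n,\dots,n\}}$, so the whole group is obtained from finite and abelian groups by a direct limit and a semidirect product (an extension). For \cref{item:lampshuffler} $\Rightarrow$ \cref{item:finite-groups}, I would use Cayley's theorem to embed any finite $G$ into some $\Perm_n$, then extend permutations by the identity on $\Zz \setminus \{0,\dots,n-1\}$ to embed $\Perm_n$ into $\Perm_\Zz^f \le \Perm_\Zz^f \rtimes T$. \cref{lem:subgroup} then propagates the bound $k$ down to $G$.

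The core implication is \cref{item:finite-groups} $\Rightarrow$ \cref{item:elementary-amenable}. My plan is to invoke the classical theorem of Chou that the class of elementary amenable groups coincides with the smallest class containing finite and abelian groups and closed under extensions and direct limits only---no separate closure under subgroups or quotients is needed. With this reduction in hand, a transfinite induction on the resulting hierarchy settles the direction: the base case is handled by hypothesis \cref{item:finite-groups} for finite groups and by \cref{prop:abelian-tww} for abelian groups, where one also notes that hypothesis \cref{item:finite-groups} forces $k \ge 2$ since $\utww(\Zz/2\Zz) = 2$; the inductive step follows from \cref{cor:extension} for extensions and \cref{lem:group-limit} for direct limits.

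The hard part is precisely this appeal to Chou's theorem: if one tried to proceed directly from the informal definition of elementary amenability as closure under all four operations (subgroups, quotients, extensions, direct limits), one would need $\utww \le k$ to be preserved under arbitrary quotients, which in general fails---for instance, the finitely generated group of \cref{thm:infinite-tww} has infinite twin-width yet is a quotient of a free group of uniform twin-width~$2$. Chou's theorem lets us bypass quotients entirely, making the induction run with only the two operations under which uniform twin-width is known to behave well.
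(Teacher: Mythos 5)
Your proof is correct and follows essentially the same route as the paper: both reduce the key implication \eqref{item:finite-groups} $\Rightarrow$ \eqref{item:elementary-amenable} to Chou's theorem that elementary amenable groups are generated from finite and abelian groups by extensions and direct limits alone, then invoke \cref{cor:extension,lem:group-limit}. The other two implications and the observation $k \ge 2$ are handled identically.
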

\begin{proof}
  The lampshuffler~$\Perm_\Zz^f \rtimes T$ is elementary amenable and contains all finite groups,
  hence~\eqref{item:elementary-amenable} implies~\eqref{item:lampshuffler} which implies~\eqref{item:finite-groups}.
  Suppose now that all finite groups have uniform twin-width at most~$k$,
  and recall that abelian groups have uniform twin-width~2.
  Remark that $k \ge 2$, because only the trivial group has twin-width~1.
  It is known that all elementary amenable groups can be obtained from finite groups and abelian groups
  using only extensions and direct limits~\cite{chou1980elementary}.
  It follows by \cref{cor:extension,lem:group-limit} that all elementary amenable groups have twin-width at most~$k$.
\end{proof}

One could replace $\Perm_\Zz^f \rtimes T$ with just $\Perm_\Zz^f$ in the result:
they have the same uniform twin-width by \cref{cor:semidirect-product}.

Remark that the previous result means that if all elementary amenable groups have finite uniform twin-width,
then there is in fact a universal bound on their uniform twin-width,
because the lampshuffler is elementary amenable.

\section{Constructing groups with infinite twin-width}
\label{sec:infinite-tww}
This section constructs a finitely generated group with infinite twin-width.

\subsection{Embedding graphs of high twin-width}
\label{sec:cubic-embedding}
The construction embeds an appropriate sequence of graphs with bounded degree and unbounded twin-width in a Cayley graph.
The existence of such graphs is given by counting arguments (see \cref{thm:tww-small}).
The embedding into a group uses the following result of Osajda, based on graphical small cancellation theory.

\begin{theorem}[{\cite[Theorem~3.2]{osajda2020small}}]
  \label{thm:graph-sequence-embedding}
  Consider constants~$A > 0$ and~$D \in \Nn$, and let~$(G_n)_{n \in \Nn}$ be a sequence of connected graphs such that
  \begin{enumerate}
    \item $\Delta(G_n) \le D$,
    \item $\girth(G_n) \ge \girth(G_{n-1}) + 6$, and
    \item $\diam(G_n)/\girth(G_n) \le A$.
  \end{enumerate}
  Then there exists a group~$\Gamma$ finitely generated by~$S$ such that
  every~$G_n$ isometrically embeds in~$\cay(\Gamma,S)$.
\end{theorem}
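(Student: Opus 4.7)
The plan is to invoke graphical small cancellation theory. The target is to produce a finite alphabet $S$, together with a labelling of the edges of the disjoint union $\bigsqcup_n G_n$ by $S \cup S^{-1}$, satisfying the graphical $Gr'(1/6)$-condition; then $\Gamma$ will be the group presented by $S$ with relators the cycle words read on the labelled graphs. With this in hand, each $G_n$ carries a canonical label-preserving morphism to $\cay(\Gamma,S)$, and the Greendlinger lemma for graphical presentations—any reduced van Kampen diagram over such a presentation has a face meeting the boundary along a long exterior arc—forces this morphism to be an isometric embedding, since any geodesic shortcut in $\cay(\Gamma,S)$ between two points of the image would otherwise produce a diagram of a forbidden shape.

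The labelling must satisfy two properties. Locally, at each vertex, distinct incident edges must carry distinct labels; since $\Delta(G_n) \le D$, this is easy once $\card{S} \ge D$. Globally, every \emph{piece}—a labelled path that appears in at least two distinct places in $\bigsqcup_n G_n$—must have length strictly less than $\tfrac{1}{6}\girth(G_n)$ whenever it occurs inside $G_n$. The combination of the three hypotheses on the sequence is tailored precisely to make this feasible.

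The hard part will be producing such a labelling with $S$ finite, and here I would argue probabilistically, with $\card{S}$ a sufficiently large power of $D$ whose exponent depends on $A$. For a single $G_n$, hypothesis (iii) combined with bounded degree yields $\card{V(G_n)} \le D^{A\, \girth(G_n)}$; the number of pairs of reduced paths of length $\ell$ in $G_n$ is then at most $D^{2A\,\girth(G_n)} \cdot D^{2\ell}$, while the probability that a uniformly random labelling (subject to local reducedness) assigns the same word to two given paths of length $\ell$ is of order $\card{S}^{-\ell}$. A union bound over $\ell \ge \tfrac{1}{6}\girth(G_n)$ shows that, for $\card{S}$ large enough depending on $A$ and $D$, the $Gr'(1/6)$-condition holds inside each $G_n$ with positive probability. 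For pieces shared between distinct $G_m$ and $G_n$ with $m < n$, hypothesis (ii) yields $\girth(G_n) \ge \girth(G_m) + 6(n-m)$; since any piece in $G_m$ has length at most $\diam(G_m) \le A\,\girth(G_m)$, it becomes negligible compared to $\tfrac{1}{6}\girth(G_n)$ once $n-m$ is large, leaving only finitely many cross-graph constraints to be absorbed into the random construction for each $G_n$.

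Once such a labelling is fixed, setting $\Gamma \mathrel{:=} \grppres{S}{R}$ with $R$ the collection of cycle words yields the required group, and the standard graphical small cancellation theorem certifies that the canonical map from each $G_n$ into $\cay(\Gamma,S)$ is an isometric embedding, completing the proof.
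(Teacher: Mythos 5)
This statement is cited verbatim from Osajda~\cite[Theorem~3.2]{osajda2020small}; the paper does not prove it, though it sketches Osajda's argument when discussing computability (\cref{thm:infinite-tww-decidable}). Your high-level strategy---a graphical small cancellation labelling over a finite alphabet chosen at random, followed by the Greendlinger-type argument to certify isometric embedding---is indeed the approach Osajda takes. The difference, and the gap, is in the probabilistic core: you propose a plain union bound, whereas Osajda's proof is explicitly built on the Lov\'asz Local Lemma, and this is not a matter of convenience. When randomly labelling $G_n$, you must avoid matching long paths in the previously fixed $G_0,\dots,G_{n-1}$; a union bound over the paths of $G_n$ paying $\card{V(G_n)} \cdot D^\ell$ per length~$\ell$ is swamped by $\card{V(G_n)} \approx D^{A\,\girth(G_n)}$, and the exponent in $\girth(G_n)$ overwhelms the $\card{S}^{-\ell}$ saving for $\ell$ on the scale of $\girth(G_m)$. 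LLL rescues this precisely because each ``bad'' event at a vertex of $G_n$ depends only on labels in a bounded neighbourhood, not on the whole graph.

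Your handling of cross-graph pieces also has a sign error in the logic. For a labelled path $p$ appearing in both $G_m$ and $G_n$ ($m<n$), the graphical $C'(\lambda)$ condition demands $\card{p} < \lambda\,\girth(G_m)$ \emph{and} $\card{p} < \lambda\,\girth(G_n)$; since $\girth(G_m) \le \girth(G_n)$ the binding constraint is the \emph{smaller} girth, and it does not relax as $n-m$ grows. Indeed, a piece of length comparable to $\diam(G_m) \le A\,\girth(G_m)$ already violates the condition if $A > \lambda$, which is exactly the regime the hypothesis allows, so these constraints are never ``negligible''; they must be enforced for every $m < n$ in the inductive labelling step. The hypothesis $\girth(G_n) \ge \girth(G_{n-1}) + 6$ enters instead in the LLL dependency bookkeeping, not in making cross-graph constraints disappear. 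In short: the architecture of your proof is right and matches Osajda's, but the union bound step fails where the Local Lemma is genuinely required, and the cross-graph analysis needs to track the smaller girth.
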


To obtain a finitely generated group of infinite twin-width,
it suffices to construct a sequence of graphs satisfying these hypotheses,
and which furthermore has unbounded twin-width.
Precisely, the sequence we construct satisfies the following\footnotemark{}:
\begin{enumerate}
  \item Each~$G_n$ has maximum degree at most~6,
  \item $\diam(G_n) \le 3 \log \card{V(G_n)}$, and
  \item $\girth(G_n) > \frac{1}{4} \log \card{V(G_n)}$.
\end{enumerate}
\footnotetext{Logarithms in this section are in base~2.}

For~$n \in \Nn$, we write~$[n] = \{1,\dots,n\}$.
For~$n$ even, let~$\Cc_1(n)$ be the class of edge-labeled graphs on vertex set~$[n]$
which are formed by the union of three perfect matchings with labels~1, 2, and~3
(these graphs may have multiple edges).
This class is not small.
\begin{lemma}
  \label{lem:cubic-not-small}
  For any constant~$c$ and any sufficiently large even~$n$,
  \[ \card{\Cc_1(n)} > n! \cdot c^n. \]
\end{lemma}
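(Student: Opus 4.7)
The plan is to count $\card{\Cc_1(n)}$ directly and then extract the asymptotics via Stirling's formula. First I would observe that a graph in $\Cc_1(n)$ is determined exactly by an ordered triple $(M_1, M_2, M_3)$ of perfect matchings on $[n]$: the edge labels distinguish the three matchings, so distinct ordered triples give distinct edge-labeled graphs even when two matchings share an edge (this produces a multi-edge carrying two different labels, which is allowed by the definition of $\Cc_1(n)$). Hence $\card{\Cc_1(n)} = ((n-1)!!)^3$, where $(n-1)!! = \frac{n!}{2^{n/2}(n/2)!}$ is the standard count of perfect matchings on $n$ labeled vertices (with $n$ even).

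The second step is a routine Stirling estimate. Applying $m! \sim \sqrt{2\pi m}\,(m/e)^m$ to both $n!$ and $(n/2)!$ gives
\[ (n-1)!! \sim \sqrt{2}\,(n/e)^{n/2}, \]
and consequently
\[ \frac{\card{\Cc_1(n)}}{n!} \sim \frac{2^{3/2}}{\sqrt{2\pi n}} \left(\frac{n}{e}\right)^{n/2}. \]
Since $(n/e)^{n/2} = (\sqrt{n/e})^n$ and $\sqrt{n/e} \to \infty$, this ratio eventually exceeds $c^n$ for any fixed constant $c$, which proves the lemma.

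There is no real obstacle here; the content of the lemma is just that labeled cubic multigraphs (with an edge-colouring by $\{1,2,3\}$) are factorially more numerous than permutations of $[n]$. The only point worth double-checking is the bijection in the first step, which is immediate because the edge labels prevent any collapse between distinct ordered triples of matchings, and all else is standard Stirling asymptotics.
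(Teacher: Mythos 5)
Your proof is correct and follows essentially the same approach as the paper: both count ordered triples of perfect matchings and compare against $n!\cdot c^n$. The paper takes a small shortcut by counting only the $(n/2)!^3$ triples of bipartite matchings between $\{1,\dots,n/2\}$ and $\{n/2+1,\dots,n\}$ (a lower bound suffices), whereas you give the exact count $((n-1)!!)^3$ and run the Stirling estimate through explicitly; the substance is the same.
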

\begin{proof}
  The class~$\Cc_1(n)$ contains all possible choices of 3 perfect matchings
  from~$\{1,\dots,n/2\}$ to~$\{n/2+1,\dots,n\}$.
  There are~$(n/2)! ^3$ such choices, i.e.\ at least~$n!^{3/2}$ up to some exponential factors,
  which is asymptotically larger than~$n! \cdot c^n$.
\end{proof}

Fix~$n \in \Nn$ even and define $g \eqdef \log(n) / 4$.
Define~$\Cc_2(n)$ the class of graphs on vertex set~$[n]$ with degree at most~6,
diameter at most~$3\log n$ and girth at least~$g$.
Let us show that at least half of~$\Cc_1(n)$ can be obtained from graphs of~$\Cc_2(n)$
by editing (adding or removing) at most~$n^{7/8}$ edges and adding labels~$1,2,3$ to edges.

\begin{lemma}
  \label{lem:expectation-cycles}
  If~$G$ is a graph chosen uniformly at random in~$\Cc_1(n)$,
  then the expected number of cycles in~$G$ of length at most~$g$ is at most~$2 \cdot 6^g$.
\end{lemma}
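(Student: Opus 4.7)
The plan is to bound the expectation by linearity, summing the probability that each potential short cycle appears, over a combinatorial enumeration of such cycles. For $3 \leq k \leq g$, a potential $k$-cycle in a random graph from $\Cc_1(n)$ is determined by a cyclic sequence of $k$ distinct vertices of $[n]$ together with a label sequence in $\{1,2,3\}^k$; note that consecutive labels in this sequence must differ, since each vertex is incident to exactly one edge of each label, so any length-$2$ repetition would force a repeated vertex. Counting ordered tuples with a $2k$ correction for rotations and reflection, I would bound the number of potential $k$-cycles by $\frac{n^k \cdot 3^k}{2k}$, which is crude but sufficient.

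Next, for a fixed potential $k$-cycle using $m_\ell$ edges of label $\ell$ (so $m_1 + m_2 + m_3 = k$), I would estimate the probability that it appears in $G$. Since the matchings $M_1, M_2, M_3$ are sampled independently and uniformly, this probability factors as a product over the three labels. The probability that $m$ specified disjoint pairs all lie in a uniformly random perfect matching on $[n]$ is the ratio of restricted matchings to all matchings, namely
\[ \prod_{i=0}^{m-1} \frac{1}{n - 2i - 1}. \]
Because $k \leq g = \frac{1}{4} \log n$, we have $2m - 1 \ll n$, so for $n$ large enough each denominator factor exceeds $n/2$, yielding an upper bound of $(2/n)^m$ for each label and $(2/n)^k$ overall.

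Combining the two estimates, the expected number of $k$-cycles is at most $\frac{n^k \cdot 3^k}{2k} \cdot (2/n)^k = \frac{6^k}{2k}$, and summing the geometric series over $k=3,\dots,g$ gives a bound below $2 \cdot 6^g$, as desired. There is no real obstacle here: the argument is standard first-moment bookkeeping, and the key quantitative input is that $g$ is small enough relative to $n$ to make each matching-probability factor essentially $1/n$. One mild subtlety is the treatment of multi-edges (parallel edges of different labels forming a length-$2$ cycle), which I would handle separately and check to contribute only an $O(1)$ term that is absorbed into the bound $2 \cdot 6^g$.
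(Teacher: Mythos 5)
Your proposal is correct and follows essentially the same approach as the paper: linearity of expectation over potential short cycles with labels in $\{1,2,3\}$ (using the observation that consecutive labels must differ), the same conditional-probability bound of $2/n$ per edge in a random perfect matching, and the same geometric summation to reach $2\cdot 6^g$. The only cosmetic differences are that you tighten the cycle count by a $1/(2k)$ rotation--reflection factor (the paper just uses $3^i n^i$) and you separate the length-2 multi-edge case, whereas the paper absorbs it by starting the sum at $i=2$.
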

\begin{proof}
  Let~$C$ be a potential cycle of length~$\ell \le g$ properly edge-colored with~$1,2,3$;
  that is, a sequence~$v_1,\dots,v_\ell$ of vertices in~$[n]$ where each
  potential edge~$v_iv_{i+1}$ (with indices modulo~$\ell$) is given a label in~$1,2,3$,
  and such that consecutive edges have distinct labels.
  Let us bound the probability that~$C$ appears as a cycle with the assigned colors in~$G$.

  When choosing a random perfect matching~$M$ on~$n$ vertices ($n$ even),
  the probability that a given pair of vertices belongs to~$M$
  is $\frac{n}{2} / \binom{n}{2} = \frac{1}{n-1}$.
  If these probabilities were independent, $C$ would appear in~$G$ with probability exactly~$(n-1)^{-\ell}$.
  But the fact that each color class of~$G$ forms a matching introduces a small dependency:
  Conditioned by the fact that~$t$ edges of a random perfect matching~$M$ are already known,
  the probability that a~given pair of not yet matched vertices appears in~$M$ is~$1/(n-2t-1)$.
  Now if these already known edges are part of~$C$, then~$t \le \ell \le g \ll n$,
  hence this probability is upper bounded by~$2/n$.
  Thus the probability that~$C$ appears in~$G$ is at most~$(2/n)^{\ell}$.
  Therefore the expected total number of cycles of length at most~$g$ in~$G$ is at most
  \[ \sum_{i=2}^g 3^i \cdot n^i \cdot (2/n)^{i} \le 2 \cdot 6^g. \]
\end{proof}

\begin{lemma}
  \label{lem:editing}
  For any even~$n$, for at least half of the graphs~$G=(V,E)$ in~$\Cc_1(n)$,
  there exists~$G'=(V,E')$ in~$\Cc_2(n)$ such that $|E \symdiff E'|=O(n^{7/8})$,
  where~$\symdiff$ denotes the symmetric difference.
\end{lemma}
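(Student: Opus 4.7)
The plan is to pick $G$ uniformly at random in $\Cc_1(n)$, construct $G' = (V, E')$ by deleting from each cycle of length at most $g$ in $G$ one arbitrarily chosen edge, and argue that for at least half of $G$ the resulting $G'$ lies in $\Cc_2(n)$ with $|E \symdiff E'| = O(n^{7/8})$.

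First, applying Markov's inequality to \cref{lem:expectation-cycles} with threshold $12 \cdot 6^g$ shows that at least a fraction $5/6$ of $\Cc_1(n)$ contains at most $12 \cdot 6^g = O(n^{(\log_2 6)/4}) = O(n^{0.647})$ cycles of length at most $g$; on this event one immediately gets $|E \symdiff E'| = O(n^{7/8})$, and by construction $G'$ has maximum degree at most $3 \le 6$ and girth strictly greater than $g$. Second, a uniform $G \in \Cc_1(n)$ is the union of three independent uniform perfect matchings on $[n]$, a standard random cubic multigraph model whose diameter is $(1+o(1))\log_2 n$ with probability $1-o(1)$ (by an elementary first-moment BFS expansion argument exploiting the independence of the matchings); in particular, for $n$ large enough, at least a fraction $5/6$ of $\Cc_1(n)$ satisfies $\diam(G) \le 2 \log n$.

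The main obstacle is to ensure $\diam(G') \le 3 \log n$ once edges have been removed. The natural attempt is: for any $u, v \in V$, take a $G$-geodesic $P$ of length at most $2 \log n$, and replace each deleted edge $e = xy$ appearing on $P$ by the surviving arc of length at most $g - 1$ of the short cycle that contained $e$, giving a $u$--$v$ walk in $G'$. The naive bound on the number of replacements is $2 \log n$, yielding a walk of length $O(\log^2 n)$, which exceeds the $3 \log n$ budget. The fix uses a third concentration step: a second-moment computation in the spirit of \cref{lem:expectation-cycles} bounds the expected number of (short cycle, $G$-geodesic, shared edge) triples, and a Markov argument then shows that with probability at least $5/6$ no $G$-geodesic shares an edge with more than $O(1)$ short cycles; alternatively one argues directly that the set of vertices involved in any short cycle has size $O(g \cdot 6^g) = o(n)$, and the expander structure of $G$ survives removal of $o(n)$ edges.

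Combining the three events, each of probability at least $5/6$, leaves a subset of $\Cc_1(n)$ of density at least $1/2$ on which the rerouted walk has length at most $2 \log n + O(g) \le 3 \log n$ for $n$ large enough. For such $G$, the modified graph $G'$ lies in $\Cc_2(n)$ and satisfies $|E \symdiff E'| = O(n^{7/8})$, as required.
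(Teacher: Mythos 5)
The paper's proof and your proposal diverge on a key structural point: the paper \emph{adds} edges to fix the diameter, whereas you only \emph{delete} edges and then try to argue that the diameter survives. After bounding the number of short cycles via \cref{lem:expectation-cycles} and Markov (as you do), the paper deletes one edge per short cycle to get girth~$> g$, and then independently adds a balanced ternary tree on an inclusion-wise maximal subset~$X$ of vertices pairwise at distance $\ge \log n$. A careful counting argument (splitting~$X$ according to whether points are near a deleted edge or not, and using that balls of radius~$g/2$ far from deletions are $3$-regular trees of size $\ge 2^{g/2} = n^{1/8}$) bounds $|X| = O(n^{7/8})$, so the added tree costs only $O(n^{7/8})$ edges. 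Maximality of~$X$ then gives diameter at most $3\log n$ outright, and the tree's edges do not create short cycles because endpoints in~$X$ are far apart. No property of random cubic graphs beyond \cref{lem:expectation-cycles} is used.

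Your approach has a genuine gap at the diameter step. Deleting edges alone means~$G'$ is a subgraph of~$G$, and you must show deletion of $O(n^{0.647})$ edges keeps the diameter at $O(\log n)$. You yourself note that the naive ``reroute each deleted edge around its cycle'' argument yields $O(\log^2 n)$. The fixes you gesture at are not sound as stated: rerouting around one deleted edge may hit further deleted edges (short cycles can cluster, and the ``surviving arc'' is not guaranteed to be deletion-free), the proposed second-moment bound on (short cycle, geodesic, shared edge) triples is not a well-posed quantity because the set of geodesics in a random graph is not easily controlled, and ``expander structure survives removal of $o(n)$ edges'' is false in general (a single bridge deletion disconnects a graph) and requires a real edge-expansion argument that is not sketched. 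You also implicitly rely on $G$ being connected, which a union of three random matchings is not with certainty. In short, the paper sidesteps all of this by \emph{constructing} a short-diameter certificate (the added tree) rather than \emph{inheriting} one from the random graph, and this is why the lemma statement is about symmetric difference rather than edge deletion: the additions are essential.
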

\begin{proof}
  By \cref{lem:expectation-cycles} and Markov's inequality,
  at least half of the graphs~$G_1$ in~$\Cc_1(n)$ have at most~$4 \cdot 6^g$ cycles of length at most~$g$.
  Let~$G_1$ be such a~graph, and choose~$F \subset E(G_1)$ of size at most~$4 \cdot 6^g$
  intersecting every cycle of length at most~$g$ in~$G_1$ (simply choose an edge in each such cycle).
  Let~$G_2$ be the subgraph of~$G_1$ obtained by deleting the edges of~$F$.
  By construction, $G_2$ has girth more than~$g$.
  We will now add edges to~$G_2$ to ensure its diameter is at most~$3 \log n$
  without creating new short cycles.

  Let~$Z$ be the set of endpoints of edges in~$F$. Thus~$\card{Z} \le 8 \cdot 6^g$.
  Let~$N$ be the union of closed balls of radius~$g/2$ centered on vertices of~$Z$.
  Since~$G_2$ has maximum degree 3, for any fixed vertex~$x$,
  the ball of radius~$d$ around~$x$ in~$G_2$ contains at most~$3 \cdot 2^d$ vertices.
  It follows that
  \begin{equation}
    \card{N} \le 8 \cdot 6^g \cdot 3 \cdot 2^{g/2} = 24 \cdot (6 \sqrt{2})^g \le 24 \cdot n^{\log(6 \sqrt{2}) / 4}.
  \end{equation}
  Observe that $\log(6 \sqrt{2}) / 4 \le 7/8$, hence
  \begin{equation}
    \card{N} \le 24 \cdot n^{7/8}.
    \label{eq:size-N}
  \end{equation}
  Choose~$X$ an inclusion-wise maximal subset of~$V(G_2)$ such that
  vertices in~$X$ pairwise have distance at least~$\log n$ in~$G_2$.
  Define~$X_1 = X \cap N$, and~$X_2 = X \setminus N$.

  Let~$v \in X_2$, and let~$B_v$ be the closed ball of radius~$g/2$ in~$G_2$.
  Since~$v$ is at distance more than~$g/2$ from~$Z$,
  no edge of~$G_1$ was removed inside this ball, hence all vertices of~$B_v$ have degree~3.
  Furthermore, because~$G_2$ has girth more than~$g$, $G_2$ restricted to~$B_v$ is a tree,
  except possibly for edges between vertices at distance $g/2$ from $v$ which we may ignore.
  This implies
  \begin{equation}
    \card{B_v} \ge 2^{g/2} = n^{1/8}.
    \label{eq:size-balls-X2}
  \end{equation}
  Finally, any distinct~$v,v'\in X_2$ are at distance at least $\log n$, which is more than $g$.
  Thus the balls $B_v$ and $B_{v'}$ are disjoint.
  It follows from~\cref{eq:size-balls-X2} that
  \begin{equation}
    \card{X_2} \le \frac{\card{V(G_2)}}{n^{1/8}} = n^{7/8}.
    \label{eq:size-X2}
  \end{equation}
  Using that $X = X_1 \cup X_2$, $X_1 \subset N$, and \cref{eq:size-N,eq:size-X2}, we obtain
  \begin{equation}
    \card{X} = O(n^{7/8}).
  \end{equation}

  Finally, let~$G_3$ be a graph obtained by adding to~$G_2$ a balanced ternary tree~$T$ with vertex set~$X$.
  Thus, vertices in~$X$ are pairwise at distance at most~$\log n$ in~$G_3$.
  Furthermore, by maximality of~$X$, all vertices of~$G_2$ are at distance at most~$\log n$ of~$X$.
  It follows that the diameter of~$G_3$ is at most~$3 \log n$.
  Finally, $G_3$ still has girth at least~$g$, because~$T$ only connects vertices of~$X$, which are far apart in~$G_2$.
  Clearly the degree of vertices in~$G_3$ does not exceed 6.
  Thus~$G_3 \in \Cc_2(n)$.

  To summarize, we transform~$G_1$ into~$G_3$ by
  deleting $\card{F} \le 4 \cdot 6^g$ edges, and then adding $\card{X}$ edges,
  which gives~$O(n^{7/8})$ edge editions in total.
\end{proof}

\begin{lemma}
  \label{lem:cubic-girth-diam-not-small}
  The class $\Cc_2$ is not small. Precisely, for any constant~$c$ and any sufficiently large even~$n$,
  \[ \card{\Cc_2(n)} \ge n! \cdot c^n. \]
\end{lemma}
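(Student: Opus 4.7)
The plan is to double-count: bound $|\Cc_1(n)|$ from above in terms of $|\Cc_2(n)|$ by reversing the editing procedure of \cref{lem:editing}, then invoke the lower bound on $|\Cc_1(n)|$ from \cref{lem:cubic-not-small}.

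First I would fix a constant $C$ so that \cref{lem:editing} guarantees that for at least half of $G = (V,E) \in \Cc_1(n)$, there exists $G' = (V,E') \in \Cc_2(n)$ with $|E \symdiff E'| \le C n^{7/8}$. For each such $G'$, I would count how many $G \in \Cc_1(n)$ are reachable from $G'$ in this way. Reconstructing $G$ from $G'$ amounts to two independent choices: (a) editing the edge set to obtain the underlying unlabeled graph of $G$, and (b) assigning labels in $\{1,2,3\}$ to the edges so that each label class is a perfect matching.

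For step~(a), the vertex set $[n]$ has at most $\binom{n}{2} \le n^2$ potential edges, so the number of possible symmetric differences of size at most $C n^{7/8}$ is bounded by
\[ \sum_{k=0}^{Cn^{7/8}} \binom{n^2}{k} \le n^{O(n^{7/8})} = 2^{O(n^{7/8} \log n)} = 2^{o(n)}. \]
For step~(b), every $G \in \Cc_1(n)$ has at most $3n/2$ edges with at most~$3$ choices of label each, yielding at most $3^{3n/2} = 2^{O(n)}$ labelings. Combining, each $G' \in \Cc_2(n)$ is the preimage of at most $2^{O(n)}$ graphs of $\Cc_1(n)$.

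Summing over $\Cc_2(n)$ and invoking \cref{lem:editing},
\[ |\Cc_1(n)|/2 \le |\Cc_2(n)| \cdot 2^{O(n)}, \]
so $|\Cc_2(n)| \ge |\Cc_1(n)| \cdot 2^{-O(n)}$. Given any target constant $c$, I would choose a larger constant $c'$ such that $c'^n / 2^{O(n)} \ge c^n$ for sufficiently large $n$, and apply \cref{lem:cubic-not-small} with $c'$ in place of $c$: for large even~$n$, $|\Cc_1(n)| > n! \cdot c'^n$, and the desired bound $|\Cc_2(n)| \ge n! \cdot c^n$ follows. No serious obstacle arises; the content is that both the edit count and the label count contribute only $2^{O(n)}$ factors, which is immediate from the $O(n^{7/8})$ bound on edits and the fact that a three-matching union has linearly many edges.
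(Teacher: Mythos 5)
Your proposal is correct and takes essentially the same route as the paper: bound the number of graphs in $\Cc_1(n)$ reachable from a single graph in $\Cc_2(n)$ by $2^{O(n)}$ (edit count times label count), combine with \cref{lem:editing}, and invoke \cref{lem:cubic-not-small}. The only cosmetic difference is that the paper phrases it as a proof by contradiction whereas you argue directly.
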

\begin{proof}
  Fix~$c$, consider~$n$ even sufficiently large,
  and suppose for a contradiction that $\card{\Cc_2(n)} \le n! \cdot c^n$.
  For any graph~$G$ on~$n$ vertices,
  the number of graphs which can be obtained from~$G$ by editing up to~$n^{7/8}$ edges
  is at most
  \begin{equation}
    \binom{n^2}{n^{7/8}} \le n^{2n^{7/8}} \le 2^n,~\text{asymptotically}.
  \end{equation}
  Hence the total number of cubic graphs and edges labeled~1, 2, 3
  obtained by~$n^{7/8}$ editions from any graph in~$\Cc_2(n)$ is asymptotically at most
  \begin{equation}
    2^n \cdot 3^{3n/2} \cdot n! \cdot c^n \le n! \cdot c_2^n
  \end{equation}
  for some constant~$c_2$.
  By \cref{lem:editing}, the cardinality of~$\Cc_1(n)$ is at most double the previous value,
  a contradiction of \cref{lem:cubic-not-small}.
\end{proof}

\infinitetww*

\begin{proof}
  By \cref{lem:cubic-girth-diam-not-small,thm:tww-small},
  for any~$k \in \Nn$ and for any sufficiently large~$n_k$, $\Cc_2(n_k)$ contains graphs of twin-width more than $k$.
  Thus, one can construct a sequence~$(G_k)_{k \in \Nn}$ of graphs such that~$\tww(G_k) > k$ and~$G_k \in \Cc_2(n_k)$,
  for any sequence~$(n_k)_{k \in \Nn}$ of even integers growing sufficiently fast.
  Since $\girth(G_k) \ge \log(n_k)/ 4$, an appropriate choice of $n_k$
  ensures that $\girth(G_k) \ge \girth(G_{k-1}) + 6$.
  Then~$(G_k)_{k \in \Nn}$ satisfies the hypotheses of \cref{thm:graph-sequence-embedding},
  hence there exists a group~$\Gamma$ finitely generated by~$S$
  such that every~$G_k$ isometrically embeds in~$\cay(\Gamma,S)$.
  This group has infinite twin-width.
\end{proof}

\subsection{Computability considerations}
In this section, we argue that the proof of \cref{thm:infinite-tww} is effective,
and in particular that it can be turned into an algorithm to decide the word problem in the resulting group.

\begin{theorem}
  \label{thm:infinite-tww-decidable}
  There exists a group with infinite twin-width given by a presentation~$\grppres{S}{R}$,
  with a~finite set~$S$ of generators and a~recursive set $R$ of relators.
  Furthermore, this group has a decidable word problem.
\end{theorem}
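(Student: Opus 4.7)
The plan is to check that the construction in the proof of \cref{thm:infinite-tww} is entirely effective, producing a recursive presentation $\grppres{S}{R}$, and then to appeal to classical decidability results for graphical small cancellation groups to conclude that the word problem is decidable.

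First, I would verify that the sequence $(G_k)_{k \in \Nn}$ built in the proof of \cref{thm:infinite-tww} can itself be computed uniformly in $k$. The counting arguments in \cref{lem:cubic-not-small,lem:cubic-girth-diam-not-small} together with \cref{thm:tww-small} are quantitative: for each $k$ one can compute an explicit bound $n_k$ for which $\Cc_2(n_k)$ must contain a graph of twin-width exceeding $k$ and with girth at least $\girth(G_{k-1}) + 6$. Since $\Cc_2(n_k)$ is a finite, explicitly enumerable set of finite labeled graphs, and since twin-width, diameter, and girth are all computable on finite graphs, a brute-force search produces $G_k$ effectively from $G_{k-1}$.

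Next, I would examine Osajda's construction from \cref{thm:graph-sequence-embedding} and check that, applied to a computable sequence of graphs, it yields a recursive set of relators. In graphical small cancellation theory, the generators in $S$ label the edges of each $G_n$, and the relators are the words read along cycles of the labeled graphs. Inspection of \cite{osajda2020small} should show that the edge-labeling procedure is itself algorithmic, so that the labeled sequence is computable from the bare sequence $(G_k)$. Testing whether a given word $w$ lies in $R$ then reduces to examining cycles of length at most $|w|$ in those finitely many graphs $G_k$ with $\girth(G_k) \le |w|$, which is decidable; hence $R$ is recursive.

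For the word problem, Osajda's construction satisfies a graphical $\mathrm{Gr}'(1/6)$-small cancellation condition, and it is classical that for such presentations with recursive relators Dehn's algorithm decides the word problem: every non-trivial word in the kernel contains more than half of a cyclic conjugate of some relator, and such a relator can be found by enumerating the recursive set $R$ up to length $2|w|$. The main obstacle is the careful unpacking of \cite{osajda2020small} to confirm that the labeling is algorithmic and that the small cancellation condition obtained is strong enough to run Dehn's algorithm; both points should hold, but require a detailed reading of Osajda's proof rather than a black-box application of the statement we used for \cref{thm:infinite-tww}.
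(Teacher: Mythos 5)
Your proposal follows essentially the same three-step structure as the paper's proof: (1) compute the graph sequence $(G_k)$ effectively, (2) verify that Osajda's labelling is algorithmic so that the set of relators $R$ is recursive, and (3) decide the word problem with Dehn's algorithm via graphical small cancellation. Steps (1) and (3) match the paper closely; the paper additionally offers a more efficient randomised variant of (1), but also notes that brute-force enumeration suffices, which is what you describe.

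The one place where you wave your hands is exactly where the paper supplies the key observation. You write that ``inspection of Osajda should show that the edge-labeling procedure is itself algorithmic,'' but the Lov\'asz Local Lemma only asserts \emph{existence} of a good labelling, which is not by itself an algorithm, and an LLL argument over an infinite family of constraints could in principle require global information about the whole sequence. The paper's resolution is that Osajda's proof is \emph{incremental}: the labelling of $G_k$ is chosen knowing only $G_0,\dots,G_k$, and one can always extend any good prefix of labellings to $G_{k+1}$. This is what makes the naive enumerate-and-test algorithm correct, since there is no risk of choosing a labelling of $G_k$ that later dead-ends. Without naming this incrementality, your step (2) remains a conjecture about Osajda's proof rather than an argument. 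This is the only substantive gap; the rest of the proposal is sound and matches the paper's route.
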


Using Higman embedding theorem~\cite{higman1961subgroups}, we obtain the following.
\begin{corollary}
  There is a finitely presented group with infinite twin-width.
  \label{them:infinite-tww-finite-pres}
\end{corollary}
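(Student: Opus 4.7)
The plan is to combine Theorem~\ref{thm:infinite-tww-decidable} with the classical Higman embedding theorem~\cite{higman1961subgroups}, which asserts that a finitely generated group embeds into some finitely presented group if and only if it admits a recursively enumerable presentation. Theorem~\ref{thm:infinite-tww-decidable} provides a group $\Gamma = \grppres{S}{R}$ which is finitely generated (since $S$ is finite) and has a recursive set of relators $R$; in particular $R$ is recursively enumerable, so Higman's theorem applies and yields a finitely presented group $\tilde\Gamma$ containing $\Gamma$ as a subgroup.

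To conclude, I would invoke Lemma~\ref{lem:subgroup}: finiteness of twin-width passes to subgroups. Since $\Gamma \le \tilde\Gamma$ and $\Gamma$ has infinite twin-width by Theorem~\ref{thm:infinite-tww-decidable}, $\tilde\Gamma$ must also have infinite twin-width, giving the desired finitely presented example. The argument has no genuine obstacle once the supporting results are in place: the substantive content lives entirely in Theorem~\ref{thm:infinite-tww-decidable} (where one must verify that the presentation produced in Section~\ref{sec:cubic-embedding} can indeed be made recursive) and in the deep classical result of Higman, both of which we invoke as black boxes. The only subtlety worth stating explicitly is that ``recursive'' is strictly stronger than ``recursively enumerable'', so that Higman's hypothesis is immediately satisfied.
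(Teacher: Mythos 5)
Your proposal is correct and follows the same route the paper intends: apply Higman's embedding theorem to the recursively presented group from Theorem~\ref{thm:infinite-tww-decidable}, then use Lemma~\ref{lem:subgroup} (monotonicity of twin-width under subgroups, in its contrapositive form) to transfer infinite twin-width to the finitely presented supergroup. The paper even makes the same remark you do, namely that the resulting finitely presented group is merely a supergroup of the decidable-word-problem group and need not itself have a decidable word problem.
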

Note that this finitely presented group is a supergroup of the one constructed in \cref{thm:infinite-tww-decidable},
and might not have a decidable word problem.

To prove \cref{thm:infinite-tww-decidable},
we split the proof of \cref{thm:infinite-tww} in three steps,
and prove that each of them is effective.
\begin{enumerate}
  \item Constructing the sequence of graphs~$(G_k)_{k \in \Nn}$,
    with appropriate conditions on degree, girth, diameter, and twin-width (\cref{sec:cubic-embedding}).
  \item Adding labels to this sequence satisfying the small cancellation conditions.
    This is the core of Osajda's proof, see~\cite[Section~2]{osajda2020small}.
  \item Obtaining a group from this small cancellation graph labelling.
    This construction, attributed to Gromov~\cite{gromov2003random},
    is based on the well-known techniques of small cancellation theory
    (see e.g.~\cite[Chapter~V]{lyndon1977group}).
\end{enumerate}

\subsubsection{Computing the sequence of graphs}
\begin{lemma}
  \label{thm:tww-girth-diam-effective}
  There exists a computable sequence of graphs~$(G_k)_{k \in \Nn}$,
  satisfying the hypotheses of \cref{thm:graph-sequence-embedding}, and with unbounded twin-width.
\end{lemma}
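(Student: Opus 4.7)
The plan is to observe that every step of the proof of \cref{thm:infinite-tww} is already algorithmic, with the single exception of choosing a~graph of large twin-width in~$\Cc_2(n)$, which was obtained through a~counting argument. Since this existence step can be replaced by a~brute-force search, and all other verifications are decidable on finite graphs, the whole construction becomes effective.

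I~proceed by induction on~$k$, assuming~$G_0,\dots,G_{k-1}$ have been computed. For each~$n \in \Nn$ the set of graphs on vertex set~$[n]$ of maximum degree~$\le 6$ is finite and computable, and the properties $\girth(G) \ge \girth(G_{k-1})+6$ and $\diam(G) \le 3\log n$ are decidable, hence one can enumerate the (finite) subclass~$\Cc_k(n)$ of graphs satisfying all these constraints. For each~$G \in \Cc_k(n)$, testing whether $\stww(G) > k$ is also decidable: partition sequences of~$V(G)$ are finite combinatorial objects, so one iterates over all of them and computes the minimum width. Iterating~$n$ from the smallest admissible value upwards, one outputs the first graph encountered in~$\Cc_k(n)$ with $\stww > k$.

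Termination follows from the non-effective existence argument: by \cref{lem:cubic-girth-diam-not-small} and \cref{thm:tww-small}, for every sufficiently large even~$n$, $\Cc_2(n)$ contains graphs of twin-width~$> k$. As soon as $n \ge 2^{4(\girth(G_{k-1})+6)}$, the girth condition $\girth \ge \log n / 4$ defining~$\Cc_2(n)$ implies $\girth \ge \girth(G_{k-1})+6$, so $\Cc_k(n) \supseteq \Cc_2(n)$ for such~$n$; hence the search terminates. The sequence $(G_k)_{k \in \Nn}$ produced this way is computable and satisfies the hypotheses of \cref{thm:graph-sequence-embedding} with constants~$D=6$ and~$A=12$.

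There is no genuine obstacle: the seemingly delicate point is checking that the existence proof survives the extra girth constraint inherited from~$G_{k-1}$, but this is automatic once~$n$ is large enough because $\log n / 4 \to \infty$. The rest is bookkeeping: a~naive exhaustive enumeration of partition sequences is catastrophically slow, but the statement asks only for computability of the sequence, not efficiency, so any decidable twin-width test suffices.
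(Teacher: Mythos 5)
Your overall strategy — replace the counting argument by a brute-force search, observe that every predicate involved is decidable on finite graphs, and appeal to the non-effective existence proof only for termination — is exactly the ``naive algorithm'' the paper describes, so you are on the same route as the authors (the paper also sketches a faster randomised variant, but it is not needed for the statement).

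There is, however, one genuine gap. You define $\Cc_k(n)$ by the constraints ``degree $\le 6$, $\girth \ge \girth(G_{k-1})+6$, $\diam \le 3\log n$'', and explicitly note that this makes $\Cc_k(n) \supseteq \Cc_2(n)$ once $n$ is large. But that inclusion is precisely the problem: your search might output a graph in $\Cc_k(n) \setminus \Cc_2(n)$ whose girth is only a little above $\girth(G_{k-1})+6$ while its diameter is close to $3\log n$. For such a graph the ratio $\diam/\girth$ grows with $n$, so the claimed uniform constant $A=12$ in \cref{thm:graph-sequence-embedding} is not justified, and indeed nothing in your argument bounds this ratio for the produced sequence. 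The fix is easy and restores agreement with the paper: keep the $n$-dependent lower bound $\girth \ge (\log n)/4$ inherited from $\Cc_2(n)$ as part of the constraints defining $\Cc_k(n)$ (so that $\Cc_k(n) = \Cc_2(n) \cap \{\girth \ge \girth(G_{k-1})+6\}$). Every graph returned then satisfies $\diam/\girth \le 12$, and your termination argument is unchanged since $\Cc_k(n) = \Cc_2(n)$ for $n \ge 2^{4(\girth(G_{k-1})+6)}$.
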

\begin{proof}
  By \cref{lem:cubic-girth-diam-not-small,thm:tww-small},
  one can find such a sequence by choosing a~$G_k \in \Cc_2(n_k)$ appropriately, for some $n_k$ function of~$k$.
  It is easy to obtain an explicit formula for~$n_k$ from
  the proofs of \cref{lem:expectation-cycles,lem:editing,lem:cubic-girth-diam-not-small}.
  This yields a naive algorithm: Enumerate all graphs of size~$n_k$
  until finding one in~$\Cc_2(n_k)$ with sufficiently large twin-width.

  However, the proofs give a natural and much more efficient randomized algorithm.
  One chooses a graph uniformly at random in~$\Cc_1(n_k)$.
  Edge editions are performed according to \cref{lem:editing},
  and the process is aborted should it require more than~$4 \cdot 6^g$ edge deletions---%
  which by \cref{lem:expectation-cycles} happens with probability less than~$1/2$.
  In case of success, the resulting graph~$G_k$ is in~$\Cc_2(n_k)$.
  Although the distribution of~$G_k$ over~$\Cc_2(n_k)$ is not uniform,
  the proof of \cref{lem:cubic-girth-diam-not-small} can be adapted to show that
  for any constant~$c$ and any subset $X \subset \Cc_2(n_k)$ of size $\card{X} \le n! \cdot c^n$,
  the probability that~$G_k$ is in~$X$ tends to~0 as $n_k$ grows.
  Applying this to the subset~$X$ of graphs with twin-width at most~$t$ for any fixed~$t$,
  we obtain by \cref{thm:tww-small} that the sequence~$(G_k)_{k \in \Nn}$ has unbounded twin-width with probability~1.
  Thus, this probabilistic algorithm does not even need to compute the twin-width of any graph.

  This algorithm chooses~$G_k$ in expected time polynomial in its size~$n_k$.
  However~$n_k$ needs to be at least exponential in~$k$,
  since we want~$G_k$ to have girth at least linear in~$k$, but at most logarithmic in its size.
\end{proof}

\subsubsection{Computing the small cancellation labelling}
Osajda (\cite[Section~2]{osajda2020small}) equips the sequence~$(G_k)_{k \in \Nn}$
with an edge labelling satisfying the small cancellation condition:
if~$p,p'$ are two distinct paths in~$G_k,G_{k'}$ respectively,
with the same length and edge labelling,
then $|p| < \lambda \cdot \girth(G_k)$, and $|p'| < \lambda \cdot \girth(G_{k'})$,
where~$\lambda > 0$ is some fixed constant (the small cancellation parameter).
Said shortly: Any two paths with the same labelling must be short, relative to the girth.

The proof makes large use of Lovász Local Lemma.
Assuming some appropriate labellings of~$G_0,\dots,G_{k-1}$ have been chosen inductively,
one proves that a random labelling of~$G_k$ is `good' with non-zero probability.
Crucially, this is an incremental process:
to choose the labelling of~$G_k$, one only needs to consider~$G_0,\dots,G_k$.
It will always be possible to later extend this sequence with an appropriate labelling of~$G_{k+1}$.
This readily gives a~naive algorithm, which enumerates all labellings of~$G_k$
and chooses the first `good' one, before continuing with~$G_{k+1}$.
Effective algorithms for Lovász Local Lemma such as~\cite{moser2010constructive}
may be helpful to improve the complexity.

\subsubsection{Deciding the word problem}
The former labelling of~$(G_k)_{k \in \Nn}$ uses labels in~$S \uplus S^{-1}$,
where~$S$ is some finite set, and~$S^{-1}$ denotes the set of formal inverses of~$S$.
The group constructed by \cref{thm:graph-sequence-embedding} is then
\[ \Gamma = \grppres{S}{R} \]
where the set~$R$ of relators contains all words on~$S \uplus S^{-1}$ read along some cycle of some~$G_k$.
This set~$R$ is recursive: given some word~$w$ over~$S \uplus S^{-1}$,
to decide if~$w \in R$, one must test if~$w$ appears as the labelling of some cycle of some~$G_k$.
This can only happen if~$\girth(G_k) \le |w|$.
Since~$\girth(G_k)$ increases at least linearly with~$k$,
it suffices to check a finite number of graphs in the sequence.
This proves the first half of \cref{thm:infinite-tww-decidable}.

Decidability of the word problem uses a classical result of small cancellation theory,
stated below for the case of finite graphical presentations.
\begin{lemma}[Greendlinger's lemma~\cite{ollivier2006small}]
  \label{lem:greendlinger}
  Let~$G$ be a finite graph equipped with an edge labelling on~$S \uplus S^{-1}$
  satisfying the small cancellation condition.
  Let~$R$ be the set of words read along the cycles of~$G$, and~$\Gamma = \grppres{S}{R}$.
  If~$w$ is a word over~$S \uplus S^{-1}$ which represents the identity of~$\Gamma$,
  then some relator~$r \in R$ can be applied to~$w$ to reduce it to~$w'$, with~$\card{w'} < \card{w}$.
\end{lemma}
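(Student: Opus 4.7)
The plan is to follow the standard van Kampen diagram argument of small cancellation theory, adapted to the graphical setting. Since $w$ represents the identity in $\Gamma = \grppres{S}{R}$, there exists a van Kampen diagram $D$ for $w$: a finite, simply connected, planar 2-complex whose boundary reads $w$ and whose 2-cells (faces) are each labelled by a relator in $R$, i.e.\ by the boundary word of some cycle in some $G_k$. Among all such diagrams, I would fix one that is minimal in the number of faces (or, if needed, of minimal total area with a suitable lexicographic refinement).

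First I would set up the geometry of $D$. Each internal edge is shared by two faces, and the common label along a maximal shared arc (a "piece") is a word which appears as the label of a path both in $G_k$ and in $G_{k'}$ for the two relevant graphs; the small cancellation condition on the labelling of $(G_k)_{k \in \Nn}$ then bounds the length of such a piece by $\lambda \cdot \girth(G_k)$, which is strictly less than $\lambda$ times the length of the relator carried by the face. Similarly, a maximal arc where a face meets $\partial D$ is called an external arc.

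The core of the proof is a combinatorial Gauss--Bonnet / Euler-characteristic argument on the planar complex $D$: assigning standard curvatures to vertices, edges, and faces, and using the piece bound $\lambda < 1/6$ (or whatever explicit constant is fixed by Osajda's construction), one shows that not all faces can have their boundary made up of at least (say) three internal pieces, so some face $F$ has an external arc $\alpha$ whose length exceeds half the length of the relator $r$ labelling $F$. This is the Greendlinger face. The label of $\alpha$ is then a subword of (a cyclic conjugate of) $w$ and also a subword of $r$ of length greater than $|r|/2$, so replacing it in $w$ by the label of the complementary arc of $r$ (which is shorter than $\alpha$) produces a word $w'$ with $|w'| < |w|$, still representing the identity. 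This is precisely the reduction claimed by the lemma.

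The main obstacle will be the Gauss--Bonnet step: making it rigorous in the graphical small cancellation setting, where the relators are not words of a fixed presentation but labels of cycles in the graphs $G_k$, requires being careful that pieces are measured against the girth of the correct $G_k$ and that the diagram's faces can be taken to correspond bijectively to immersed cycles, so that the piece estimate applies uniformly. The base inequality required is the standard one, but one must verify that Osajda's labelling condition is strong enough (a $C'(1/6)$-type bound suffices) to drive the combinatorial curvature argument to completion; everything else is then a routine consequence of minimality and planar duality.
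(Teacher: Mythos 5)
The paper does not prove this lemma: it is quoted as a black-box result, citing Ollivier~\cite{ollivier2006small}, and the text around it only discusses how to extend the statement from a single finite graph to the whole sequence~$(G_k)_{k\in\Nn}$. Your sketch correctly reproduces the standard proof one finds in the cited reference and in classical small cancellation theory (Lyndon--Schupp): take a minimal van Kampen diagram for~$w$, observe that the graphical $C'(\lambda)$ condition bounds every interior piece by $\lambda$ times the perimeter of the incident faces, run a combinatorial Gauss--Bonnet/Euler-characteristic count on the planar diagram, and deduce a boundary face whose external arc carries more than half of its relator, which is exactly the Dehn-algorithm reduction step. The caveats you flag --- that pieces must be measured against $\girth(G_k)$ for the correct $k$, and that faces of a reduced diagram must correspond to immersed cycles in the defining graphs --- are precisely the technical points Ollivier's treatment of graphical small cancellation handles, so your plan is sound; there is simply no in-paper proof to compare it against.
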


Although the above is stated for a finite graph, it generalises to a sequence of finite graphs.
Suppose a word~$w$ over~$S \uplus S^{-1}$ represents the identity of~$\Gamma$.
Then there is finite sequence of relators in~$R$ which can be applied to~$w$ to reduce it to the empty word.
These relators are found in finitely many graphs in the sequence~$(G_k)_{k \in \Nn}$, say~$G_0,\dots,G_s$.
Then~$w$ is also equal to~$1$ in the group~$\grppres{S}{R'}$ where the relators~$R'$
are the ones read along some cycle of~$G_0 \cup \dots \cup G_s$.
We conclude by applying \cref{lem:greendlinger} to~$\grppres{S}{R'}$.

Where Greendlinger's lemma applies, the word problem can be decided using Dehn's algorithm:
given a word~$w$ on~$S \cup S^{-1}$, try to rewrite it into a strictly smaller word
by applying some relator of~$R$, and repeat.
If this fails before reaching the empty word, Greendlinger's lemma proves
that the word is not equivalent to the neutral element in~$\Gamma$.
This algorithm works for~$\Gamma$ despite the set of relators~$R$ being infinite.
Indeed, given~$w$ of length~$k$, only relators of length less than~$2k$ need to be considered in Dehn's algorithm.
There are only finitely many such relators, and they can be enumerated.
This concludes the proof of \cref{thm:infinite-tww-decidable}

\section{Remarks on queue number}
\label{sec:queue-number}
This final section draws some remarks on the relation between twin-width and queue number in the context of groups.
Many results of this paper hold when replacing twin-width with queue number.
We point out the proofs which fail for queue number, but without counterexamples:
separating bounded twin-width and bounded queue number on sparse graphs remains an open question.

\subsection{Definition}
Consider a graph~$G$ with a total order~$<$ on~$V(G)$.
Two edges~$xy$ and~$uv$ are said to be \emph{nested} if~$x < u < v < y$, up to symmetries.
The graph~$G$ is said to be a \emph{queue} with regards to~$<$ if it has no nested edges,
and a strict queue if furthermore it is a matching (no two edges share an endpoint).
A (strict) $t$\emph{-queue-layout} of~$G$ is a choice of order~$<$ on~$V(G)$
and a partition $E(G) = \biguplus_{i=1}^t E_i$ of its edges such that for each~$i$,
the graph formed by the edges~$E_i$ is a (strict) queue with regards to~$<$, see \cref{fig:queue-def}.
The (strict) \emph{queue number} of~$G$, denoted~$\qn(G)$ (resp.~$\sqn(G)$)
is the smallest~$t$ such that~$G$ admits a (strict) $t$-queue-layout.
Remark that the strict queue number is at least the maximum degree.
Furthermore, if~$G$ has maximum degree~$\Delta$, given a $t$-queue-layout,
refining the partition of edges with a proper edge coloring gives a strict $(\Delta+1)t$-queue layout
using Vizing's theorem. Thus
\begin{equation}
  \qn(G) \le \sqn(G) \le (\Delta+1)\qn(G),
\end{equation}
i.e.\ the two numbers are equivalent for graphs of bounded degree up to a constant factor.
We will focus on the strict queue number.

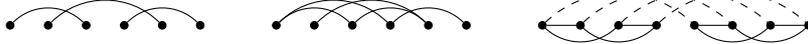
\begin{figure}
  \begin{center}
    \begin{tikzpicture}[scale=0.5,out=45,in=135, relative]
      \tikzstyle{every node}=[normalnode, minimum width=1mm]
      \foreach \i in {1,...,6}{
        \node (v\i) at (\i,0) {};
      }
      \draw (v1) to (v3);
      \draw (v2) to (v5);
      \draw (v4) to (v6);

      \begin{scope}[xshift=7cm]
      \foreach \i in {1,...,6}{
        \node (v\i) at (\i,0) {};
      }
      \draw (v1) to (v3);
      \draw (v3) to (v5);
      \draw (v1) to (v4);
      \draw (v4) to (v6);
      \draw (v2) to (v5);
      \end{scope}

      \begin{scope}[xshift=14cm]
      \foreach \i in {1,...,8}{
        \node (v\i) at (\i,0) {};
      }
      \foreach \i in {1,3,5,7}{
        \draw (v\i) -- +(1,0);
      }
      \foreach \i in {1,2,5,6}{
        \draw (v\i) to[out=-45,in=-135] +(2,0);
      }
      \foreach \i in {1,2,3,4}{
        \draw[dashed] (v\i) to +(4,0);
      }
      \end{scope}
    \end{tikzpicture}
  \end{center}
  \caption{%
    A strict queue; a queue; and a 2-queue-layout of the cube,
    partitioned in solid and dashed edges.
  }
  \label{fig:queue-def}
\end{figure}

\subsection{Matrix characterisation}
Say that an~$X \times Y$ matrix~$M$ is increasing if it is the matrix of an increasing partial map,
i.e.\ whenever there are 1-coefficients at positions~$(x_1,y_1)$ and~$(x_2,y_2)$,
either~$x_1<x_2$ and~$y_1<y_2$, or~$x_2<x_1$ and~$y_2 < y_1$.
In particular, each row and column contains at most a single~`1'.

Call strict queue number of~$M$ the minimum~$k$
such that~$M$ is equal to the superposition of~$k$ increasing matrices.
If~$G$ is a graph and~$<$ an order on~$V(G)$ witnessing the strict queue number of~$G$,
then one verifies
\begin{equation}
  \sqn(G) \le \sqn(\adj{<}{G}) \le 2\sqn(G).
\end{equation}
The first inequality is simple, and the second is obtained by using up to $\sqn(G)$ increasing matrices
to cover the 1-entries above the diagonal, and as many for the 1-entries below the diagonal.
Hence the strict queue number of~$G$ is within a factor~2 of
the strict queue number of its adjacency matrix, assuming an optimal choice of order on~$V(G)$.

\subsection{Queue number of groups}
If one considers the matrix product, with addition interpreted as logical `or' and multiplication as logical `and',
it is clear that a product of increasing matrices is itself increasing.
Furthermore, the adjacency matrix of~$G^{(k)}$ is equal to~$(M + I)^k - I$
where~$M$ is the adjacency matrix of~$G$ and~$I$ is the identity.
Matrix product distributes over matrix superposition, thus assuming that~$\sqn(M) \le t$, we have that~$(M+I)$ is superposition of~$t+1$ increasing matrices,
and after distributing~$(M+I)^k$ is superposition of at most~$(t+1)^k$ increasing matrices.
Thus we obtain a result similar to \cref{lem:power-graph}:
$\sqn(G^{(k)})$ is bounded by a function of~$\sqn(G)$ and~$k$.
With a few other arguments, one can reproduce the proof of \cref{lem:tww-regular-embedding}
to prove that regular embeddings preserve finite strict queue number, which is therefore a group invariant.
All examples of \cref{sec:geometric-tww} can easily be shown to have finite queue number
(finitely generated abelian groups, finite Cartesian products of groups with finite queue number, hyperbolic groups).

The characterisation of groups with finite queue number through matrices
and the generalisation to infinitely generated groups work without any issue.
The uniform queue number of a group~$G$ is defined as
\[ \uqn(G) \eqdef \min_{\text{$<$ order on $G$}} \ \sup_{x \in G} \ \sqn(M_x^<) \]
with~$M_x^<$ the matrix of~$x$ acting on~$(G,<)$.
Groups with uniform queue number~1 are exactly right-orderable groups.
As in \cref{lem:group-limit}, direct limits preserve uniform queue number.

\subsection{Group constructions}
Cartesian products---and more generally group extensions---is where the first difference with twin-width appears.
Twin-width on permutation matrices is remarkable in that it is exactly preserved by substitution,
in the sense that the twin-width of the resulting matrix is
the maximum of the twin-width of the input matrices (see~\cref{lem:tww-substitution}).
This fails for queue number: if~$M,N$ are permutation matrices
and~$M[N]$ is obtained by substituting~$N$ into every 1-coefficient of~$M$,
then
\begin{equation}
  \sqn(M[N]) = \sqn(M) \sqn(N).
  \label{eq:substitution-queue-number}
\end{equation}
Thus, while Cartesian products (or group extensions) still preserve having finite uniform queue number,
they do not a priori preserve the exact bound.
Precisely, the lexicographic ordering used in~\cref{lem:extension-gen}
gives the bound $\uqn(G \times H) \le \uqn(G) \uqn(H)$ by \eqref{eq:substitution-queue-number}, 
and this bound is tight for this specific choice of order.
However, there might be a better choice of order on~$G \times H$
giving $\uqn(G \times H) = \max(\uqn(G),\uqn(H))$ in some cases.

For example, all finitely generated abelian groups have finite uniform queue number,
but it is unclear whether there is a universal bound on the latter.
In turn, infinitely generated abelian groups (and thus solvable groups) might have infinite uniform queue number.

Summarizing, proofs of finite twin-width from this paper also give finite queue number in the following cases.
\begin{enumerate}
  \item Hyperbolic groups have finite queue number.
  \item Right-orderable groups have uniform queue number 1, and this is a characterisation.
  \item Finitely generated abelian groups have finite uniform queue number,
    but a priori without a universal bound.
  \item Polycyclic groups, which are defined like solvable groups except
    requiring that the iterated quotients be finitely generated and abelian,
    have finite uniform queue number.
  \item Groups of polynomial growth have finite uniform queue number,
     since they are virtually nilpotent, and nilpotent groups are polycyclic.
\end{enumerate}
On the other hand, we conjecture that arbitrary abelian groups, and a fortiori solvable groups,
can have infinite queue number (uniform and non-uniform).
As a simple example, we do not know whether an infinite countable product of $\Zz / 2\Zz$ with itself has finite queue number.

We also conjecture that the group of tree automorphisms
considered in \cref{prop:tww-tree-automorphism} has infinite queue number.
This group is related to the Bilu-Linial expanders~\cite{Bilu06},
which are known to have twin-width at most~6~\cite[Section~5]{twin-width2},
and are conjectured to have unbounded queue number.

Indeed, these expanders can be described as follows.
Let~$G_n$ be the group of permutations on~$2^n$ elements obtained by
restricting automorphisms of the infinite binary tree~$T$ to nodes at depth~$n$ in~$T$, ordered left to right.
Equivalently, permutations in~$G_n$ are the ones obtained from a permutation~$\sigma \in G_{n-1}$
by substituting each `1'-coefficient of~$\sigma$ by either the diagonal or antidiagonal 2-by-2 matrix,
the choice being done independently for all `1'-coefficients.
Then Bilu-Linial expanders have a partition of vertices into $V_1,V_2,V_3,V_4$ of size~$2^n$ each,
and a choice of orders~$<_i$ on~$V_i$ such that the adjacency matrix of $(V_i,<_i)$ versus $(V_j,<_j)$
is the matrix of a permutation in~$G_n$.
Precisely, a random choice of permutations in~$G_n$ in this description yields an expander with high probability,
while Bilu and Linial give a derandomized construction.

From this description, it is simple to see that if the action of~$\Aut(T)$ on~$T$ has finite uniform queue number,
then Bilu-Linial expanders have bounded queue number.
The converse, and the relation with the queue number of~$\Aut(T)$ are not clear,
but it is likely that an answer to either question would give significant insight for the other.

\bibliographystyle{plain}
\bibliography{biblio}
\end{document}